\DeclareMathOperator{\Hom}{Hom}
\begin{document}

\newcommand{\Z}{\mathbb{Z}}
\newcommand{\So}{{\mathcal{S} }}
\newcommand{\M}{\mathcal{M}}
\newcommand{\N}{\mathcal{N}}
\newcommand{\D}{\mathcal{D}}
\newcommand{\F}{\mathcal{F}}
\newcommand{\ca}{{\mathcal{C} }}
\newcommand{\id}{\mbox{\rm id\,}}
\newcommand{\Aut}{\mbox{\rm Aut\,}}
\newcommand\vect{\operatorname{Vec}}
\newcommand\Inv{\operatorname{Inv}}
\newcommand\Rep{\operatorname{Rep}}
\newcommand{\Sym}{\text{S}}
\newcommand{\A}{\mathcal{A}}
\newcommand{\Q}{\mathcal{Q}}
\def\Ext#1#2{{#1}_{\tiny{#2}}^{ext}}
\def\uuPic{\underline{\underline{\text{Pic}}}}
\def\uuG#1{\underline{\underline{#1}}}
\def\lsup#1#2{\tensor[^#1]{#2}{}}
\def\lsuprsub#1#2#3{\tensor[^#1]{#2}{_#3}}

\newcommand{\bcg}{\underline{\text{Aut}_{\otimes}^{br}(\mathcal{C})}}
\newcommand{\bbg}{\underline{\text{Aut}_{\otimes}^{br}(\mathcal{B})}}

\newcommand{\B}{\mathcal{B}} 
\newcommand{\C}{\mathcal{C}} 
\def\Aut#1{\text{Aut}_{\otimes}^{br}(#1)} 
\def\uAut#1{\underline{\text{Aut}_{\otimes}^{br}(#1)}}
\def\uuAut#1{\underline{\underline{\text{Aut}_{\otimes}^{br}(#1)}}}
\def\cross#1#2{{#1}_{#2}^{\times}}
\def\gauge#1#2#3{{#1}_{#2}^{\times,#3}}
\def\Pic#1{\text{Pic}(#1)}
\def\uPic#1{\underline{\text{Pic}(#1)}}
\def\uuPic#1{\underline{\underline{\text{Pic}(#1)}}}
\def\u#1{\underline{#1}}
\def\uu#1{\underline{\underline{#1}}}

\def\SO{\text{SO}}
\def\SU{\text{SU}}
\def\unit{\mathbf{1}}
\def\extunit{\mathbf{1}}
\def\Fib{\text{Fib}}

\theoremstyle{definition}
\newtheorem{axiom}{Axiom}
\newtheorem{remark}{Remark}
\newtheorem{theorem}{Theorem}
\newtheorem{thm}{Theorem}
\newtheorem{Conjecture}{Conjecture}
\newtheorem{lem}{Lemma}
\newtheorem{example}{Example}
\newtheorem{cor}{Corollary}
\newtheorem{prop}{Proposition}
\newtheorem{rem}{Remark}
\newtheorem{definition}{Definition}
\newtheorem{measurement}{Measurement}
\newtheorem{ancilla}{Ancilla}

\numberwithin{equation}{section} \makeatletter
\renewenvironment{proof}[1][\proofname]{\par
    \pushQED{\qed}%
    \normalfont \topsep6\p@\@plus6\p@ \labelsep1em\relax
    \trivlist
    \item[\hskip\labelsep\indent
        \bfseries #1]\ignorespaces
}{%
    \popQED\endtrivlist\@endpefalse
} \makeatother
\renewcommand{\proofname}{Proof}

\newcounter{commentcounter}

\newcommand{\commentc}[1]
{\stepcounter{commentcounter}
  \textbf{Comment \arabic{commentcounter} (by Cesar)}: 
{\textcolor{blue}{#1}} }

\newcommand{\commentj}[1]
{\stepcounter{commentcounter}
  \textbf{Comment \arabic{commentcounter} (by Julia)}: 
{\textcolor{pink}{#1}} }

\newcommand{\commentz}[1]
{\stepcounter{commentcounter}
  \textbf{Comment \arabic{commentcounter} (by Zhenghan)}: 
{\textcolor{red}{#1}} }

\newcommand{\comments}[1]
{\stepcounter{commentcounter}
  \textbf{Comment \arabic{commentcounter} (by Shawn)}: 
{\textcolor{orange}{#1}} }


\title{On Gauging Symmetry of Modular Categories}

\author{Shawn X. Cui$^{1}$, C\'esar Galindo$^{2}$, Julia Yael Plavnik$^{3}$, and Zhenghan Wang$^{1,4}$}

\address{$^1$Department of Mathematics\\University of California\\Santa Barbara, CA 93106\\USA}
\email{xingshan@math.ucsb.edu, zhenghwa@math.ucsb.edu}
\address{$^2$Departamento de Matem\'aticas\\Universidad de los Andes\\Carrera 1 N. 18A -10, Bogot\'a\\
Colombia}
\email{cn.galindo1116@uniandes.edu.co}
\address{$^3$Department of Mathematics,
    Texas A\&M University,
    College Station, TX
    U.S.A.}
\email{julia@math.tamu.edu}
\address{$^4$Microsoft Research, Station Q\\ University of California\\ Santa Barbara, CA 93106\\USA}
\email{zhenghwa@microsoft.com}

\thanks{S.-X. C and Z.W. are partially supported by NSF DMS 1108736 and J.P. by CONICET, ANPCyT and Secyt-UNC. C. G. was partially supported by the FAPA funds from vicerrectoria de investigaciones de la Universidad de los Andes. This project began while J.P. was at Universidad de Buenos Aires, and the support of that institution is gratefully acknowledged. Part of this work was done during visits of C.G. to Microsoft Research Station Q and J.P. to University of California, Santa Barbara.}

\keywords{Modular category, Topological symmetry, Gauging}

\maketitle

\begin{abstract}
Topological order of a topological phase of matter in two spacial dimensions is encoded by a unitary modular (tensor) category (UMC). A group symmetry of the topological phase induces a group symmetry of its corresponding UMC. Gauging is a well-known theoretical tool to promote a global symmetry to a local gauge symmetry. We give a mathematical formulation of gauging in terms of higher category formalism. Roughly, given a UMC with a symmetry group $G$, gauging is a $2$-step process: first extend the UMC to a $G$-crossed braided fusion category and then take the equivariantization of the resulting category. Gauging can tell whether or not two enriched topological phases of matter are different, and also provides a way to construct new UMCs out of old ones. We derive a  formula for the $H^4$-obstruction, prove some properties of gauging, and carry out gauging for two concrete examples.
\end{abstract}

\section{Introduction}\label{Introduction}

Topological phases of matter are quantum phases of matter represented by equivalence classes of gapped Hamiltonians.  In two spatial dimensions, the bulk topological order of a topological phase of matter $\mathcal{H}$ is encoded by a unitary modular (tensor) category (UMC) $\mathcal{B}$, also known as an anyon model \cite{ZWbook}.  Conventional symmetries of a topological phase $\mathcal{H}$ with topological order $\mathcal{B}$ induce topological symmetries of the UMC $\mathcal{B}$.  When a finite group $G$ acts on a topological phase $\mathcal{H}$ as topological symmetries, then gauging this global symmetry, when possible, leads to a topological phase transition from $\mathcal{H}$ to a new topological phase $\mathcal{H_{\text{gauged}}}$, whose topological order is encoded by a new UMC $\gauge{\B}{G}{G}$.  A physical theory of gauging based on $G$-crossed braided fusion category is developed in \cite{BBCW}. 

One reason for the interest in gauging comes from the study of symmetry enriched topological phases of matter (SETs).  Gauging can tell whether or not two SETs are different.
Another motivation is the classification of modular categories, which is interesting for both mathematics and condensed matter physics.  For ranks up to $5$, all modular categories are closely related to those that can be constructed from quantum groups \cite{BNRW}.  There are well-known constructions in conformal field theory that have analogues for modular category.  Gauging is another construction through which we can obtain new modular categories from old ones with group actions.  For example, all group-theoretical modular categories can be obtained from gauging a global symmetry of a pointed modular category by Prop. \ref{GTPMC} in Sec. 5.

Gauging is a well-known procedure in physics to promote a global symmetry to a local gauge symmetry.  While widely practiced in physics, gauging is hard to define mathematically.  In this paper, we formulate gauging with higher category formalism.  Our definition of gauging is the inverse of the so-called taking a core of a Tannakian subcategory in a modular category, which is called condensation of anyons in physics \cite{DGNO2}.  Our conceptual contribution is a formulation of gauging for two dimensional topological order modeled by an anyon model. Our definition is justified physically and leads to a study of the interplay of group symmetry and topological order based on three intertwined themes: symmetry fractionalization, defects, and gauging \cite{BBCW}.  Our technical new results include a formula for the notoriously hard to compute $H^4$-obstruction in Prop. \ref{H4formula} in Sec. 5, and a sequentially gauging lemma.  As an example, we carry out gauging for the first non-abelian symmetry $S_3$ of the UMC $SO(8)_1$ and obtain UMCs that have not appeared elsewhere.  In the earlier version, we obtained two UMCs that seemed to be two different UMCs with the same $T$-matrix. A referee pointed out that the two UMCs are equivalent.

The full symmetry of a set $X$ with $n$ identical elements is the permutation group $S_n$.  A group $G$ is a symmetry of $X$ if there exists a group homomorphism $\rho: G \rightarrow S_n$.  The full global symmetry group of a modular category $\mathcal{B}$ is the group $\Aut{\B}$ consisting of equivalence classes of braided tensor auto-equivalences of $\mathcal{B}$.  A group $G$ is a global symmetry of $\mathcal{B}$ if there exists a group homomorphism $\rho: G\rightarrow \Aut{\B}$.  Given a global symmetry of $\mathcal{B}$ for a finite group $G$, then symmetry defects can be introduced into the topological phase of matter.  Symmetry defects are modelled mathematically by simple objects in invertible module categories over $\mathcal{B}$.  The fundamental isomorphism $\Theta_{\mathcal{B}}: \uPic{\B}\rightarrow \uAut{\B}$ establishes a one-one correspondence between symmetries and defects \cite[Theorem 5.2]{ENO3}, where $\uPic{\B}$ is the $1$-truncation of the Picard $3$-group $\uuPic{\B}$ of invertible module categories.  A relation between defects and symmetries was studied earlier in \cite{FFRS,FPSV}.

Given a global symmetry $\rho: G \rightarrow \Aut{\B}$ for a finite group $G$, the first step in gauging $\rho$ is to add defects to $\mathcal{B}$ in a consistent way.  But there is an obstruction for introducing defects so that they form a fusion category.  The first  obstruction $O_3(\rho)$ is a cohomology class in $H^3(G;\text{Inv}(\mathcal{B}))$, where $\text{Inv}(\mathcal{B})$ is the group of invertible objects of $\mathcal{B}$.  In higher category formalism, this $H^3$-obstruction is the same as the obstruction to lifting $\rho$ to a categorical group homomorphism $\u{\rho}:\u{G}\rightarrow \uAut{\B}$.  When $O_3(\rho)$ vanishes, then we can define consistent fusion rules for defects, but the fusion rules are not unique.  The possible fusion rules are parameterized by cohomology classes $\alpha \in H^2(G;\text{Inv}(\mathcal{B}))$. For a given fusion rule specified by $(\rho, \alpha)$, the tensor product might not be associative, which leads to a secondary obstruction $O_4(\rho,\alpha)\in H^4(G;U(1))$.  This $H^4$-obstruction is the same as the obstruction to lifting the $2$-homomorphism $\u{\rho}:\u{G}\rightarrow \uPic{\B}$ to a tri-homomorphism $\uu{\rho}:\uu{G}\rightarrow \uuPic{\B}$ when $\uAut{\B}$ is identified with $\uPic{\B}$ by $\Theta_{\mathcal{B}}$.  When the $O_4(\rho,\alpha)$ obstruction vanishes, then we have consistent fusion categories for the defects, which are parameterized by cohomology classes $\beta\in H^3(G;U(1))$.  It follows that the resulting fusion category $\cross{\B}{G}$ is $G$-crossed braided with a categorical action of $G$.  The second step in gauging is to equivariantize the $G$-crossed braided fusion category $\cross{\B}{G}$.  There are no additional obstructions and the resulting modular category is called the gauged theory,  denoted as $\gauge{\B}{G}{G}$.  Most mathematical results needed for the above discussion are contained in \cite{ENO3}.

For both applications to physics and topology, it is important to compute the $H^4$-obstruction.  A formula for the obstruction $O_4(\rho,\alpha)$ in the quasi-trivial case is known \cite{Ga2}.  We reduce the computation of $O_4(\rho,\alpha)$ for the $G$-crossed extension $\cross{\B}{G}$ of $\B$ to the case of quasi-trivial extension of $\cross{\mathcal{Z}(\B)}{G}$.  The formula for the $H^4$-obstruction $\tilde{O}_4(\rho,\alpha)$ for the quasi-trivial $\cross{\mathcal{Z}(\B)}{G}$ case can be written in terms of the data of $\cross{\B}{G}$, and then our formula for $O_4(\rho,\alpha)$ in Prop. \ref{H4formula} follows.

The paper is organized as follows.  Sec. 2 contains preliminaries.  In Sec. 3, we define gauging and collect some general properties of gauging.  Sec. 4 is on a sequentially gauging lemma.  In Sec. 5, we derive formulas for both obstructions.  Finally, Sec. 6 contains two explicit examples.



\section{Preliminaries}

In this section we recall some basic definitions and standard notions. Much of the material here can be found in \cite{DGNO2} and \cite{GHR}.  All our fusion categories are over the complex numbers $\mathbb{C}$. We will use the following notation in the paper.

$\C$, $\mathcal{D}$: fusion categories.

$\B$: a braided fusion category.

$\Aut{\B}$, $\uAut{\B}$: the 1-, 2-group of braided tensor auto-equivalences of $\B$. 

$\Pic{\B}, \uPic{\B}, \uuPic{\B}$: the 1-, 2-, 3-group of invertible module categories of $\B$.

$\cross{\B}{G}$: a $G$-crossed braided extension of $\B$.

$\cross{\B}{(\rho,\alpha,\beta)}$: a $G$-crossed modular extension of $\B$.

$\gauge{\B}{G}{G}, \gauge{\B}{(\rho,\alpha,\beta)}{G}$: the gauged $\B$.

$\u{\rho}$: $2$-group homomorphism.

$\uu{\rho}$: $3$-group homomorphism.

\subsection{Unitary fusion categories}\label{subsection unitary fusion}

A  \textbf{$C^*$-category} $\D$ is a $\mathbb C$-linear abelian
category with an involutive antilinear contravariant endofunctor $\dagger$,
which is the identity on objects. The hom-spaces $\Hom_\D(X,Y)$ are
Hilbert spaces with norms satisfying $$|| f g||\leq ||f||\ ||g||, \
\ ||f^\dagger f||=||f||^2,$$ for all $f \in \Hom_\D(X, Y), g \in\Hom_\D(Y,Z
)$, where  $f^\dagger$ denotes the image of $f$ under the endofunctor $\dagger$. A $C^*$-category is called locally finite dimensional if each hom-space $\Hom_\D(X,Y)$ is a finite dimensional Hilbert space. In this paper all $C^*$-categories will be locally finite dimensional.
\begin{rem}
A  $C^*$-structure over a locally finite dimensional complex abelian category $\D$ is the same as a positive complex $*$-structure, that is, an involutive antilinear contravariant endofunctor $\dagger$,
which is the identity on objects and such that for each $f \in \operatorname{Hom}_{\D}(X, Y )$, $f
^\dagger f = 0$ implies $f = 0$, see \cite[Proposition 2.1]{Mueg}. 
\end{rem}

Let $X$ and $Y$ be objects in a $C^*$-category. A morphism $u:X\to
Y$ is unitary if $uu^\dagger=\id_Y$ and  $u^\dagger u=\id_X$. A functor $F:\D\to \D'$ is called unitary is preserves the $*$-structure, that is, if $F(f^\dagger)= F(f)^\dagger$, for all $f\in \Hom_\D(X, Y)$. 

A \textbf{unitary fusion category} (UFC) is a fusion category $\C$, which
is a  $C^*$-category with all constraints unitary, and
$(f\otimes g)^\dagger= f^\dagger\otimes g^\dagger$ for every pair of morphisms $f,g$
in $\C$.

\subsection{Unitary braided fusion category and the center construction}

If $\C$ is a UFC, the center $\mathcal{Z}(\ca)$ is a \textit{unitary} braided fusion category (UBFC) and for all $(X,c_{-,X})\in \mathcal{Z}(\C)$, the natural isomorphisms $c_{W,X}:W\otimes X\to X\otimes W$ are unitary, for all $W\in \C$, \cite[Theorem 6.4]{Mueg2}. In particular, for every UFC $\C$, every braiding structure on $\C$ is unitary \cite[Theorem 3.2]{G}.

Since a UFC is always spherical, it follows that a UBFC is a unitary premodular category, in the sence of \cite{ENO}.

\subsection{Unitary modular categories and the symmetric center}

Two objects $X, Y\in \C$ in a braided fusion category centralize each other if $$c_{Y,X}c_{X,Y}=\id_{X\otimes Y}.$$
The symmetric center $\C'$ (or M\"{u}ger's center) is the full subcategory of $\C$ consisiting of objects that centralize each object of $\C$. The symmetric center is a symmetric fusion category. 

By \cite[Theorem 3.7]{DGNO2}, a UBFC is a unitary modular category (UMC) if and only if $\C'=\text{Hilb}$ (the category of finite dimensional Hilbert spaces).

\subsection{Unitary categorical actions and their equivariantizations}\label{categorical actions}

Let $\C$ be a UFC. We will denote by $\underline{\text{Aut}(\M)}$ (respectively, $\underline{\text{Aut}_\otimes(\C)}$) the monoidal category where objects are  unitary auto-equivalences of $\M$ (respectively, unitary tensor auto-equivalence of $\C$), arrows are  unitary natural isomorphisms (respectively, unitary tensor natural isomorphisms) and the tensor product is the composition of functors. 

A unitary  action of the group  $G$ on  $\C$ is a monoidal functor   $\u\rho:\underline{G}\to
\underline{\text{Aut}_\otimes(\C)}$.

Let $G$ be a group acting unitarily on $\C$ via $\u\rho:\underline{G}\to \underline{\text{Aut}_\otimes(\C)},$ then we have the following data \begin{itemize}
  \item unitary tensor functors $(\u\rho(g),\psi(g)): \C\to \C$ for each $g\in G$,
  \item unitary natural isomorphism $\phi(g,h): \u\rho(gh) \to \u\rho(g)\circ \u\rho(h)$ for all $g, h \in G$.
\end{itemize} 
The $G$-equivariantization (or category of $G$-invariant objects) of $\C$, denoted by $\C^G$, is a UFC defined as follows. An object in $\C^G$ is a pair $(V, f)$, where $V$ is an object of $\mathcal C$ and $f$ is a family of unitary isomorphisms $f_g: \u\rho(g)(V) \to V$, $g \in G$ such that for all $g, h \in G$, 
\begin{equation}\label{deltau} 
\phi(g,h)f_{gh}= f_g \circ \u\rho(g)(f_h).
\end{equation} A $G$-equivariant morphism $\phi: (V, f) \to (V', f')$ between $G$-equivariant objects $(V, f)$ and $(V', f')$ is a morphism $u: V \to V'$ in $\mathcal C$ such that $f'_g\circ \u\rho(g)(u) = u\circ f_g$, for all $g \in G$. The $C^*$-structure of $\C^G$ is the one inherited from $\C$. The tensor product is defined by \begin{align*}
    (V, f)\otimes (V', f'):= (V\otimes V', l),
\end{align*}where $$l_g= f_g f'_g\psi(g)_{V,V'}^*,$$and unit
object $(1, \text{id}_1)$.


\section{Gauging a Global Symmetry}\label{gauging}

Gauging is an important theoretical tool in physics.  As an application to physics, we are interested in a mathematical formulation of gauging for symmetries of two dimensional topological phases of matter.  Mathematically, we consider gauging as a construction of new modular categories from old ones with group symmetry.  

For application to physics in our situation, all the discussion should be within the unitary setting.  However for the mathematical application and physics elsewhere, non-unitary is interesting too.  We will formulate the theory in the unitary setting, though most of the theory can be repeated in the non-unitary setting.  Throughout the paper, we need to use the basic notions in the unitary setting such as unitary Picard groups and the tensor product of unitary bi-module categories, which are defined in \cite{GHR}. In order to keep the notation simple, we continue to use the standard notation.

\subsection{Global symmetry of unitary modular categories}

A quantum system is modelled by a pair $(L,H)$, where $L$ is the (local) Hilbert space of states (or wave functions) and $H$ is the Hamiltonian---an Hermitian operator on $L$.  While we will not define the notion mathematically, we will refer to a class of gapped Hamiltonians without phase transitions among them as a topological phase of matter.  Elementary excitations in a two dimensional topological phase of matter form an anyon system, which is modelled by a UMC.  Therefore, we will say that the topological order of a two dimensional topological phase of matter is a UMC.  

A group $G$ is a symmetry of a quantum system $(L,H)$ if $G$ acts on $L$ unitarily and the action commutes with $H$, i.e., there is a group homomorphism $\rho: G\rightarrow \mathbb{U}(L)$ such that $\rho(g)H=H\rho(g)$ for all $g\in G$, where $\mathbb{U}(L)$ are the unitary operators of $L$.  When the quantum system $(L,H)$ represents a topological phase of matter whose topological order is given by a UMC $\mathcal{B}$, then the symmetry $(G,\rho)$ of $(L,H)$ induces a global symmetry of the UMC $\B$. Let $\Aut{\B}$ be the $1$-truncation of $\uAut{\B}$, i.e., the group of equivalence classes of braided tensor auto-equivalences of $\B$.

\begin{definition}

Given a group $G$ and a UMC $\B$, a global symmetry of $\B$ is a pair $(G,\rho)$, where $\rho: G\rightarrow \Aut{\B}$ is a group homomorphism.

\end{definition}

Given a UMC $\B$, it is in general difficult to compute $\Aut{\B}$ and $\uAut{\B}$.  One way to obtain interesting symmetries is to consider the $n$-fold Deligne product ${\B}^{\boxtimes n}$ of a UMC $\B$.  Then any subgroup $G$ of the permutation group $S_n$ is a global symmetry of ${\B}^{\boxtimes n}$.  Such obvious symmetries can also be combined with symmetries of $\B$.  For example, the full global symmetry group of $SO(16)_1\boxtimes SO(16)_1=SO(8)_1\boxtimes SO(8)_1$  contains at least $S_3\times S_3, \Z_2$.

\subsection{Symmetry defects}

While symmetries are intrinsic properties of a topological phase of matter, defects are extrinsic objects that are introduced to the topological phase of matter by modifying the Hamiltonian \cite{BBCW}.  For a topological phase of matter with topological order $\B$, we will model defects by simple objects in indecomposable module categories over $\B$.  We will refer to an indecomposable module category over $\B$ as a defect sector and if it is indexed by a group element $g$, we will refer to it as a flux sector with flux $g$.  Simple objects in a defect sector will be called defects.

Given a UFC $\C$, a left module category $\M$ over $\C$ is a $C^*$-category which is a categorical left representation of $\C$ compatible with the $C^*$-structure.  Similarly, we can define right module category and bi-module category over $\C$.  The tensor product $\boxtimes_{\C}$ of $\C$-bimodule categories was defined in \cite{ENO3}, see \cite{GHR} for definition of tensor product in the unitary setting. With this tensor product, a $(\C,\D)$-bimodule category $\M$ is called invertible if there is a $(\D,\C)$-bimodule $\N$ such that  $\M\boxtimes_{\D}\N \cong \C$ and $\N \boxtimes_{\C}\M \cong \D$ as bimodule categories.  The Brauer-Picard group $\text{BrPic}(\C)$ of $\C$ is the group of equivalence classes of invertible $\C$-bimodule categories. This group plays a key role in the classification of extensions of tensor categories by finite groups \cite[Theorem 1.3]{ENO3}.  The natural structure for invertible bi-module categories over a fusion category $\C$ is the $3$-group $\uu{\text{BrPic}(\C)}$, whose $1$-truncation is the $2$-group $\uu{\text{BrPic}(\C)}$. The Brauer-Picard group $\text{BrPic}(\C)$ of $\C$ is the $2$-truncation of $\uu{\text{BrPic}(\C)}$.

Note that for a braided fusion category $\B$, a left action induces a compatible right action via the braiding. In particular, all left $\B$-modules have a canonical $\B$-bimodule structure. It follows that in the braided case, there is a distinguished $3$-subgroup $\uu{\Pic\B} \subseteq \uu{\text{BrPic}(\B)}$ of the Brauer-Picard $3$-group, the so-called Picard $3$-group $\uu{\Pic\B}$ of $\B$ that consists of all invertible (left) $\B$-modules.  

\begin{definition}

Given a UMC $\B$, a symmetry defect of $\B$ is a simple object in an invertible module category over $\B$.

\end{definition}

Let $\uAut{\B}$ be the 2-group of braided unitary tensor auto-equivalences of a UBFC $\B$. There is a monoidal functor $\Theta:\u{\Pic\B}\to \uAut{\B}$ associated to the \emph{alpha-induction functors} $\alpha_{+}$ and $\alpha_{-}$, see \cite{ostrik1, ENO3} for precise definitions. When $\B$ is a UMC, there also exists a monoidal functor $\Phi: \uAut{\B}\to \u{\Pic\B}$ such that the functors $\Theta$ and $\Phi$ are mutually inverse equivalences of $\uAut{\B}$ and $\u{\Pic\B}$ as $2$-groups \cite[Theorem 5.2]{ENO3}.

\begin{definition}
A unitary (faithfully) $G$-crossed braided fusion category $\cross{\B}{G}$ is a unitary fusion category $\cross{\B}{G}$ equipped with the following structures:

\begin{itemize}
\item a unitary action of $G$ on $\cross{\B}{G}$;
\item a faithful $G$-grading $\cross{\B}{G}=\bigoplus_{\sigma\in G}\B_\sigma$;
\item unitary natural isomorphisms $$c_{X,Y}:X\otimes Y\to g(Y)\otimes X, \  \  g\in G, X\in \B_g,Y\in \cross{\B}{G},$$
this unitary morphisms are called the $G$-braiding.

This data should satisfy the following conditions:
\begin{itemize}
\item $g(\B_h)=\B_{ghg^{-1}}$, for all $g,h\in G$,
\item $g(c_{X,Y})=c_{g(X),g(Y)}$, for all $g\in G$,
\item and some commuting diagrams that guarantee the naturality of $c$, the consistency of $c$ with the tensor product, etc. See \cite[Definition 4.41]{DGNO2}.
\end{itemize}
\end{itemize}
\end{definition}

A unitary $G$-crossed braided fusion category $\cross{\B}{G}$  has an extended $S$-matrix (see \cite[Section 9]{Kiri}).  We say a unitary $G$-crossed braided fusion category is modular if its extended modular $S$-matrix is non-singular.  Verlinde formulas and modular representations can be generalized \cite{BBCW,Kiri}.

\begin{prop}\cite{DGNO2}

A unitary $G$-crossed braided extension $\cross{\B}{G}$ of a UMC $\B$ is modular if and only if $\B$ is modular.

\end{prop}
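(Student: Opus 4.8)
Write $\mathcal{C}=\cross{\B}{G}$ and identify $\B$ with the neutral component $\mathcal{C}_e$ of the faithful grading $\mathcal{C}=\bigoplus_{\sigma\in G}\B_\sigma$. On $\mathcal{C}_e$ the $G$-braiding restricts to an ordinary (unitary) braiding, so $\B\subseteq\mathcal{C}$ is a braided fusion subcategory; moreover the $G$-action preserves the grading, since $g(\B_e)=\B_{geg^{-1}}=\B_e$, as well as the braiding, hence it restricts to an action of $G$ on $\B$ by braided autoequivalences and in particular preserves the symmetric center $\B'$. The plan is to establish the equivalence (for an arbitrary braided $\B=\mathcal{C}_e$, which subsumes the stated Proposition) by factoring it through the gauged category $\gauge{\B}{G}{G}=\mathcal{C}^G$, via two ingredients: (i) the extended $S$-matrix of $\mathcal{C}$ is non-singular if and only if $\mathcal{C}^G$ is modular; and (ii) $\mathcal{C}^G$ is modular if and only if $\B=\mathcal{C}_e$ is modular.

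For (i) I would appeal to the equivariantization/orbifold correspondence of \cite[Section 9]{Kiri}: the simple objects of $\mathcal{C}^G$ are indexed by pairs $(\mathcal{O},\pi)$ with $\mathcal{O}$ a $G$-orbit of simple objects of $\mathcal{C}$ and $\pi$ an irreducible, possibly projective, representation of the associated stabilizer, and the $S$- and $T$-matrices of $\mathcal{C}^G$ are given by explicit character-theoretic formulas in terms of the extended $S$- and $T$-matrices of $\mathcal{C}$. A character-orthogonality argument on the stabilizer groups then shows that non-singularity of $S^{\mathcal{C}^G}$ is equivalent to non-singularity of the extended $S$-matrix of $\mathcal{C}$; the identity $\mathrm{FPdim}(\mathcal{C}^G)=|G|\,\mathrm{FPdim}(\mathcal{C})=|G|^2\,\mathrm{FPdim}(\B)$ supplies the matching count of labels.

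For (ii), the ``if'' direction is Müger's orbifold theorem: the equivariantization of a braided $G$-crossed extension of a modular category is modular (\cite{DGNO2}; see also \cite{Kiri}). For the ``only if'' direction, suppose $\B$ is not modular, so $\B'\neq\mathrm{Hilb}$, and pick a nonzero simple $X\in\B'$. Since $\B'$ is $G$-stable, $X$ lies in a finite $G$-orbit; equipping the direct sum over that orbit with a (generally projective) equivariant structure yields a nonzero object $\widehat{X}\in\mathcal{C}^G$ supported inside $\B\subseteq\mathcal{C}$. One then checks $\widehat{X}\in(\mathcal{C}^G)'$: the $\mathcal{C}^G$-braiding of $\widehat{X}$ with any $(W,h)$, $W\in\B_g$, is assembled from the $G$-crossed braidings of $X$ with $W$ and the equivariant structure $h$, and its square collapses --- against $\B_e$ because $X$ centralizes all of $\B$, and against $\B_g$ with $g\neq e$ because naturality and $G$-equivariance of the $G$-braiding force the two $G$-crossed braidings with $X$ to be mutually inverse up to the equivariant identifications. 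Hence $(\mathcal{C}^G)'\neq\mathrm{Hilb}$, so $\mathcal{C}^G$ is not modular. Chaining (i) and (ii) gives the asserted equivalence, and the unitary refinement is automatic: $\mathcal{C}^G$ is a UFC by the equivariantization construction of Section~\ref{categorical actions}, every braiding on a UFC is unitary, and a non-singular extended $S$-matrix of a unitary $G$-crossed category is a scalar multiple of a unitary matrix.

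I expect the ``only if'' half of (ii) to be the main obstacle: a simple object of $\B'$ need not be $G$-fixed, which forces one to work with its full orbit and a genuinely projective equivariant structure, and one must still rule out the twisted sectors $\B_g$ ($g\neq e$) destroying centrality. I would carry out this bookkeeping directly on the extended $S$-matrix of $\mathcal{C}$, whose $(e,e)$-block equals, up to normalization and the folding induced by the $G$-action on the simple objects of $\B$, the ordinary $S$-matrix of $\B$; a null vector of $S^{\B}$ then yields, after averaging over $G$, a null vector of the full extended $S$-matrix, which is the cleanest way to see the obstruction propagate.
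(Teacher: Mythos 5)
The paper does not actually prove this statement---it is quoted from \cite{DGNO2} (it is essentially Proposition 4.56 there, combined with the Kirillov correspondence between crossed modularity and modularity of the equivariantization, which the paper also states separately as a cited proposition). So your proposal is measured against the standard proof rather than one in the text. Your overall strategy---factor the equivalence through $\gauge{\B}{G}{G}$ via (i) crossed-modularity of $\cross{\B}{G}$ $\Leftrightarrow$ modularity of $(\cross{\B}{G})^G$ and (ii) modularity of $(\cross{\B}{G})^G$ $\Leftrightarrow$ modularity of $\B_e$---is coherent and matches how the literature organizes these facts.

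The genuine gap is in your ``only if'' direction of (ii). You claim that for a simple $X\in\B'$ the induced object $\widehat{X}$ lies in $\bigl((\cross{\B}{G})^G\bigr)'$ because ``naturality and $G$-equivariance of the $G$-braiding force the two $G$-crossed braidings with $X$ to be mutually inverse up to the equivariant identifications.'' This is false as stated. For $Y\in\B_g$ with $g\neq e$ the double $G$-crossed braiding of $X$ with $Y$ is an isomorphism $X\otimes Y\to g(X)\otimes Y$, and whether it becomes the identity after composing with $f_g\otimes\operatorname{id}$ depends on the choice of equivariant structure $f$; nothing forces an arbitrary (or the canonical induced) structure to compensate it. Concretely, take the Ising category as a $\Z_2$-crossed extension of its neutral component $\{\mathbf{1},\psi\}\cong\operatorname{sVec}$ with trivial action: $\psi$ is transparent in $\operatorname{sVec}$, but the monodromy of $\psi$ with $\sigma$ is $-1$, so the equivariant object $(\psi,\mathrm{triv})$ does \emph{not} centralize $(\sigma,\pm)$ in the equivariantization; only the twisted lift $(\psi,\mathrm{sgn})$ does. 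Thus the existence of a compensating (possibly projective) equivariant structure on the orbit of $X$ is precisely the nontrivial content of this direction, and your sketch assumes it rather than proving it. The same issue undermines your fallback in the last paragraph: the row of the extended $S$-matrix indexed by a transparent $X\in\B'$ is \emph{not} proportional to the unit row on the twisted sectors (again $S_{\psi,\sigma}=-d_\sigma\neq d_\psi d_\sigma$ in Ising), so a null vector of $S^{\B}$ does not extend by zero, even after averaging over $G$. The standard argument avoids this construction entirely: one uses that $\Rep(G)\subseteq(\cross{\B}{G})^G$ has centralizer exactly $\B_e^G$ (faithfulness of the grading is used here), applies M\"uger's double-centralizer and dimension theorems to identify the M\"uger center of $\B_e^G$ with $\Rep(G)$ when the equivariantization is non-degenerate, and then de-equivariantizes to conclude $\B_e'=\operatorname{Vec}$. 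I would either adopt that route or supply a genuine proof that the needed equivariant structure on the orbit of $X$ exists.
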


A $G$-crossed braided fusion category $\cross{\B}{G}$ decomposes as a direct sum $\cross{\B}{G} =\bigoplus_{g\in G} \B_g$, where each component is an abelian full subcategory of $\cross{\B}{G}$ and the tensor product maps $\B_g\times \B_h$ to $\B_{gh}$, i.e. $\cross{\B}{G}$ is a $G$-graded category, equipped with an action of $G$ compatible with the grading and a $G$-braiding. Notice that the trivial component $\B_e$ of the grading is a braided fusion category, each component $\B_g$ is an \emph{invertible} $\B_e$-module category and the functors $M_{g,h}:\B_g\boxtimes_{\B_e}\B_h\to \B_{gh}$ induced from the tensor product by restriction are $\B_e$-module equivalences, by \cite[Theorem 6.1]{ENO3}. 

A $G$-crossed braided fusion category $\cross{\B}{G} =\bigoplus_{g\in G} \B_g$ determines and is determined by the following data:

\begin{itemize}
  \item a BFC  $\B_e=\B$, a collection of invertible $\B$-module categories $\B_g, g\in G$,
  \item a collection of $\B$-module equivalences $M_{g,h}:\B_g\boxtimes_{\B_e} \B_h\to \B_{gh}$,
  \item natural isomorphisms of $\B$-module functors \[\alpha_{g,h,k}:M_{g,h k}(\text{Id}_{\B_g}\boxtimes_{\B_e} M_{h,k})\to M_{gh,k}(M_{g,h}\boxtimes_{\B_e} \text{Id}_{\B_k})\] satisfying certain identities.
\end{itemize}

We are interested in the opposite direction: when a given collection of defect sectors in $\uuPic{\B}$ would form a $G$-crossed braided extension of the UMC $\B$? It follows from \cite[Theorem 8.4, 8.8]{ENO3} that a (faithfully graded) $G$-crossed braided fusion extension of $\B$ exists if and only if a certain tensor product obstruction class in $H^3(G,\text{Inv}(\B))$ and a secondary associativity constraint obstruction class in $H^4(G,U(1))$ vanish. 

\subsection{Definition of gauging}

Given a global symmetry $(G,\rho)$ of a quantum system $(L,H)$, gauging in physics is to couple gauge fields to the Hamiltonian $H$ to promote the global symmetry $G$ to a local gauge symmetry.  There is neither a straightforward nor unique way to gauge.  The common practice in the Hamiltonian formalism is to choose the so-called minimal coupling by replacing ordinary derivatives with covariant derivatives.  The first step in gauging is to add flux sectors of defects into the theory.  For a topological order $\B$, we need to add defects to $\B$ to form a $G$-crossed modular extension $\cross{\B}{G}$ of $\B$. Such defects are in general confined, so in the second step we equivariantize the $G$-crossed extension $\cross{\B}{G}$, which leads to a new topological order  $\gauge{\B}{G}{G}$.  The first step has obstructions and ambiguities, while the second step has no further obstructions and is unique.  

The first step of adding defects consistently amounts to a lifting of the global symmetry $\rho: G\rightarrow \Aut{\B}$ to a $2$-homomorphism $\u{\rho}: \u{G}\rightarrow \u{\Aut{\B}}$ in the first stage and then a further lifting of $\u{\rho}: \u{G}\rightarrow \u{\Aut{\B}}$ to a tri-homomorphism $\uu{\rho}: \uu{G}\rightarrow \uuPic{\B}$ in the second stage when $\u{\Aut{\B}}$ and $\uPic{\B}$ identified by $\Theta_{\B}$.  This motivates the following definition:

\begin{definition}
Given a global symmetry  $\rho: G\rightarrow \Aut{\B}\cong \Pic{\B}$, $(G, \rho)$ can be gauged if there exists a tri-homomorphism $\uu{\rho}: \uu{G}\rightarrow \uu{\Pic \B}$ such that ${\rho}$ is equivalent to $\overline{\overline{\uu{\rho}}}$, where $\overline{\overline{(-)}}:\uu{\Pic \B}\to {\Pic \B}$ is the $2$-truncation map.
\end{definition}

The following theorem, \cite[Theorem 7.12]{ENO3}, will be used throughout the paper.
\begin{theorem}\cite{ENO3}\label{ENO3712}
$G$-crossed braided extensions $\cross{\B}{G}$ of $\B$ having a faithful $G$-grading are in bijection with tri-homomorphisms $\uu{\rho}: \uu{G}\rightarrow \uu{\Pic \B}$, or equivalently with homotopy classes of maps between their classifying spaces $ \text{BG} \to \text{B}\uuPic{B}$ 
\end{theorem}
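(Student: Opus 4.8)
The plan is to assemble the statement from two ingredients. The first is a dictionary, essentially the one recorded just before the theorem: a faithfully $G$-graded $G$-crossed braided extension of $\B$ is the same datum as a tuple $\bigl(\B,\{\B_g\}_{g\in G},\{M_{g,h}\},\{\alpha_{g,h,k}\}\bigr)$ consisting of invertible $\B$-module categories $\B_g$, module equivalences $M_{g,h}\colon\B_g\boxtimes_{\B}\B_h\to\B_{gh}$, and natural isomorphisms $\alpha_{g,h,k}$ of module functors subject to a pentagon identity. The second is the standard correspondence between $3$-groups and their classifying spaces, which converts such a tuple into a homotopy class of maps of spaces.

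First I would make the dictionary precise and, crucially, explain why the Picard $3$-group $\uuPic{\B}$ appears, rather than the Brauer-Picard $3$-group $\uu{\text{BrPic}(\B)}$ that governs arbitrary $G$-graded extensions via \cite[Theorem 1.3]{ENO3}. The point is that for a \emph{braided} $\B$ every left $\B$-module category carries a canonical $\B$-bimodule structure, and one checks that, once the $G$-grading on $\cross{\B}{G}$ is fixed, providing the $G$-crossed braiding $c_{X,Y}\colon X\otimes Y\to g(Y)\otimes X$ is equivalent to requiring the multiplication functors $M_{g,h}$ and the associators $\alpha_{g,h,k}$ to be morphisms of \emph{left} $\B$-modules compatibly with these canonical bimodule structures; in effect the hexagon-type coherences for $c$ are traded, term by term, against the left-module-functor axioms for the $M_{g,h}$ and $\alpha_{g,h,k}$, while the grading/action compatibility $g(\B_h)=\B_{ghg^{-1}}$ is forced by invertibility. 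Equivalently, one may run this through \cite[Theorem 1.3]{ENO3}: a $G$-extension of $\B$ admits such a $G$-crossed braiding if and only if its classifying tri-homomorphism $\uu G\to\uu{\text{BrPic}(\B)}$ factors through the sub-$3$-group $\uuPic{\B}$, and the $G$-crossed braidings are in bijection with such factorizations.

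With the dictionary in hand, I would observe that the tuple $\bigl(\B,\{\B_g\},\{M_{g,h}\},\{\alpha_{g,h,k}\}\bigr)$ is exactly a tri-homomorphism $\uu{\rho}\colon\uu G\to\uuPic{\B}$ out of the $3$-group $\uu G$ (the group $G$ viewed as a monoidal $2$-category with only identity $1$- and $2$-cells): the underlying assignment is $g\mapsto\B_g$, the $1$-cell (multiplicator) part of the monoidal $2$-functor is the family $\{M_{g,h}\}$, its $2$-cell (multiplicator-associativity) part is $\{\alpha_{g,h,k}\}$, and the sole coherence axiom for a tri-homomorphism is precisely the pentagon the $\alpha$'s satisfy (together with unit normalizations). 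Comparing equivalence of extensions with equivalence of tri-homomorphisms, this assignment descends to a bijection on equivalence classes: an extension yields $\uu{\rho}$ up to equivalence; a tri-homomorphism $\uu{\rho}$ reconstructs $\bigoplus_{g}\B_g$ with tensor product built from the $M_{g,h}$, associator $\alpha_{g,h,k}$, and $G$-crossed braiding read off from the canonical bimodule identifications; and the two constructions are mutually inverse up to equivalence. The background bookkeeping — that these tri-homomorphisms are the right objects, with the $H^3(G,\Inv(\B))$ tensor-product obstruction and the $H^4(G,U(1))$ associativity obstruction and their respective $H^2$ and $H^3$ ambiguities — is \cite[Theorems 8.4, 8.8]{ENO3}.

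Finally, for the reformulation via classifying spaces I would invoke the general principle that the classifying-space functor $\text{B}(-)$ gives a bijection between equivalence classes of tri-homomorphisms $\uu G\to\uuPic{\B}$ and homotopy classes of based maps $\text{BG}\to\text{B}\uuPic{\B}$. Since $\uuPic{\B}$ is a $3$-group, $\text{B}\uuPic{\B}$ is a connected space with homotopy concentrated in degrees $1,2,3$, namely $\pi_1=\Pic{\B}$, $\pi_2=\Inv(\B)$ and $\pi_3=U(1)$; climbing its Postnikov tower reproduces exactly a choice of $\rho\colon G\to\Pic{\B}$, then an $H^3(G,\Inv(\B))$ obstruction with an $H^2(G,\Inv(\B))$-torsor of resolutions, then an $H^4(G,U(1))$ obstruction with an $H^3(G,U(1))$-torsor of resolutions --- i.e.\ the very obstruction calculus used in the rest of the paper, which also sanity-checks the translation. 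The step I expect to be the real obstacle is the one in the second paragraph: verifying that transporting between left-module and bimodule structures by means of the braiding of $\B$ trades the $G$-crossed braiding axioms precisely for the left-$\B$-module coherences of the $M_{g,h}$ and $\alpha_{g,h,k}$. This is a careful but conceptually shallow diagram chase, and it is exactly what makes $\uuPic{\B}$, not $\uu{\text{BrPic}(\B)}$, the group in the statement.
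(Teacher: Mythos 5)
The paper does not prove this statement---it is imported verbatim as Theorem 7.12 of \cite{ENO3}---and your sketch faithfully reconstructs the argument given there: the dictionary between faithfully graded $G$-crossed braided extensions and the tuples $(\B,\{\B_g\},\{M_{g,h}\},\{\alpha_{g,h,k}\})$, the identification of such a tuple with a tri-homomorphism $\uu{G}\to\uuPic{\B}$, and the passage to classifying spaces with the attendant Postnikov/obstruction calculus. In particular you correctly isolate the one genuinely nontrivial point, namely that the $G$-crossed braiding is exactly what forces the classifying map to factor through $\uuPic{\B}$ rather than $\uu{\text{BrPic}(\B)}$.
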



\begin{definition}

If a global symmetry $(G, \rho)$ of a UMC $\B$ can be gauged, then gauging is the two-step process that firstly $\B$ is extended to a unitary $G$-crossed braided fusion category $\cross{\B}{G}$, and secondly $\cross{\B}{G}$ is equivariantized to a UBFC $\gauge{\B}{G}{G}$.

\end{definition}

\begin{prop}\cite{Kiri}

A unitary $G$-crossed braided fusion extension $\cross{\B}{G}$ of a UMC $\B$ is modular if and only if $\gauge{\B}{G}{G}$ is modular.

\end{prop}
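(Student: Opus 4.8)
The plan is to pass through the categorical center (Drinfeld double) construction, which relates equivariantization of a $G$-crossed braided fusion category to the relative center, and then invoke the standard criterion that a braided fusion category is modular if and only if its symmetric center is trivial (Hilb), as recalled in Sec.~2.3. More precisely, for a $G$-crossed braided fusion category $\cross{\B}{G}$ there is a well-known equivalence relating $\gauge{\B}{G}{G} = (\cross{\B}{G})^G$ to a braided category built from $\cross{\B}{G}$; the key structural fact (due to Kirillov and to Drinfeld--Gelaki--Nikshych--Ostrik) is that the symmetric center of the equivariantization $(\cross{\B}{G})^G$ is controlled by the extended/equivariant $S$-matrix of $\cross{\B}{G}$. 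So the strategy reduces the modularity of $\gauge{\B}{G}{G}$ to the nondegeneracy of the extended $S$-matrix of $\cross{\B}{G}$, which is precisely the definition of modularity for the $G$-crossed category given just above in the excerpt.

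First I would set up the dictionary: objects of $(\cross{\B}{G})^G$ are equivariant objects $(V,f)$, and the braiding on the equivariantization is induced from the $G$-braiding $c_{X,Y}$ together with the action isomorphisms, exactly as in Subsection~\ref{categorical actions}. Next I would compute the $S$-matrix of $\gauge{\B}{G}{G}$ in terms of the extended $S$-matrix of $\cross{\B}{G}$: an equivariant simple object decomposes, after forgetting the equivariant structure, into a $G$-orbit of simples of $\cross{\B}{G}$ with a multiplicity recording an irreducible projective representation of the stabilizer, and the double braiding on equivariant objects is computed orbit-by-orbit. The entries of the $S$-matrix of the equivariantization are then weighted sums of the extended $S$-matrix entries over orbits. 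The statement to prove is that this matrix is nonsingular if and only if the extended $S$-matrix of $\cross{\B}{G}$ is nonsingular; this is a linear-algebra transfer between a matrix and its "orbit-averaged" version, where the change-of-basis matrices are invertible by the orthogonality relations for (projective) characters of the stabilizer subgroups. Alternatively, and more cleanly, I would identify $\mathcal{Z}((\cross{\B}{G})^G)$ with $\mathcal{Z}(\cross{\B}{G})$ (the $G$-equivariantization commutes with taking the center in an appropriate sense, cf.~\cite{ENO3, Kiri}) and observe that the symmetric center of $(\cross{\B}{G})^G$ is trivial precisely when the $G$-crossed category is modular.

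The main obstacle is the bookkeeping around stabilizers and projective representations: a $G$-orbit of simples in $\cross{\B}{G}$ need not have trivial stabilizer, and the equivariant simples lying over a given orbit are indexed by irreducible projective representations of the stabilizer with a cocycle determined by the action $2$-cocycle $\phi$. Getting the weights in the $S$-matrix correct — so that the orthogonality relations genuinely produce an invertible transition matrix in both directions — is the delicate point, and it is where one must be careful that the $G$-braiding condition $g(c_{X,Y}) = c_{g(X),g(Y)}$ is used to ensure the double-braiding scalars are constant on orbits. Once that is in place, the equivalence of nondegeneracy conditions, hence of modularity, follows; the reverse direction (modularity of $\gauge{\B}{G}{G}$ forcing modularity of $\cross{\B}{G}$) is the contrapositive statement that a nonzero transparent object in $\cross{\B}{G}$ produces, via averaging over $G$, a nontrivial object in the symmetric center of the equivariantization.
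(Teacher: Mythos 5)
The paper does not prove this proposition at all: it is quoted from \cite{Kiri}, so there is no internal proof to compare against. Judged on its own terms, your primary route --- decompose each simple equivariant object into a $G$-orbit of simples of $\cross{\B}{G}$ together with an irreducible projective representation of the stabilizer, express the $S$-matrix of $(\cross{\B}{G})^G$ as an orbit/character-weighted version of the extended $S$-matrix, and transfer nondegeneracy both ways via orthogonality of projective characters --- is exactly the strategy of the cited source, and the points you flag as delicate (the cocycle on the stabilizer, constancy of the double-braiding scalars on orbits via $g(c_{X,Y})=c_{g(X),g(Y)}$) are the right ones. One remark: since the hypothesis already makes $\B$ a UMC, the proposition immediately preceding this one (from \cite{DGNO2}) forces $\cross{\B}{G}$ to be modular, so the substantive content is the single implication that $\gauge{\B}{G}{G}$ is then modular; your converse via averaging a transparent object is still worth having for the general (non-modular $\B_e$) statement.

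Your proposed ``cleaner alternative,'' however, is wrong as stated: $\mathcal{Z}\bigl((\cross{\B}{G})^G\bigr)$ cannot be equivalent to $\mathcal{Z}(\cross{\B}{G})$, since $\dim(\cross{\B}{G})^G=|G|\dim\cross{\B}{G}$ and hence the two centers differ in dimension by a factor of $|G|^2$. The correct center statements in this circle of ideas are of the form $\gauge{\B}{G}{G}\otimes\overline{\B}\cong\mathcal{Z}(\cross{\B}{(\rho,\alpha,\beta)})$ (the theorem from \cite{DGNO2,DMNO} quoted in Section 3) or the \cite{GNN} identity $\gauge{\B}{G}{G}=\mathcal{Z}(\cross{\C}{G})$ when $\B=\mathcal{Z}(\C)$; either of these would give you a legitimate shortcut (modularity follows because the right-hand side is a Drinfeld center, hence nondegenerate, and one factors off the nondegenerate $\overline{\B}$), but the identification you wrote down is not one of them. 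Stick with your first route, or replace the alternative with the $\B\boxtimes\overline{\gauge{\B}{G}{G}}\cong\mathcal{Z}(\cross{\B}{G})$ argument.
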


\subsection{Obstructions for gauging and gauging data}

By Theorem \ref{ENO3712}, a global symmetry can be gauged if the global symmetry can be lifted to a map between classifying spaces $\text{BG} \to \text{B}\uuPic{B}$.  Using homotopy theory, we see that such liftings can have obstructions. The first lifting from the global symmetry $\rho: G\rightarrow \Aut{\B}$ to a $2$-homomorphism $\u{\rho}: \u{G}\rightarrow \u{\Aut{\B}}$ is the promotion of a group action on $\B$ to a $2$-group action on $\B$ by monoidal functors.  We will call this categorical action of $G$ on $\B$ a topological symmetry, i.e., a topological symmetry of $\B$ is a pair $(\u{G}, \u{\rho})$ such that $\u{\rho}: \u{G}\rightarrow \uAut{\B}$.  Topological symmetry in our sense is different from the topological symmetry in \cite{BBCW}, where topological symmetry refers to the full global symmetry group $\Aut{\B}$.  A topological symmetry $\u{\rho}: \u{G}\rightarrow \uAut{\B}$ can be gauged if $\u{\rho}$ can be lifted to a $\uu{\rho}: \uu{G}\rightarrow \uu{\Pic \B}$ when $\uAut{\B}$ is identified with $\u{\Pic \B}$

The $2$-group $\u{\Aut{\B}}$ has as a complete invariant the triple $(\Aut{\B}, \operatorname{Aut}_{\otimes}(\operatorname{Id}_\B), \phi)$, where $\phi$ is a cohomology class in $H^3(\Aut{\B}, \operatorname{Aut}_{\otimes}(\operatorname{Id}_\B))$.  Then $\rho$ can be lifted if and only if the pull-back cohomology class $O_3(\rho)={\rho}^{*}(\phi)\in H^3(G,\operatorname{Aut}_{\otimes}(\operatorname{Id}_\B))$ vanishes.  We will call this obstruction $O_3(\rho)$ the $H^3$-obstruction of $\rho$.  If this $H^3$-obstruction vanishes, then the possible liftings are parametrized by classes $\alpha \in H^2(G,\text{Inv}(\B))$.  Suppose a lifting specified by $(\rho, \alpha)$ is given, then to lift $\u{\rho}: \u{G}\rightarrow \u{\Aut{\B}}$ to a tri-homomorphism $\uu{\rho}: \uu{G}\rightarrow \uu{\Pic \B}$ has a further obstruction  $O_4(\rho, \alpha)\in H^4(G,U(1))$.  We will call this secondary obstruction the $H^4$-obstruction.  If this $H^4$-obstruction vanishes, the possible liftings are parametrized by cohomology classes $\beta \in  H^3(G,U(1))$.

\begin{definition}

Given a global symmetry  $\rho: G\rightarrow \Aut{\B}$ that can be gauged and a fixed gauging, then a gauging data related to the fixed gauging is a pair $(\alpha, \beta)\in H^2(G,\text{Inv}(\B))\times  H^3(G,U(1))$,  such that $O_4(\rho, \alpha)$ vanishes.

\end{definition}

Phrasing Theorems 8.4, 8.8, 8.9 from \cite{ENO3} in the language of gauging, we have the following proposition.

\begin{prop}\cite{ENO3}
Suppose $\B$ is a UMC with a gauging data $(\rho,\alpha,\beta)$ and $\cross{\B}{(\rho,\alpha,\beta)}$ is the extension of the UMC $\B$ to a unitary $G$-crossed fusion category, then $\cross{\B}{(\rho,\alpha,\beta)}$ has a canonical $G$-braiding and categorical $G$-action that make it into a unitary $G$-crossed modular category. 
\end{prop}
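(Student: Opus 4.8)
The statement to prove is that for a UMC $\B$ with gauging data $(\rho,\alpha,\beta)$, the unitary $G$-crossed fusion category $\cross{\B}{(\rho,\alpha,\beta)}$ carries a canonical $G$-braiding and a categorical $G$-action making it a unitary $G$-crossed modular category. The plan is to produce everything from the tri-homomorphism and then invoke the general structure theory already recalled in the excerpt, upgrading each piece to the unitary setting.

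First I would note that by the discussion following Theorem \ref{ENO3712}, the gauging data $(\rho,\alpha,\beta)$ determines a tri-homomorphism $\uu{\rho}: \uu{G}\rightarrow \uu{\Pic\B}$ (the class $\alpha\in H^2(G,\Inv(\B))$ fixes the $2$-homomorphism lifting $\rho$, the vanishing of $O_4(\rho,\alpha)$ allows passage to the tri-homomorphism, and $\beta\in H^3(G,U(1))$ selects it). By Theorem \ref{ENO3712} this corresponds to a $G$-crossed braided fusion category $\cross{\B}{(\rho,\alpha,\beta)}=\bigoplus_{g\in G}\B_g$ with a faithful $G$-grading, with $\B_e\cong\B$, and with the $G$-action and $G$-braiding that are part of the $G$-crossed braided structure (this is precisely the content of the structure data listed before the statement: the module equivalences $M_{g,h}$, the associators $\alpha_{g,h,k}$, together with the canonical action of $G$ compatible with the grading and the $G$-braiding). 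So the \emph{existence} of a $G$-braiding and categorical $G$-action is automatic from \cite{ENO3}; the two points that require argument are (i) that these can be chosen \emph{unitary}, so that $\cross{\B}{(\rho,\alpha,\beta)}$ is a unitary $G$-crossed braided fusion category in the sense of the definition in the excerpt, and (ii) that it is \emph{modular} as a $G$-crossed category, i.e. that its extended $S$-matrix is non-singular.

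For (i), I would use the unitary Picard and Brauer--Picard formalism of \cite{GHR}: there the relevant higher groupoids are built from \emph{unitary} invertible bimodule categories and \emph{unitary} module equivalences, and the tensor product $\boxtimes_\B$ is the unitary one. Running the same classification in that setting yields a unitary $G$-grading, unitary module equivalences $M_{g,h}$, and unitary associators; since $\B$ is a UMC the constituent module categories $\B_g$ inherit $C^*$-structures, and by the rigidity of $C^*$-structures on a fixed abelian category (the uniqueness result quoted in the Remark of Section \ref{subsection unitary fusion}) one can arrange all the structural isomorphisms, including the $G$-braiding $c_{X,Y}$ and the natural isomorphisms $\phi(g,h)$ of the $G$-action, to be unitary. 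Concretely, each natural isomorphism may be polar-decomposed and its unitary part retained; compatibility with the pentagon/hexagon-type axioms is preserved because those axioms are equalities of morphisms and unitarization is functorial on a $C^*$-category. This is the step I expect to be the main obstacle: it is conceptually routine given \cite{GHR}, but one must genuinely check that the \emph{coherence} data (not just the objects and $1$-morphisms) can be simultaneously unitarized, i.e. that the equivalence between the topological classification and the unitary one is an equivalence of the full higher-categorical structures, not merely of truncations.

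For (ii), I would appeal to the Proposition stated earlier in the excerpt: ``a unitary $G$-crossed braided extension $\cross{\B}{G}$ of a UMC $\B$ is modular if and only if $\B$ is modular'' (attributed to \cite{DGNO2}), whose nonunitary form is \cite[Prop.~4.56]{DGNO2} and whose extended-$S$-matrix reformulation appears in \cite[Section 9]{Kiri}. Since $\B$ is assumed to be a UMC, this gives non-singularity of the extended $S$-matrix of $\cross{\B}{(\rho,\alpha,\beta)}$, hence $G$-crossed modularity. Finally I would record that the canonicity asserted in the statement is the canonicity already present in \cite[Theorems 8.8, 8.9]{ENO3}: the $G$-braiding and $G$-action are not extra choices but are determined by the tri-homomorphism $\uu{\rho}$, and the only remaining data $\beta$ has already been fixed as part of the gauging data. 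Assembling (i) and (ii) with this observation completes the proof.
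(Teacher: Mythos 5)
Your proposal is correct and follows essentially the same route as the paper, which offers no proof of its own: the proposition is presented there purely as a rephrasing of Theorems 8.4, 8.8 and 8.9 of \cite{ENO3}, with the unitary setting deferred to \cite{GHR} and $G$-crossed modularity supplied by the earlier proposition quoted from \cite{DGNO2}. Your expansion of that citation --- existence and canonicity from the tri-homomorphism correspondence, unitarization via \cite{GHR}, modularity from $\B$ being a UMC --- is exactly the intended argument, with the only soft spot being the polar-decomposition justification of simultaneous unitarizability of the coherence data, a point the paper itself also leaves entirely to the references.
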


Thus the gauging data is the information required to extend a UMC $\B$ to a unitary $G$-crossed modular category $\cross{\B}{G}$ uniquely, whose equivariantization is the gauged UMC  $\B_{gauged}=\gauge{\B}{G}{G}$.

\subsection{General properties}

\begin{prop}\cite{GNN}
Suppose $G$ acts categorically on $\B=\mathcal{Z}(\C)$---the Drinfeld center of a unitary fusion category $\C$. Let $\cross{\C}{G}$ be the $G$-crossed braided extension of $\C$ from the $G$-action, then
$\gauge{\B}{G}{G}=\mathcal{Z}(\cross{\C}{G})$.
\end{prop}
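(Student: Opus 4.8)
The plan is to realize both $\gauge{\B}{G}{G}$ and $\mathcal{Z}(\cross{\C}{G})$ as two incarnations of a single braided category attached to the graded category $\cross{\C}{G}$, and to reduce the unavoidable coherence bookkeeping to the Gelaki--Naidu--Nikshych description of centers of graded fusion categories.

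First I would fix the data. The categorical $G$-action on $\B=\mathcal{Z}(\C)$ is the one induced by a categorical $G$-action on $\C$ via functoriality of the center, and $\cross{\C}{G}=\C\rtimes G=\bigoplus_{g\in G}\C_g$ is the corresponding faithfully $G$-graded fusion category, with $\C_e=\C$ and invertible degree-shifting objects $\delta_g\in\C_g$ satisfying $\delta_g\otimes\delta_h\cong\delta_{gh}$. Let $\mathcal{Z}_{\C}(\cross{\C}{G})$ be the relative center: its objects are pairs $(Z,\gamma)$ with $Z\in\cross{\C}{G}$ homogeneous and $\gamma$ a half-braiding of $Z$ against the subcategory $\C=\C_e$ only. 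I would then check, following \cite{DGNO2, GNN}, that $\mathcal{Z}_{\C}(\cross{\C}{G})$ is a unitary $G$-crossed braided fusion category: it is $G$-graded with $e$-component $\mathcal{Z}_{\C}(\C)=\mathcal{Z}(\C)=\B$; transport of half-braidings along conjugation by the $\delta_g$ gives a categorical $G$-action compatible with the grading; and the half-braidings assemble into a $G$-braiding. Restricting this $G$-action to the $e$-component recovers the given topological symmetry on $\B$, since it is the functorial image of the $G$-action on $\C$. Because $\mathcal{Z}(\C)$ is always non-degenerate, $\B$ is a UMC and $\mathcal{Z}_{\C}(\cross{\C}{G})$ is precisely the $G$-crossed modular extension of $\B$ produced by this gauging, so $\gauge{\B}{G}{G}=\big(\mathcal{Z}_{\C}(\cross{\C}{G})\big)^G$.

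The core step is then to produce a unitary braided tensor equivalence from $\big(\mathcal{Z}_{\C}(\cross{\C}{G})\big)^G$ to $\mathcal{Z}(\cross{\C}{G})$. The guiding observation is that a half-braiding of $Z\in\cross{\C}{G}$ against \emph{all} of $\cross{\C}{G}$ is the same datum as a half-braiding against $\C$ together with compatible isomorphisms against the generators $\delta_g$; since tensoring with $\delta_g$ implements, up to a degree shift, the autoequivalence $g$, these extra isomorphisms form exactly a $G$-equivariant structure $\{f_g\}$ on $(Z,\gamma)$, and the compatibility $\delta_g\otimes\delta_h\cong\delta_{gh}$ becomes the equivariance identity \eqref{deltau}. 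Dually, a morphism in $\mathcal{Z}(\cross{\C}{G})$---one commuting with every half-braiding---is the same as a morphism commuting with the $\C$-half-braidings and with the $\delta_g$'s, i.e.\ a $G$-equivariant morphism. This yields an equivalence of $C^*$-categories; I would then verify that it is monoidal, matching the twisted associator of $\cross{\C}{G}$ and the $G$-crossed associator $\alpha_{g,h,k}$ against the equivariantization tensor product $l_g=f_gf'_g\psi(g)^*$, and braided, with the canonical braiding of $\mathcal{Z}(\cross{\C}{G})$ corresponding to the braiding that $\big(\mathcal{Z}_{\C}(\cross{\C}{G})\big)^G$ inherits from the $G$-braiding. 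Unitarity needs no extra work: the center of a UFC is a UBFC, the equivariantization of a UFC is a UFC, and the identification functor preserves the $\dagger$-structures.

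The main obstacle is exactly this coherence check: the dictionary between half-braidings and equivariant structures must respect \emph{every} piece of coherence data---the unit and associativity constraints and, above all, the two hexagon/naturality axioms---which forces one to track the twisted associator of $\cross{\C}{G}$, the $G$-crossed associator of $\mathcal{Z}_{\C}(\cross{\C}{G})$, and the natural isomorphisms $\phi(g,h),\psi(g)$ of the action simultaneously. Rather than grind through this, the efficient route is to invoke the Gelaki--Naidu--Nikshych theorem: for any faithfully $G$-graded fusion category $\mathcal{D}=\bigoplus_g\mathcal{D}_g$ there is a canonical braided equivalence $\mathcal{Z}(\mathcal{D})\cong\big(\mathcal{Z}_{\mathcal{D}_e}(\mathcal{D})\big)^G$, in which this coherence has already been established; applying it to $\mathcal{D}=\cross{\C}{G}$ gives $\mathcal{Z}(\cross{\C}{G})\cong\big(\mathcal{Z}_{\C}(\cross{\C}{G})\big)^G=\gauge{\B}{G}{G}$. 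What then remains is only the two identifications already indicated---that $\mathcal{Z}_{\C}(\cross{\C}{G})$ is the $G$-crossed braided extension of $\B$ determined by the given symmetry, and that the GNN equivalence upgrades to the unitary setting---the latter being routine given the unitary $G$-crossed and equivariantization formalism of \cite{GHR}.
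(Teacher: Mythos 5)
The paper offers no argument for this proposition at all: it is quoted verbatim from \cite{GNN}, and your proposal ultimately rests on exactly the same source, namely the Gelaki--Naidu--Nikshych equivalence $\mathcal{Z}(\mathcal{D})\cong\bigl(\mathcal{Z}_{\mathcal{D}_e}(\mathcal{D})\bigr)^G$ for a faithfully $G$-graded fusion category, applied to $\mathcal{D}=\cross{\C}{G}$ together with the identification of the relative center $\mathcal{Z}_{\C}(\cross{\C}{G})$ with the $G$-crossed modular extension of $\B=\mathcal{Z}(\C)$. So your route is essentially the paper's, just with the citation unpacked. One caution: your ``guiding observation'' builds the equivariant structure out of invertible degree-shifting objects $\delta_g\in\C_g$ with $\delta_g\otimes\delta_h\cong\delta_{gh}$; these exist only for quasi-trivial extensions such as $\C\rtimes G$, whereas a general $G$-extension attached to an action on $\mathcal{Z}(\C)$ via $\operatorname{BrPic}(\C)\cong\Aut{\mathcal{Z}(\C)}$ need have no invertible objects in its nontrivial components, and the $G$-action on the relative center must then be defined through the bimodule-category structure as in \cite{GNN}. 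Since your closing paragraph invokes the general GNN theorem rather than the $\delta_g$ bookkeeping, this is a flaw of motivation rather than of the proof itself.
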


Recall a fusion category is \emph{weakly integral} if its Frobenius-Perron dimension is an integer.  Two modular categories $\B$ and $\widetilde{\B}$ are \emph{Witt equivalent} if there exist spherical fusion categories
$\C$ and $\widetilde{\C}$ such that $\B\boxtimes \mathcal{Z}(\C) \cong \widetilde{\B}\boxtimes \mathcal{Z}(\widetilde{\C})$ as braided tensor categories, where $\mathcal{Z}(\C)$ and $\mathcal{Z}(\widetilde{\C})$ are the Drinfeld centers of $\C$ and $\widetilde{\C}$, respectively.

The following theorem follows from \cite{DGNO2} on taking a core, which is the inverse of gauging, and Corollary 3.30 of \cite{DMNO}.

\begin{theorem}\cite{DGNO2,DMNO}
Let $\B$ be a UMC with a gauging data $(\rho, \alpha, \beta)$, and $\gauge{\B}{G}{G}$ be its gauged UMC. Then $\B \otimes \overline{\gauge{\B}{G}{G}}\cong \mathcal{Z}(\cross{\B}{(\rho,\alpha,\beta)})$.

It follows that
\begin{enumerate}
\item Gauging preserves topological central charge.
\item dim($\gauge{\B}{G}{G}$) $= |G|^2$ dim($\B$).
\item  $\B$ is weakly integral if and only if $\gauge{\B}{G}{G}$ is  weakly integral.  In particular, if $\B$ is pointed then $\gauge{\B}{G}{G}$ is weakly integral. 
\item Gauging preserves Witt-equivalence classes.
\end{enumerate}
\end{theorem}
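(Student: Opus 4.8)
The starting point is the isomorphism $\B \otimes \overline{\gauge{\B}{G}{G}}\cong \mathcal{Z}(\cross{\B}{(\rho,\alpha,\beta)})$, which we take as given from \cite{DGNO2,DMNO} (it follows from the ``taking a core'' description of the inverse of gauging together with Corollary 3.30 of \cite{DMNO}). From this single relation, each of the four numbered consequences is extracted by applying a known invariant of modular categories to both sides. First I would recall the relevant dimension and central charge identities: for any $G$-crossed modular extension $\D=\cross{\B}{(\rho,\alpha,\beta)}$ of $\B$ with faithful $G$-grading one has $\dim(\D)=|G|\,\dim(\B)$, and for the Drinfeld center $\dim(\mathcal{Z}(\D))=\dim(\D)^2$; the (multiplicative) central charge of $\mathcal{Z}(\D)$ is $1$, while $\overline{\gauge{\B}{G}{G}}$ has central charge inverse to that of $\gauge{\B}{G}{G}$, and Deligne product multiplies central charges.

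\emph{Step 1 (central charge).}
Apply the Gauss sum / multiplicative central charge $\xi$ to both sides of $\B\otimes\overline{\gauge{\B}{G}{G}}\cong\mathcal{Z}(\cross{\B}{(\rho,\alpha,\beta)})$. Since $\xi(\mathcal{Z}(\D))=1$ for any spherical fusion category and $\xi$ is multiplicative under $\boxtimes$ with $\xi(\overline{\M})=\overline{\xi(\M)}=\xi(\M)^{-1}$, we get $\xi(\B)\,\xi(\gauge{\B}{G}{G})^{-1}=1$, i.e.\ $\xi(\gauge{\B}{G}{G})=\xi(\B)$. This is exactly preservation of topological central charge, giving (1).

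\emph{Step 2 (dimension).}
Take Frobenius--Perron (equivalently categorical) dimensions. The right-hand side has dimension $\dim\big(\mathcal{Z}(\cross{\B}{(\rho,\alpha,\beta)})\big)=\dim(\cross{\B}{(\rho,\alpha,\beta)})^2=\big(|G|\dim(\B)\big)^2=|G|^2\dim(\B)^2$, using that a faithfully $G$-graded extension of $\B$ has total dimension $|G|\dim(\B)$. The left-hand side has dimension $\dim(\B)\,\dim(\gauge{\B}{G}{G})$, since $\dim(\overline{\M})=\dim(\M)$. Equating and dividing by $\dim(\B)$ yields $\dim(\gauge{\B}{G}{G})=|G|^2\dim(\B)$, which is (2).

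\emph{Step 3 (weak integrality) and Step 4 (Witt class).}
For (3): a braided fusion category is weakly integral iff its dimension is an integer, and weak integrality is preserved under Deligne products and under $\overline{(-)}$; moreover $\mathcal{Z}(\D)$ is weakly integral iff $\D$ is, and $\D=\cross{\B}{(\rho,\alpha,\beta)}$ is weakly integral iff its trivial component $\B$ is (each graded piece is an invertible $\B$-module category, so has the same dimension as $\B$, hence $\dim(\D)=|G|\dim(\B)$ and integrality of $\dim(\B)$ forces integrality of all the fusion multiplicities). Chasing these equivalences through the isomorphism of Step 0 shows $\B$ weakly integral $\iff\mathcal{Z}(\cross{\B}{(\rho,\alpha,\beta)})$ weakly integral $\iff \B\otimes\overline{\gauge{\B}{G}{G}}$ weakly integral $\iff \gauge{\B}{G}{G}$ weakly integral; the pointed case is the special case $\dim(\B)\in\Z$ with all simple objects invertible. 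For (4): the Witt group is defined precisely so that $\mathcal{Z}(\C)$ is trivial in it for every spherical fusion category $\C$, and $[\overline{\M}]=-[\M]$, $[\M\boxtimes\N]=[\M]+[\N]$. Hence $[\B]-[\gauge{\B}{G}{G}]=[\mathcal{Z}(\cross{\B}{(\rho,\alpha,\beta)})]=0$ in the Witt group, so $[\gauge{\B}{G}{G}]=[\B]$, which is (4).

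\emph{Main obstacle.}
None of Steps 1--4 is deep once Step 0 is in hand; the substance of the theorem is the identification $\B\otimes\overline{\gauge{\B}{G}{G}}\cong\mathcal{Z}(\cross{\B}{(\rho,\alpha,\beta)})$ itself, which is where the ``taking a core is inverse to gauging'' machinery of \cite{DGNO2} and the de-equivariantization/center computation of \cite{DMNO} do the real work. The only point requiring a little care in the consequences is making sure the dimension bookkeeping for the $G$-crossed extension is correct — that a \emph{faithfully} $G$-graded $G$-crossed modular extension of $\B$ has total dimension exactly $|G|\dim(\B)$ and not some proper divisor — which is where faithfulness of the grading (built into the definition of $\cross{\B}{G}$) is used; with that fixed, (1)--(4) follow as above.
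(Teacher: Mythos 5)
Your proposal is correct and follows essentially the same route as the paper, which itself gives no written proof: it cites the core/de-equivariantization results of \cite{DGNO2} and Corollary 3.30 of \cite{DMNO} for the isomorphism $\B \otimes \overline{\gauge{\B}{G}{G}}\cong \mathcal{Z}(\cross{\B}{(\rho,\alpha,\beta)})$ and leaves the four consequences as immediate applications of central charge, global dimension, weak integrality, and the Witt group, exactly as you carry out. Your bookkeeping (including the use of faithfulness of the grading to get $\dim(\cross{\B}{G})=|G|\dim(\B)$) is a faithful filling-in of what the paper leaves implicit.
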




\section{Sequentially Gauging}

In this section, we show that if the global symmetry group $G$ of $\B$ has a semi-product structure, i.e. $G = N \rtimes H$, then the gauging process can be done sequentially, that is, one can first gauge $\B$ by the normal subgroup $N$, and then gauge the resulting $\gauge{\B}{N}{N}$  by $H$. We show that for any $\uu{\rho} : \uu{G} \longrightarrow \uuPic{\B}$, there exist $\uu{\rho_1}: \uu{N} \longrightarrow \uuPic{\B}, \, \uu{\rho_2} : \uu{H} \longrightarrow \uuPic{\gauge{\B}{N}{N}} $, such that $\gauge{\B}{G}{G}$ is equivalent to $\gauge{(\gauge{\B}{N}{N})}{H}{H}$. 

\begin{rem}
In \cite{DGNO2}, the authors gave a similar statement as above, without a proof, that equivaritization can be done sequentially for a fusion category with a $G$-action.
\end{rem}

By Theorem \ref{ENO3712}, morphisms $\uu{\rho} : \uu{G} \longrightarrow \uuPic{\B}$ are in bijection with $G$-crossed braided extensions of $\B$, $\cross{\B}{G} = \bigoplus\limits_{g \in G} \B_g$. The action of $G$ on $\cross{\B}{G}$, denoted by $R_{\rho}$, is defined as follows. $R_{\rho}(g)$ is the equivalence $\B_{g'} \longrightarrow \B_{gg'g^{-1}}$, such that the following $\B$-module functors are isomorphic

\begin{equation} \label{G-crossed}
\bullet \otimes X \cong R_{\rho}(g)(X) \otimes \bullet : \B_g \longrightarrow \B_{gg'}, \quad \forall X \in \B_{g'}
\end{equation}

Extending it to $\cross{\B}{G}$ linearly, we get a $G$-action $R_{\rho}$. Moreover, the isomorphism in (\ref{G-crossed}) gives the $G$-crossed braiding
$$c_{Y,X}: Y \otimes X \overset{\sim}{\longrightarrow} R_{\rho}(g)(X) \otimes Y, \, \quad Y \in \B_g, X \in \B_{g'}.$$
See  \cite[Theorem 7.12]{ENO3} for a more detailed explanation.


For $h \in H$, let $\C_h = \bigoplus\limits_{n \in N} \B_{hn}$. Thus we have $\cross{\B}{G} = \bigoplus\limits_{h \in H} \C_h$. Let $\uu{\rho_1} = \uu{\rho}_{|_{N}}$ be the restriction of $\uu{\rho}$ on $\uu{N}$.

\begin{lem} \label{lem:rho1=rho}
Let $\uu{\rho}, \uu{\rho_1}$ be as above, then $\C_e = \bigoplus\limits_{n \in N} \B_{n} $ with the $N$-crossed braided structure induced from $\cross{\B}{G}$ is the $N$-extension of $\B$ corresponding to $\uu{\rho_{1}},$ namely $\C_e = \cross{\B}{N}.$
\begin{proof}
Apparently, $\C_e$ is an $N$-extension of $\B$ with $\uu{\rho_1}(n) = \B_n, n \in N$. We only need to show the action of $N$ on $\C_e$ induced from $R_{\rho}$ is the same as that determined by $\uu{\rho_1},$ namely, $R_{\rho}(n) = R_{\rho_1}(n)$, but this is clearly true since both actions are determined from (\ref{G-crossed}).
\end{proof}
\end{lem}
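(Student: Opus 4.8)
The plan is to verify that $\C_e$, with the $N$-crossed braided structure it inherits from $\cross{\B}{G}$, realizes exactly the three pieces of data that by Theorem \ref{ENO3712} determine the $N$-crossed extension $\cross{\B}{N}$ attached to $\uu{\rho_1}=\uu{\rho}|_{\uu{N}}$: the underlying $N$-graded fusion category, the categorical $N$-action, and the $N$-braiding.

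First I would note that, since $N\le G$ is a subgroup, the faithful $G$-grading of $\cross{\B}{G}$ restricts to a faithful $N$-grading on the full fusion subcategory $\C_e=\bigoplus_{n\in N}\B_n$ (closure under $\otimes$ is just $nm\in N$), with trivial component $\B_e=\B$. The invertible $\B$-module structures on the $\B_n$, the module equivalences $M_{n,m}\colon\B_n\boxtimes_{\B}\B_m\to\B_{nm}$, and the module-functor associators $\alpha_{n,m,k}$ inherited from $\cross{\B}{G}$ are, by construction, precisely the values of the tri-homomorphism $\uu{\rho}$ on elements, pairs and triples drawn from $N$; by the very definition of the restriction $\uu{\rho_1}$, these coincide with the data of $\cross{\B}{N}$. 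Hence the underlying $N$-graded fusion categories of $\C_e$ and of $\cross{\B}{N}$ are identical.

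Next I would handle the $N$-action and the $N$-braiding. Because $G=N\rtimes H$ with $N$ normal, for $n\in N$ the auto-equivalence $R_\rho(n)\colon\B_{g'}\to\B_{ng'n^{-1}}$ preserves the subcategory $\C_e$, so $R_\rho|_N$ is a categorical $N$-action on $\C_e$; likewise the $G$-braiding $c_{Y,X}$ restricts, for $Y\in\B_n$ and $X\in\C_e$ with $n\in N$, to an $N$-braiding on $\C_e$. The key point, which I would isolate from the construction recalled around $(\ref{G-crossed})$, i.e.\ from \cite[Theorem 7.12]{ENO3}, is that neither the action nor the crossed braiding involves any further choice: both are uniquely pinned down by the graded tensor structure through the module-functor identification $\bullet\otimes X\cong R(X)\otimes\bullet$. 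Applying this to $\C_e$ on one hand and to $\cross{\B}{N}$ on the other, and using that their underlying $N$-graded fusion categories already agree, we get $R_\rho(n)=R_{\rho_1}(n)$ for all $n\in N$ and that the two $N$-braidings coincide; hence $\C_e=\cross{\B}{N}$ as $N$-crossed braided extensions of $\B$.

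The step I expect to be the main obstacle is this last key point: stating precisely, and matching it to the correct result in \cite{ENO3}, that inside $\uuPic{\B}$ the $G$-action and $G$-braiding on a $G$-crossed extension are rigidly determined by its graded fusion structure via $(\ref{G-crossed})$, so that restriction along the inclusion $\uu{N}\hookrightarrow\uu{G}$ automatically carries the $G$-crossed braided structure to the $N$-crossed braided structure and no independent comparison of actions or braidings is required. Everything else is routine bookkeeping with subgroups and gradings.
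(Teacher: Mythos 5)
Your proposal is correct and follows essentially the same route as the paper: one observes that the underlying $N$-graded fusion structure of $\C_e$ is by construction the restriction $\uu{\rho}|_{\uu N}=\uu{\rho_1}$, and then reduces everything to the fact that the $G$-action and $G$-crossed braiding are uniquely determined by the graded tensor structure via \eqref{G-crossed} (i.e.\ \cite[Theorem 7.12]{ENO3}), so the restricted and intrinsic $N$-structures must agree. The paper's proof is just a terser version of the same argument; your only cosmetic slip is invoking normality of $N$ to see that $R_\rho(n)$ preserves $\C_e$, where closure of $N$ under conjugation by its own elements already suffices (normality is what is needed later to preserve the other components $\C_h$).
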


Now we restrict the action $R_{\rho}$ to the subgroup $N$. By Lemma \ref{lem:rho1=rho}, $R_{\rho}(n)(X) = R_{\rho_1}(n)(X), n \in N, X \in \C_e.$
If $X \in \B_{hn'}, h \in H$,  then $\lsup{n}{X} \in \B_{nhn'n^{-1}}$. Since $nhn'n^{-1} = h (h^{-1}nh)n'n^{-1} \in hN$, we have $\lsup{n}{X} \in \C_h$. Therefore, the action of $N$ preserves each $\C_h$.

Now we take the equivariantization of $\cross{\B}{G}$ with respect to the action of $N$. By the argument above, the equivariantization preserves each $\C_h$. Thus we have $\gauge{\B}{G}{N} = \bigoplus\limits_{h \in H} \C_h^{N}$, where $\C_e^{N} = \gauge{\B}{N}{N}$ by definition.

\begin{lem}\label{lem:gauge N}
$\gauge{\B}{G}{N}$ is an $H$-crossed braided category which is an $H$-extension of $\C_e^{N}= \gauge{\B}{N}{N},$ and thus we get a morphism $\uu{\rho_2}: \uu{H} \longrightarrow \uuPic{\gauge{\B}{N}{N}}$ corresponding to this extension.
\begin{proof}

Firstly, $\C_{h_1}^{N} \otimes \C_{h_2}^{N} \subset \C_{h_1h_2}^N$. This is direct to check. For $(X_i, \varphi_i) \in \C_{h_i}^N$, $X_i \in \B_{h_in_i}, i = 1, 2$, we have $(X_1, \varphi_1) \otimes (X_2, \varphi_2) = (X_1 \otimes X_2, \varphi_1\otimes \varphi_2).$ Then $X_1 \otimes X_2 \in \B_{h_1n_1h_2n_2} = \B_{h_1h_2(h_2^{-1}n_1h_2n_2)} \subset \C_{h_1h_2}^{N}$.

Secondly, we define an $H$-action on $\gauge{\B}{G}{N}$. Recall that for $(X, \varphi) \in \C_{h'}^{N},$ we have, for each $n \in N$, an isomorphism $\lsup{n}{X} \overset{\varphi_n}{\longrightarrow} X$. For any $h \in H$, we define $R(h)(X, \varphi) := (\lsup{h}{X}, \psi),$  where $\psi(n) =  \lsuprsub{h}{\varphi}{{h^{-1}nh}}: \lsup{{nh}}{X} \longrightarrow \lsup{h}{X} $. We also write $\psi = \lsuprsub{h}{\varphi}{{h^{-1} \bullet h}}$. For a morphism $f: (X, \varphi) \longrightarrow (X', \varphi'),$ define $R(h)(f) := \lsup{h}{f}.$ It is straightforward to check that $R$ is an action of $H$ on $\gauge{\B}{G}{N}$ by tensor automorphisms and $R(h)(\C_{h'}^{N}) \subset \C_{hh'h^{-1}}^{N}.$

The $H$-crossed braiding is given as follows: given $(X_{h_1n_1},\varphi) \in \C_{h_1}^N, (X_{h_2n_2},\varphi') \in \C_{h_2}^N,$ where $X_{h_in_i} \in \B_{h_in_i}, i =1,2,$ we have $(X_{h_1n_1},\varphi) \otimes (X_{h_2n_2},\varphi') = (X_{h_1n_1} \otimes X_{h_2n_2}, \varphi \otimes \varphi'),$ and $R(h_1)(X_{h_2n_2},\varphi') \otimes (X_{h_1n_1},\varphi) = (\lsuprsub{{h_1}}{X}{{h_2n_2}} \otimes X_{h_1n_1}, \lsuprsub{{h_1}}{{\varphi'}}{{h_1^{-1}\bullet h_1}} \otimes \varphi), $ then the crossed braiding is defined as the following compositions:
$$X_{h_1n_1} \otimes X_{h_2n_2} \overset{c}{\longrightarrow} \lsuprsub{{h_1n_1}}{X}{{h_2n_2}} \otimes X_{h_1n_1} \overset{\lsuprsub{{h_1}}{{\varphi'}}{{n_1}} \otimes Id}{\longrightarrow} \lsuprsub{{h_1}}{X}{{h_2n_2}} \otimes X_{h_1n_1},$$
where $c$ is the crossed braiding in $\cross{\B}{G}$. Again, it is not hard to check this defines an $H$-crossed braiding.

\end{proof}
\end{lem}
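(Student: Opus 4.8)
The plan is to carry out the explicit constructions already begun above and then appeal to Theorem \ref{ENO3712}. Recall from the discussion preceding the lemma that the restricted $N$-action $R_{\rho}|_N$ on $\cross{\B}{G}$ preserves each subcategory $\C_h=\bigoplus_{n\in N}\B_{hn}$, so the $N$-equivariantization splits as $\gauge{\B}{G}{N}=\bigoplus_{h\in H}\C_h^N$ with $\C_e^N=\gauge{\B}{N}{N}$. First I would check that $\gauge{\B}{G}{N}$ is a fusion category with a \emph{faithful} $H$-grading. That the tensor product respects the grading is the direct computation $X_{h_1n_1}\otimes X_{h_2n_2}\in\B_{h_1n_1h_2n_2}=\B_{h_1h_2(h_2^{-1}n_1h_2n_2)}\subseteq\C_{h_1h_2}$ (together with $(X_1,\varphi_1)\otimes(X_2,\varphi_2)=(X_1\otimes X_2,\varphi_1\otimes\varphi_2)$), so $\C_{h_1}^N\otimes\C_{h_2}^N\subseteq\C_{h_1h_2}^N$. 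Faithfulness holds because each $\C_h$ is nonzero, the $G$-grading of $\cross{\B}{G}$ being faithful, and the equivariantization of a nonzero category under a finite group action is nonzero, as it contains $\mathrm{Ind}(V)$ for any nonzero $V\in\C_h$.

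Next I would promote the conjugation action of $H$. Since $G=N\rtimes H$, for $h\in H$ the autoequivalence $R_{\rho}(h)$ of $\cross{\B}{G}$ intertwines the $N$-action with its relabeling by the automorphism $n\mapsto h^{-1}nh$ of $N$, up to the coherence data $\phi$ of the categorical $G$-action; this is precisely what is needed for $R_{\rho}(h)$ to descend to an autoequivalence $R(h)$ of $\gauge{\B}{G}{N}$, given on objects by $R(h)(X,\varphi)=(\lsup{h}{X},\psi)$ with $\psi_n=\lsuprsub{h}{\varphi}{h^{-1}nh}$ as in the sketch. I would verify that $\psi$ satisfies the equivariance condition $(\ref{deltau})$ (which reduces to that condition for $\varphi$ together with the coherence of $\phi$ on the conjugated pair), that $R$ is monoidal, that $R(h)(\C_{h'}^N)\subseteq\C_{hh'h^{-1}}^N$, and that the isomorphisms $R(h_1)R(h_2)\Rightarrow R(h_1h_2)$ inherited from $\phi(h_1,h_2)$ make $R$ a genuine categorical ($2$-group) action of $H$ on $\gauge{\B}{G}{N}$.

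Then I would install the $H$-crossed braiding exactly as in the sketch: for $(X_{h_1n_1},\varphi)\in\C_{h_1}^N$ and $(X_{h_2n_2},\varphi')\in\C_{h_2}^N$ take
$$X_{h_1n_1}\otimes X_{h_2n_2}\overset{c}{\longrightarrow}\lsuprsub{{h_1n_1}}{X}{{h_2n_2}}\otimes X_{h_1n_1}\overset{\lsuprsub{{h_1}}{{\varphi'}}{{n_1}}\otimes\,\id}{\longrightarrow}\lsuprsub{{h_1}}{X}{{h_2n_2}}\otimes X_{h_1n_1},$$
with $c$ the $G$-crossed braiding of $\cross{\B}{G}$, and check that this is a morphism of $N$-equivariant objects landing in $R(h_1)(X_{h_2n_2},\varphi')\otimes(X_{h_1n_1},\varphi)$, that it is natural, that it satisfies $R(h)(c_{X,Y})=c_{R(h)X,R(h)Y}$, and that it obeys the two hexagon-type consistency axioms of \cite[Definition 4.41]{DGNO2}; each of these follows by combining the corresponding property of $c$ in $\cross{\B}{G}$ with the equivariance identity for the $\varphi$'s. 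With all of this verified, $\gauge{\B}{G}{N}$ is an $H$-crossed braided fusion category, faithfully graded over $H$, with trivial component $\gauge{\B}{N}{N}$; applying Theorem \ref{ENO3712} to $H$ and $\gauge{\B}{N}{N}$ then produces the tri-homomorphism $\uu{\rho_2}:\uu{H}\to\uuPic{\gauge{\B}{N}{N}}$ classifying this extension.

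The main obstacle is the coherence bookkeeping: showing that the associativity constraints of $\cross{\B}{G}$ and the coherence data $(\psi,\phi)$ of the categorical $G$-action descend compatibly, so that $R$ is truly a $2$-group action and the crossed braiding satisfies \emph{every} diagram in \cite[Definition 4.41]{DGNO2}. Well-definedness of each individual piece on equivariant objects, and faithfulness of the $H$-grading, are routine; it is the simultaneous compatibility of grading, $H$-action, and $H$-braiding — each inherited component-by-component from the $G$-crossed structure through the semidirect-product decomposition — that requires care.
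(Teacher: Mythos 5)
Your proposal is correct and follows essentially the same route as the paper's proof: the same grading computation $X_1\otimes X_2\in\B_{h_1h_2(h_2^{-1}n_1h_2n_2)}$, the same formula $R(h)(X,\varphi)=(\lsup{h}{X},\lsuprsub{h}{\varphi}{h^{-1}\bullet h})$ for the $H$-action, and the same two-step composition for the $H$-crossed braiding, with Theorem \ref{ENO3712} supplying $\uu{\rho_2}$ at the end. The extra points you flag (faithfulness of the $H$-grading and the coherence bookkeeping) are exactly the verifications the paper leaves as "straightforward to check."
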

Therefore, by Lemma \ref{lem:gauge N}, we can take the equivariantization of $\gauge{\B}{G}{N}$ with respect to $H$, namely, $[\gauge{\B}{G}{N}]^{H} = \gauge{(\gauge{\B}{N}{N})}{H}{H}.$ The following theorem proves that $[\gauge{\B}{G}{N}]^{H} $ is braided equivalent to $\gauge{\B}{G}{G}$.

\begin{thm}\label{thm:seq gauge}
Let $\uu{\rho} : \uu{G} \longrightarrow \uuPic{\B}$, then there exist $\uu{\rho_1}: \uu{N} \longrightarrow \uuPic{\B}, \, \uu{\rho_2} : \uu{H} \longrightarrow \uuPic{\gauge{\B}{N}{N}} $, such that $\gauge{(\gauge{\B}{N}{N})}{H}{H}$ is braided equivalent to $\gauge{\B}{G}{G}$.
\begin{proof}
Let $\uu{\rho_1} = \uu{\rho}_{|_{N}},$ $\uu{\rho_2}$ be provided in Lemma \ref{lem:gauge N}.

We shall prove the theorem by defining a functor $F: \gauge{(\gauge{\B}{N}{N})}{H}{H} \longrightarrow \gauge{\B}{G}{G}$ and its inverse $F^{-1}$.

Given $((X,\varphi),\psi) \in \gauge{(\gauge{\B}{N}{N})}{H}{H},$ we have isomorphisms
\begin{equation}
\lsup{n}{X} \overset{\varphi_n}{\longrightarrow} X, \quad \forall n \in N
\end{equation}
\begin{equation}
(\lsup{h}{X}, \lsuprsub{h}{\varphi}{{h^{-1}\bullet h}}) \overset{\psi_{h}}{\longrightarrow} (X, \varphi), \quad \forall h \in H
\end{equation}
such that the following diagrams commute:

\begin{equation} \label{Equ: varphi}
\xymatrix{
\forall n_1, n_2 \in N &\lsup{{n_1n_2}}{X} \ar[r]^-{\lsuprsub{{n_1}}{\varphi}{{n_2}}} \ar[dr]_-{\varphi_{n_1n_2}} & \lsup{{n_1}}{X} \ar[d]^-{\varphi_{n_1}}\\
& & X\\
}
\end{equation}

\begin{equation} \label{Equ:varphi psi}
\xymatrix{
\forall n \in N, h \in H &  \lsup{{nh}}{X} \ar[r]^-{\lsuprsub{h}{\varphi}{{{h^{-1}nh}}}} \ar[d]^-{\lsuprsub{n}{\psi}{h}} & \quad \lsup{h}{X} \ar[d]^-{\psi_h} \\
 & \lsup{n}{X} \ar[r]^-{\varphi_n} & X \\
}
\end{equation}

\begin{equation} \label{equ:psi}
\xymatrix{
\forall h_1, h_2 \in H &\lsup{{h_1h_2}}{X} \ar[r]^-{\lsuprsub{{h_1}}{\psi}{{h_2}}} \ar[dr]_-{\psi_{h_1h_2}} & \lsup{{h_1}}{X} \ar[d]^-{\psi_{h_1}}\\
& & X\\
}
\end{equation}

Define $F((X, \varphi), \psi):= (X, \tau), $ where for any $g = nh \in G$, $\tau_g = \psi_{h} \lsuprsub{h}{\varphi}{{h^{-1}nh}} = \varphi_n \lsuprsub{n}{\psi}{h}$ (see Diagram \eqref{Equ:varphi psi}). For any morphism $f: ((X, \varphi), \psi) \longrightarrow ((X', \varphi'), \psi')$, define $F(f):= f$. We need to show $(X,\tau)$ and $F(f)$ are in $\gauge{\B}{G}{G}$.

For any $g_1 = n_1h_1, g_2 = n_2h_2 \in G, $ we have

\begin{align*}
\tau_{g_1}\lsuprsub{{g_1}}{\tau}{{g_2}} =& \varphi_{n_1}\lsuprsub{{n_1}}{\psi}{{h_1}} \lsup{{n_1h_1}}{{(\varphi_{n_2}\lsuprsub{{n_2}}{\psi}{{h_2}})}}\\ =& \varphi_{n_1} \lsuprsub{{n_1}}{\psi}{{h_1}} \lsuprsub{{n_1h_1}}{\varphi}{{n_2}} \lsuprsub{{n_1h_1n_2}}{\psi}{{h_2}}\\  \xlongequal{\operatorname{Equ} \, \ref{Equ:varphi psi}}& \varphi_{n_1} \lsuprsub{{n_1}}{\varphi}{{h_1n_2h_1^{-1}}} \lsuprsub{{n_1h_1n_2h_1^{-1}}}{\psi}{{h_1}} \lsuprsub{{n_1h_1n_2}}{\psi}{{h_2}}\\ \xlongequal{\operatorname{Equ} \, \ref{Equ: varphi},\ref{equ:psi}} &\varphi_{n_1h_1n_2h_1^{-1}} \lsuprsub{{n_1h_1n_2h_1^{-1}}}{\psi}{{h_1h_2}}\\ = &\tau_{g_1g_2}.
\end{align*}

This shows $(X,\tau)$ is an object of $\gauge{\B}{G}{G}$. Next, we need to show if $f: ((X, \varphi), \psi) \longrightarrow ((X', \varphi'), \psi')$, then $f: (X,\tau) \longrightarrow (X', \tau')$ is also a morphism in $\gauge{\B}{G}{G}$. This is justified by the following diagram.

\begin{equation}
\xymatrix{
\lsup{{nh}}{X} \ar[rr]^-{\tau_{nh}} \ar[ddd]_-{\lsup{{nh}}{f}} \ar[dr]^-{\lsuprsub{n}{\psi}{h}}   &   &  X \ar[ddd]^-{f} \\
 &  \lsup{n}{X} \ar[ur]^-{\varphi_{n}} \ar[d]^-{\lsup{n}{f}} & \\
 &  \lsup{n}{{X'}} \ar[dr]^-{\varphi'_{n}} &\\
 \lsup{{nh}}{{X'}} \ar[ur]^-{\lsuprsub{n}{{\psi'}}{h}} \ar[rr]_-{\tau'_{nh}} & & X' \\
}
\end{equation}

In the above diagrams, the two triangles both commute by the definition of $\tau, \tau'$. The left and right trapezoids commute since $f$ is both an  $N$-equivariant and an $H$-equivariant morphism. Therefore, the rectangle commutes which shows that $f$ is a $G$-equivariant morphism. 


Next we show that $F$ is a tensor functor and preserves the braiding.

For simplicity, we will also write $((X,\varphi),\psi)$ as $(X, \varphi, \psi)$.
\begin{align*}
F((X,\varphi,\psi) \otimes (X',\varphi',\psi')) &= F(X \otimes X', \varphi \otimes \varphi', \psi \otimes \psi')\\ &= (X \otimes X', \{(\varphi \otimes \varphi')_n \lsuprsub{n}{{(\psi \otimes \psi')}}{h}\}_{nh = g \in G})\\ &= (X \otimes X', \tau_g \otimes \tau'_g)\\ &= F(X,\varphi,\psi) \otimes F(X',\varphi',\psi').
\end{align*}

Note that some of \lq\lq=" signs in the above equations actually represent canonical isomorphisms, such as $\lsup{h}{{(\lsup{g}{X})}} \cong \lsup{{hg}}{X}$. So, there is a canonical isomorphism from $F((X,\varphi,\psi) \otimes (X',\varphi',\psi'))$ to $F(X,\varphi,\psi) \otimes F(X',\varphi',\psi')$, and it is straightforward to check this isomorphism preserves the associativity and thus $F$ is a tensor functor. Recall that the $H$-crossed braiding on $(\cross{\B}{G})^{N}$ is given in Lemma \ref{lem:gauge N} as follows: for $(X,\varphi), (X', \varphi') \in (\cross{\B}{G})^{N}, X \in \B_{hn},$

$$X \otimes X' \overset{c}{\longrightarrow} \lsup{{hn}}{{X'}} \otimes X \overset{\lsuprsub{h}{{\varphi'}}{n} \otimes \operatorname{id}}{\longrightarrow} \lsup{h}{{X'}} \otimes X.$$
Therefore, for $(X,\varphi,\psi) , (X',\varphi',\psi') \in [\gauge{\B}{G}{N}]^{H}$, the braiding is given by the $H$-crossed braiding followed by $\psi'_h \otimes \operatorname{id}$, and is thus equal to the following compositions:
$$X \otimes X' \overset{c}{\longrightarrow} \lsup{{hn}}{{X'}} \otimes X \overset{\lsuprsub{h}{{\varphi'}}{n} \otimes \operatorname{id}}{\longrightarrow} \lsup{h}{{X'}} \otimes X \overset{\psi'_h \otimes \operatorname{id}}{ \longrightarrow} X' \otimes X.$$
The image of this composition under $F$ is $(\psi'_h \lsuprsub{h}{{\varphi'}}{n} \otimes \operatorname{id}) c = (\tau'_{hn} \otimes \operatorname{id})c$, which is exactly the braiding $(X,\tau) \otimes (X',\tau') \longrightarrow (X',\tau') \otimes (X,\tau)$ in $\gauge{\B}{G}{G}$. Therefore, $F$ is a braided tensor functor.

Conversely, given $(X,\tau) \in \gauge{\B}{G}{G},$ define $K(X,\tau):= ((X, \tau_{|_{N}}), \tau_{|_{H}}).$ It is clear that $((X, \tau_{|N}), \tau_{|H})$ satisfy (\ref{Equ: varphi}), (\ref{Equ:varphi psi}) and (\ref{equ:psi}).  Thus  $K(X,\tau)$ is an object of $\gauge{(\gauge{\B}{N}{N})}{H}{H}$. It is routine to check that $K$ is a braided tensor functor and that $FK \simeq \operatorname{Id}, KF \simeq \operatorname{Id}$.

\end{proof}

\end{thm}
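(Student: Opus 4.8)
The plan is to build an explicit braided tensor equivalence $F\colon \gauge{(\gauge{\B}{N}{N})}{H}{H}\to\gauge{\B}{G}{G}$ that "assembles" a two-stage equivariant structure into a single $G$-equivariant one, together with an inverse given by restriction. First I would set $\uu{\rho_1} := \uu{\rho}_{|_{N}}$ and invoke Lemma \ref{lem:rho1=rho} to identify the trivial $H$-component $\C_e = \bigoplus_{n\in N}\B_n$ of $\cross{\B}{G}$ with $\cross{\B}{N}$. Lemma \ref{lem:gauge N} then says that $\gauge{\B}{G}{N} = \bigoplus_{h\in H}\C_h^{N}$ is an $H$-crossed braided extension of $\C_e^{N} = \gauge{\B}{N}{N}$, so by Theorem \ref{ENO3712} it corresponds to a tri-homomorphism $\uu{\rho_2}\colon\uu{H}\to\uuPic{\gauge{\B}{N}{N}}$; this produces the promised data $\uu{\rho_1},\uu{\rho_2}$, and $[\gauge{\B}{G}{N}]^{H} = \gauge{(\gauge{\B}{N}{N})}{H}{H}$ is the object to be compared with $\gauge{\B}{G}{G}$.

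Next I would define $F$ on objects by $F((X,\varphi),\psi) := (X,\tau)$, where for $g = nh\in G$ (with $n\in N$, $h\in H$) one sets $\tau_g := \varphi_n\circ\lsuprsub{n}{\psi}{h} = \psi_h\circ\lsuprsub{h}{\varphi}{{h^{-1}nh}}$ — the two expressions agreeing by the mixed compatibility square \eqref{Equ:varphi psi} between the $N$-structure $\varphi$ and the $H$-structure $\psi$ — and on morphisms by $F(f):=f$. The substantive point is that $(X,\tau)$ is genuinely an object of $\gauge{\B}{G}{G}$, i.e. that $\tau$ obeys the $G$-equivariance cocycle identity in the form of \eqref{deltau}: writing $g_i = n_ih_i$, I would expand $\tau_{g_1}\circ\lsuprsub{g_1}{\tau}{g_2}$ using the $\varphi,\psi$ formula and then apply in succession the mixed relation \eqref{Equ:varphi psi} (to slide a $\psi$ past a $\varphi$), the $N$-cocycle relation \eqref{Equ: varphi}, and the $H$-cocycle relation \eqref{equ:psi}, until the composite collapses to $\tau_{g_1g_2}$; the coherence isomorphisms of the categorical actions are carried along and combine consistently (or are removed by strictifying). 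One also checks that a morphism which is simultaneously $N$- and $H$-equivariant is $G$-equivariant for the assembled structures, which is immediate from the two defining triangles for $\tau$.

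I would then verify that $F$ is monoidal and braided. Monoidality is a componentwise check: the underlying object of a tensor product is $X\otimes X'$, the assembled structure of $(\varphi\otimes\varphi',\psi\otimes\psi')$ is $\tau\otimes\tau'$, and the canonical isomorphisms $\lsup{h}{(\lsup{g}{X})}\cong\lsup{hg}{X}$ match the associativity constraints on both sides. For the braiding I would take the $H$-crossed braiding on $\gauge{\B}{G}{N}$ described in Lemma \ref{lem:gauge N}, postcompose with the equivariant half-braiding $\psi'_h\otimes\id$ that defines the braiding on the $H$-equivariantization, push the whole composite through $F$, and recognize the result as $(\tau'_{hn}\otimes\id)\circ c$, which is exactly the braiding of $(X',\tau')\otimes(X,\tau)$ in $\gauge{\B}{G}{G}$. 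Finally, the inverse is $K(X,\tau) := ((X,\tau_{|_{N}}),\tau_{|_{H}})$: restricting a $G$-equivariant structure to $N$ and to $H$ yields data satisfying \eqref{Equ: varphi}, \eqref{Equ:varphi psi} and \eqref{equ:psi} by specializing the $G$-cocycle identity to the relevant group elements, and $FK\simeq\id$, $KF\simeq\id$ up to the canonical coherence isomorphisms, so $F$ is a braided tensor equivalence.

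The main obstacle I anticipate is precisely the cocycle verification for $\tau$: keeping conjugated superscripts such as $\lsuprsub{{n_1h_1}}{\varphi}{{n_2}}$ and $\lsuprsub{{n_1h_1n_2}}{\psi}{{h_2}}$ correctly ordered, and making sure the action-coherence data (for $G$, and for the two sub-actions of $N$ and $H$) is inserted in the right places so that everything cancels. Once that identity is established, monoidality and compatibility with the two braidings are routine diagram chases, and the inverse functor requires no new ideas.
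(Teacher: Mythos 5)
Your proposal is correct and follows essentially the same route as the paper's proof: you assemble the $N$- and $H$-equivariant structures into a single $G$-equivariant structure via $\tau_{nh}=\varphi_n\circ\lsuprsub{n}{\psi}{h}$, verify the $G$-cocycle identity using the mixed compatibility square together with the $N$- and $H$-cocycle relations, check monoidality and the braiding componentwise, and invert by restriction. No substantive differences from the paper's argument.
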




\section{Obstructions}\label{section obstruction}

\subsection{$H^3$ obstruction}
Given a group homomorphism $\rho: G\to \Pic{\B}\cong \Aut{\B}$ a necessary condition for the existence of a 
gauging associated to $\rho$, is the existence of a lifting $\u{\rho}: \u{G}\to \u{\Aut{\B}}$ of $\rho$. So in this subsection we will describe some formulas for the computation of the $H^3$-obstruction associated with a group homomorphism $\rho: G\to \operatorname{Aut}_\otimes(\B)$.

Let $\C$ be a  fusion category and $$\widehat{K_0(\C)}=\{f:K_0(\C)\to U(1): f(X\otimes Y)=f(X)f(Y), \  \  \forall X, Y\in  \operatorname{Irr}(\B)\}.$$ Thus $\widehat{K_0(\C)}$ is an abelian group and for every tensor autoequivalence $F\in \operatorname{Aut}_\otimes(\C)$, the abelian group $\operatorname{Aut}_\otimes(\operatorname{Id}_F)$ can be canonically identified with $\widehat{K_0(\C)}$. 

Let  $\rho: G\to \operatorname{Aut}(\C)$ be a group homomorphism. Note that $G$ acts on $\widehat{K_0(\C)}$ since $G$ acts on $K_0(\C)$. Let us fix a representative tensor autoequivalence $F_g:\C\to \C$ for each $g\in G$ and a  tensor natural isomorphism $\theta_{g,h}: F_g\circ F_{h}\to F_{gh}$ for each pair $g,h \in G$, we can assume that $F_e=\operatorname{Id}_\C$ and $\theta_{g,e}=\theta_{e,g}= \operatorname{Id}_{F_g}$ for all $g\in G$. Define $O_3(\rho)\in Z^3(G, \widehat{K_0(\C)})$ by the diagram

\begin{equation}\label{definition 3-cocycle}
\begin{gathered} 
\xymatrixcolsep{5pc} \xymatrix{
F_g\circ F_h\circ F_l  \ar[dd]^{F_g(\theta_{h,l})} \ar[r]^{(\theta_{g,h})_{F_l}}& F_{gh}\circ F_l \ar[d]^{\theta_{gh,l}}\\
 &F_{ghl}\ar[d]^{O_3(\rho)(g,h,l)}\\
F_g\circ F_{hl} \ar[r]^{\theta_{g,hl}} &F_{ghl}.
 }
\end{gathered}
\end{equation}

The proof of the following proposition is straightforward, see \cite[Theorem 5.5]{Ga1}.

\begin{prop}\label{obstruction 3}
Let $\C$ be a fusion category and  $\rho: G\to \operatorname{Aut}_\otimes(\C) $ a group morphism. The cohomology class of the 3-cocycle  $O_3(\rho)$ defined by the diagram \eqref{definition 3-cocycle} only depends on $\rho$. The map $\rho$ lifts to an action $\underline{\rho}:\underline{G}\to \u{\operatorname{Aut}_\otimes(\C)}$ if and only if $0=[O_3(\rho)]\in H^3(G,\widehat{K_0(\C)} )$. In case $[O_3(\rho)]=0$ the set of equivalence classes of liftings of $\rho$ is a torsor over $H^2(G,\widehat{K_0(\C)})$.
\end{prop}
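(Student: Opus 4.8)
The plan is to verify first that $O_3(\rho)$ as defined by the pentagon-type diagram \eqref{definition 3-cocycle} is genuinely a normalized $3$-cocycle in $Z^3(G,\widehat{K_0(\C)})$, then to show its class is independent of the chosen representatives $(F_g,\theta_{g,h})$, and finally to identify the obstruction-theoretic meaning of this class. For the cocycle condition, I would write down the standard $3\times 3$ hexagonal/pentagonal diagram obtained from four composable autoequivalences $F_g\circ F_h\circ F_k\circ F_l$: there are two ways to reassociate the composite $F_g F_h F_k F_l \to F_{ghkl}$ using the $\theta$'s, and comparing them produces exactly the relation $\delta O_3(\rho)=0$, i.e.
\[
O_3(\rho)(h,k,l)\,O_3(\rho)(g,hk,l)\,O_3(\rho)(g,h,k) \;=\; {}^{g}O_3(\rho)(h,k,l)^{-1}\cdots
\]
wait—more precisely the alternating product of the five faces of the pentagon of $\theta$'s equals $1$ because $\Hom$-spaces of natural isomorphisms of the identity functor compose by multiplication in the abelian group $\widehat{K_0(\C)}$, and the $G$-action appears through the term $F_g(\theta_{h,l})$-type face (which is where ${}^{g}(-)$ enters). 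The normalization $F_e=\operatorname{Id}$, $\theta_{g,e}=\theta_{e,g}=\operatorname{id}$ immediately gives $O_3(\rho)(g,h,l)=1$ whenever one argument is $e$.

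Next, independence of the class: given two choices $(F_g,\theta_{g,h})$ and $(F_g',\theta_{g,h}')$ of representatives, pick tensor natural isomorphisms $\eta_g: F_g\to F_g'$ with $\eta_e=\operatorname{id}$ (possible since $F_g$ and $F_g'$ represent the same class in $\Aut{\C}$); the discrepancy $\theta_{g,h}'\circ(\eta_g\ast\eta_h)$ versus $\eta_{gh}\circ\theta_{g,h}$ defines an element $c(g,h)\in\widehat{K_0(\C)}$, and tracing this through the diagram \eqref{definition 3-cocycle} shows $O_3'(\rho)=O_3(\rho)\cdot\delta c$, so $[O_3(\rho)]\in H^3(G,\widehat{K_0(\C)})$ is well defined. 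This is a routine diagram chase using only that everything in sight is a morphism between iterates of $\operatorname{Id}_\C$, hence lives in the abelian group $\widehat{K_0(\C)}$, and that the $G$-action on this group is compatible with applying $F_g$ to such morphisms.

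For the lifting statement, recall that a lift $\u\rho:\u G\to\u{\operatorname{Aut}_\otimes(\C)}$ is precisely a choice of $(F_g,\theta_{g,h})$ making diagram \eqref{definition 3-cocycle} commute with the bottom arrow equal to the identity, i.e. with $O_3(\rho)\equiv 1$ as an honest cocycle (not just cohomologically trivial); conversely, if $[O_3(\rho)]=0$ then writing $O_3(\rho)=\delta c$ and twisting $\theta_{g,h}\rightsquigarrow c(g,h)^{-1}\theta_{g,h}$ (using that scalars act on natural transformations of $\operatorname{Id}_\C$) kills the cocycle, producing a strict lift—this is the $2$-categorical analogue of the classical fact that a group extension problem with vanishing $H^3$-obstruction admits a solution. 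Finally, when $[O_3(\rho)]=0$, two lifts differ by the data of natural isomorphisms $\eta_g$ as above with a well-defined $2$-cocycle $c(g,h)\in Z^2(G,\widehat{K_0(\C)})$, and changing the $\eta_g$'s by coboundaries shows the set of lifts up to equivalence is a torsor over $H^2(G,\widehat{K_0(\C)})$. The main obstacle is the first step: carefully setting up the four-functor diagram so that the five $\theta$-faces assemble into the coboundary $\delta O_3(\rho)$ with the correct placement of the $G$-action twist, since the noncommutativity of functor composition means one must track which face gets hit by $F_g(-)$; but since this is "straightforward" per \cite[Theorem 5.5]{Ga1}, I would state it and refer there for the bookkeeping rather than reproduce it in full.
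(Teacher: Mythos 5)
Your proposal is correct and follows the standard obstruction-theoretic argument; the paper itself omits the proof entirely, stating only that it is straightforward and citing \cite[Theorem 5.5]{Ga1}, which proceeds exactly along the lines you sketch (cocycle condition from the pentagon of four composable autoequivalences with the $G$-action entering via $F_g(-)$, well-definedness via comparison isomorphisms $\eta_g$, vanishing of the class as the lifting criterion, and the $H^2$-torsor of liftings). The only point worth tightening is your observation that a lift requires $O_3\equiv 1$ on the nose for \emph{some} choice of $\theta_{g,h}$, which you correctly reconcile with cohomological triviality by twisting $\theta_{g,h}$ by a cochain $c(g,h)^{-1}$ with $\delta c = O_3(\rho)$.
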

\qed

\begin{rem}
An analogous result holds if $\B$ is a braided fusion category and $\rho: G\to \Aut \B$. In this case there is a third cohomology class $O_3(\rho) \in H^3(G, \widehat{K_0(\B)})$ and equivalence classes of liftings of $\rho$ form a torsor over $H^2(G,\widehat{K_0(\B)})$.
\end{rem}

\subsubsection{Obstruction for pointed braided fusion categories}

Let $\B= \operatorname{Vec}_A^{\omega,c}$ be a braided pointed fusion category. The map 
\begin{align*}
q:A &\to U(1)\\
a &\mapsto c(a,a)
\end{align*}
is a quadratic form and the pair $(A,q)$, called a pre-metric group, is a complete invariant of the equivalence class of $\B$, \cite{JS,DGNO2}. We will denote by $O(A,q)$ the group of all group automorphisms of $A$ that fix $q$.

A braided autoequivalence $(\rho,\psi):\B\to \B$ is defined by a group isomorphism $\rho:A\to A$ and 2-cochain $\psi\in C^2(A,U(1))$ such that 

\begin{align}
\frac{\omega(a,b,c)}{\omega(\rho(a),\rho(b),\rho(c))} &= \frac{\psi(b,c)\psi(a,bc)}{\psi(ab,c)\psi(a,b)} \label{eq 1}\\
\frac{c(a,b)}{c(\rho(a),\rho(b))}&= \frac{\psi(a,b)}{\psi(b,a)} \label{eq 2}
\end{align}for all $a,b,c\in A$. 
Note that for every braided tensor autoequivalence $(\rho,\psi)$, $\rho\in O(A,q)$. Conversely, for every $\rho \in O(A,q)$ there is $\psi\in C^2(A,U(1))$ such that $(\rho,\psi)$ is a braided autoequivalence and the tensor functor  $(\rho,\psi)$ is unique up to tensor equivalence, \cite{JS}. 

Given a group homomorphism $\rho: G\to O(A,q)$,  let us fix for every $g\in G$ a 2-cochain $\psi_g\in C^2(A,U(1))$ such that $(\rho(g),\psi_g)\in \Aut \B$, that is a map $$\psi: G\to C^2(A,U(1)),$$ such that $\psi_g$ satisfies the equations \eqref{eq 1} and \eqref{eq 2} for each $g\in G$.
For every pair $g, h \in G$ fix a tensor natural isomorphism $\theta(g,h): (\rho(g),\psi_g)\circ (\rho(h),\psi_h)\to (\rho(gh) ,\psi_{gh})$, 
that is a map $$\theta: G\times G \to C^1(A,U(1))$$ such that

$$\delta_{G}(\psi)= \delta_A(\theta)^{-1}$$
Now, define 
\begin{equation}\label{obstruction 3 pointed}
O_3(\rho):=\delta_{G}(\theta),
\end{equation} 
then $O_3(\rho)\in Z^3(G,C^1(A,U(1))).$ But since 
\begin{align}
\delta_A(O_3(\rho)) &= \delta_A \Big (\delta_G(\theta) \Big )\notag\\
&= \delta_G \Big (\delta_A(\theta) \Big )\notag\\
&= \delta_G\Big (\delta_G(\psi)^{-1} \Big )=1,\notag
\end{align}
thus $O_3(\rho)\in Z^3(G,\widehat{A}).$ The cohomology class of $O_3(\rho)$ is just the cohomology of Proposition \ref{obstruction 3}.

We summarize the results in the following proposition:

\begin{prop}
Let $\B$ be a pointed braided fusion category with associated pre-metric groups $(A,q)$. Then 

\begin{itemize}
\item $\Aut{\B}= O(A,q)$.
\item A representative 3-cocycle for the $H^3$-obstruction is given by formula \eqref{obstruction 3 pointed}
\end{itemize}
\end{prop}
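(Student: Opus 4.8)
The plan is to derive both statements from the explicit description of braided autoequivalences of a pointed braided fusion category recalled above, together with the classification result of \cite{JS}. For the first bullet, I would consider the forgetful map $\Pi\colon \Aut{\B}\to O(A,q)$, $(\rho,\psi)\mapsto \rho$. First I would check it is well defined: a braided tensor natural isomorphism $(\rho_1,\psi_1)\Rightarrow(\rho_2,\psi_2)$ is in particular an isomorphism of the underlying (non-monoidal) functors, and on $\operatorname{Vec}_A^{\omega,c}$ such a functor is determined on objects by a set map $A\to A$, hence $\rho_1=\rho_2$. That $\rho\in O(A,q)$ is forced by \eqref{eq 2} specialized to $b=a$, which gives $q(a)=c(a,a)=c(\rho(a),\rho(a))=q(\rho(a))$. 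The map $\Pi$ is a group homomorphism because the composite of $(\rho_1,\psi_1)$ and $(\rho_2,\psi_2)$ has underlying group automorphism $\rho_1\rho_2$. Surjectivity and injectivity of $\Pi$ are exactly the facts recalled from \cite{JS}: every $\rho\in O(A,q)$ admits a $\psi$ making $(\rho,\psi)$ a braided autoequivalence, and $(\rho,\psi)$ is unique up to tensor equivalence, so the $\psi$-ambiguity collapses in $\Aut{\B}$. Hence $\Pi$ is an isomorphism of groups.

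For the second bullet, I would unwind the diagram \eqref{definition 3-cocycle} in the pointed setting with the choices $F_g=(\rho(g),\psi_g)$ and $\theta_{g,h}=\theta(g,h)$. On $\operatorname{Vec}_A^{\omega,c}$, since $\rho\colon G\to O(A,q)$ is a homomorphism, the functors $F_g\circ F_h\circ F_l$, $F_{gh}\circ F_l$, $F_g\circ F_{hl}$ and $F_{ghl}$ all agree on objects (sending $\delta_a$ to $\delta_{\rho(ghl)(a)}$), so every arrow in \eqref{definition 3-cocycle} acts on the simple object $\delta_a$ as multiplication by a scalar in $U(1)$; in particular $\theta(g,h)$ is recorded by a $1$-cochain $\theta(g,h)\in C^1(A,U(1))$, its $F_l$-component at $\delta_a$ is $\theta(g,h)\bigl(\rho(l)(a)\bigr)$, and $F_g(\theta_{h,l})$ at $\delta_a$ is $\theta(h,l)(a)$. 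Composing around the square and solving for the remaining scalar gives, at $\delta_a$,
\[
O_3(\rho)(g,h,l)(a)=\frac{\theta(g,h)\bigl(\rho(l)(a)\bigr)\,\theta(gh,l)(a)}{\theta(g,hl)(a)\,\theta(h,l)(a)},
\]
which is precisely the value at $a$ of the $G$-differential $\delta_{G}(\theta)(g,h,l)$ for the $G$-module structure on $C^1(A,U(1))$ induced by $\rho$. This is exactly formula \eqref{obstruction 3 pointed}. Since $\delta_A\bigl(\delta_G(\theta)\bigr)=\delta_G\bigl(\delta_A(\theta)\bigr)=\delta_G\bigl(\delta_G(\psi)^{-1}\bigr)=1$ (as already noted in the text), this $3$-cochain takes values in $\widehat{A}$, in agreement with Proposition \ref{obstruction 3}, and by construction its cohomology class is the $H^3$-obstruction of that proposition.

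The main obstacle—really the only point requiring care—is the bookkeeping in the dictionary between monoidal data on $\operatorname{Vec}_A^{\omega,c}$ and cochains on $A$: identifying tensor functors with pairs $(\rho,\psi)$, identifying tensor natural isomorphisms with $1$-cochains, and pinning down the $G$-action on $C^\bullet(A,U(1))$ induced by $\rho$ (together with the attendant variance and sign conventions) so that the scalar extracted from \eqref{definition 3-cocycle} matches the standard formula for $\delta_G(\theta)$. One also has to confirm that the natural automorphism produced by \eqref{definition 3-cocycle} is a \emph{tensor} automorphism of $\operatorname{Id}_{F_{ghl}}$, i.e.\ lies in $\widehat{A}$ rather than in all of $C^1(A,U(1))$; this is precisely the identity $\delta_A(\delta_G\theta)=1$ verified above. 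Beyond this essentially combinatorial check, nothing new is needed past Proposition \ref{obstruction 3}, equations \eqref{eq 1}--\eqref{eq 2}, and the classification of braided autoequivalences of pointed braided fusion categories from \cite{JS}.
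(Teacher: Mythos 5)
Your proposal is correct and takes essentially the same route as the paper: there the proposition carries no separate proof (it is followed by \qed and explicitly ``summarizes'' the preceding discussion), namely the characterization of braided autoequivalences by pairs $(\rho,\psi)$ subject to \eqref{eq 1}--\eqref{eq 2} together with the existence/uniqueness statement from \cite{JS} for the first bullet, and the definition $O_3(\rho)=\delta_G(\theta)$ with the check $\delta_A(\delta_G(\theta))=1$ for the second, all of which your argument simply writes out in detail. The only (harmless) discrepancy is an orientation convention in extracting the scalar from diagram \eqref{definition 3-cocycle} --- your displayed cochain is the inverse of the one the diagram as drawn yields --- which you already flag and which does not affect the cohomological conclusion.
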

\qed
\begin{cor}\label{sufficient cond for trivial 3-obs}
Let $c:A\times A\to U(1)$ be a bicharacter and $\B=\operatorname{Vec}_A^{c}$ the associated pointed braided fusion category. Let $$O(A,c)=\{g\in \operatorname{Aut}(A): c(a,b)= c(g(a),g(b))\  \forall a,b, \in A\}.$$ Then every group homomorphism $\rho: G\to O(A,c) \subset O(A,q)$ has trivial $H^3$-obstruction. 
\end{cor}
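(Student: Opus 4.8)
The plan is to reduce Corollary \ref{sufficient cond for trivial 3-obs} to the general formula \eqref{obstruction 3 pointed} by exhibiting an \emph{explicit} canonical choice of the auxiliary data $\psi$ and $\theta$ whenever $\rho$ lands in $O(A,c)$. The key observation is that when $\B = \operatorname{Vec}_A^{c}$ has trivial associator $\omega = 1$ and $\rho(g)$ preserves the bicharacter $c$ on the nose, equations \eqref{eq 1} and \eqref{eq 2} are both solved by the trivial cochain: taking $\psi_g = 1$ for all $g \in G$, equation \eqref{eq 1} reads $1/1 = 1$ (since $\omega = 1$ and $\omega \circ \rho(g)^{\otimes 3} = 1$), and equation \eqref{eq 2} reads $c(a,b)/c(\rho(g)(a),\rho(g)(b)) = 1$, which holds exactly because $\rho(g) \in O(A,c)$. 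So $(\rho(g), 1)$ is an honest braided autoequivalence for every $g$.

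First I would record that with this choice $\delta_G(\psi) = 1$ (constant map at the identity cochain), so the defining equation $\delta_G(\psi) = \delta_A(\theta)^{-1}$ becomes $\delta_A(\theta) = 1$, i.e. for each pair $(g,h)$ the natural isomorphism $\theta(g,h) \in C^1(A, U(1))$ must merely be a character of $A$; the evident canonical choice is $\theta(g,h) = 1$ for all $g,h$ (this is legitimate because $\rho$ is a genuine group homomorphism $G \to O(A,q)$, so $(\rho(g),1)\circ(\rho(h),1)$ and $(\rho(gh),1)$ have the same underlying group map $\rho(g)\rho(h) = \rho(gh)$ and the same trivial cochain, hence the identity is a tensor natural isomorphism between them). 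Then by the definition \eqref{obstruction 3 pointed}, $O_3(\rho) = \delta_G(\theta) = \delta_G(1) = 1$, so the representative $3$-cocycle is trivial, and a fortiori its cohomology class in $H^3(G, \widehat{A})$ vanishes.

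Finally I would invoke the previous proposition (the one immediately preceding the corollary) to conclude: the cohomology class of the cocycle \eqref{obstruction 3 pointed} computed from \emph{any} admissible choice of $\psi$ and $\theta$ equals the $H^3$-obstruction $[O_3(\rho)]$ of Proposition \ref{obstruction 3}, and we have exhibited a choice for which this cocycle is identically $1$; hence $[O_3(\rho)] = 0$ and $\rho$ lifts to a categorical action. One should also note the harmless point that $O(A,c) \subseteq O(A,q)$, since any $g$ preserving the bicharacter $c$ automatically preserves the quadratic form $q(a) = c(a,a)$; this justifies the inclusion asserted in the statement and makes the hypothesis $\rho : G \to O(A,c)$ a special case of the setup of the proposition.

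The only mildly delicate point — and the one I would be most careful about — is checking that the identity really is a valid tensor natural isomorphism $\theta(g,h)$, i.e. that the monoidal structures on the composite autoequivalence $(\rho(g),1)\circ(\rho(h),1)$ and on $(\rho(gh),1)$ genuinely coincide rather than merely being cohomologous; since both are built from trivial $2$-cochains and $\rho$ is strictly a homomorphism at the level of $O(A,q)$, there is no coherence discrepancy, but this is the step where one must be honest that we are producing a strict (not just cohomologically trivial) $\theta$, which is what forces $O_3(\rho) = \delta_G(\theta) = 1$ as an actual cocycle.
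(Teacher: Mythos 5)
Your proof is correct and is essentially the paper's own argument: the paper likewise takes $\psi_g=1$ (valid since $\rho(g)$ preserves $c$ and the associator is trivial) and $\theta_{g,h}=1$, obtaining a strict categorical action with $O_3(\rho)=\delta_G(\theta)=1$. You have merely spelled out the verification of equations \eqref{eq 1}, \eqref{eq 2} and the strictness of $\theta$ in more detail than the paper does.
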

\begin{proof}
Since $\rho(g)\in (A,c)$, then $(\rho(g),1)\in \Aut{\operatorname{Vec}_A^{c}}$ and $\theta_{g,h}=1$ define a canonical  categorical action $\rho:\u{G}\to \u{\Aut\B}$.
\end{proof}
\begin{cor}
If $A$ is an abelian group of odd order then  for every group homomorphism $\rho: G\to O(A,q)$ the obstruction $O_3(\rho)$ vanishes. 
\end{cor}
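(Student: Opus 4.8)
The plan is to reduce the statement to Corollary \ref{sufficient cond for trivial 3-obs} by showing that, when $|A|$ is odd, $\B$ admits a presentation $\operatorname{Vec}_A^{c}$ in which $c$ is an honest (associator‑free) symmetric bicharacter, and then observing that a homomorphism into $O(A,q)$ is automatically a homomorphism into $O(A,c)$.

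First I would record the relevant numerology. Write $n:=|A|$, so $n$ is odd. Every quadratic form $q\colon A\to U(1)$ then takes values in the group $\mu_n$ of $n$‑th roots of unity: from $q(na)=1$ and the identity $q(ka)=q(a)^{k^{2}}$ one gets $q(a)^{2n}=q(a)^{n^{2}}=1$, hence $q(a)^{n}=1$ since $\gcd(2n,n^{2})=n$. Moreover squaring is an automorphism of every finite abelian group of odd order, in particular of $\mu_n$ and of the abelian group $\operatorname{Hom}(A\otimes_{\Z}A,\mu_n)$ of $\mu_n$‑valued bicharacters.

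Next I would construct $c$. Let $b(a,b):=q(ab)\,q(a)^{-1}q(b)^{-1}$ be the symmetric bicharacter (the monodromy form) associated to $q$; it is $\mu_n$‑valued and satisfies $b(a,a)=q(a^{2})\,q(a)^{-2}=q(a)^{4}q(a)^{-2}=q(a)^{2}$. Using that squaring is bijective on $\mu_n$‑valued bicharacters, let $c$ be the unique $\mu_n$‑valued bicharacter with $c(a,b)^{2}=b(a,b)$ for all $a,b$; it is automatically symmetric, because $\bigl(c(a,b)/c(b,a)\bigr)^{2}=b(a,b)/b(b,a)=1$ and $\mu_n$ has no $2$‑torsion. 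For each $a$ we then have $c(a,a)^{2}=b(a,a)=q(a)^{2}$ with $c(a,a),q(a)\in\mu_n$, so $c(a,a)=q(a)$ since squaring is injective on $\mu_n$. Therefore the pre‑metric group of the braided pointed category $\operatorname{Vec}_A^{c}$ (trivial associator, braiding $c$) is exactly $(A,q)$; since the pre‑metric group is a complete invariant of the braided equivalence class, $\B\simeq\operatorname{Vec}_A^{c}$, and this equivalence identifies the $H^{3}$‑obstruction of symmetries of $\B$ with that of symmetries of $\operatorname{Vec}_A^{c}$. Finally I would compare automorphism groups: by the proposition above $\Aut{\B}=O(A,q)$, and an automorphism $g$ of $A$ fixes $q$ if and only if it fixes $b$ (which is built from $q$), if and only if it fixes $c$ (by uniqueness of the square root); hence $O(A,q)=O(A,c)$. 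Thus $\rho$ is a homomorphism $G\to O(A,c)$, and Corollary \ref{sufficient cond for trivial 3-obs} gives $O_3(\rho)=0$.

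The only genuine subtlety is the bookkeeping of which roots of unity appear — one must verify that $q$, $b$, and $c$ are all $\mu_n$‑valued so that the square‑root trick is legitimate — and I would emphasize that the argument never touches the associator $\omega$ of the original presentation of $\B$, since the monodromy bicharacter of $q$ coincides with the bilinear form of $q$ irrespective of $\omega$, so the construction of $c$ is unaffected. Everything else is formal.
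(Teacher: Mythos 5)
Your argument is correct and is essentially the paper's proof: the paper likewise reduces to Corollary \ref{sufficient cond for trivial 3-obs} by producing, for odd $|A|$, a symmetric bicharacter $c$ with $c(a,a)=q(a)$ and $O(A,q)=O(A,c)$. You have merely spelled out the standard square-root construction of $c$ from the monodromy form (and the verification that everything is $\mu_n$-valued), which the paper asserts without detail.
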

\begin{proof}
If $A$ has odd order and $q$ is a quadratic form, then there is a symmetric bicharacter $c_q:A\times A\to U(1)$ such that $q(a)=c_q(a,a)$ and  $O(A,q)=O(A,c_q)$. Thus by Corollary \ref{sufficient cond for trivial 3-obs} the $H^3$-obstruction vanishes. 
\end{proof}

\subsection{$H^4$-obstruction}

As we mentioned in Section \ref{gauging}, by Theorem \ref{ENO3712}, faithfully graded $G$-crossed braided fusion categories are in one-one correspondence with  tri-homomorphisms $\uu{\rho}: \uu{G}\rightarrow \uu{\Pic \B}$, or equivalently with maps between their classifying spaces $ \text{BG} \to \text{B}\uuPic{B}$. We 
are interested in the case that $\B$ is modular, so the first truncation of $\uuPic{B}$, denoted as $\uPic{B}$, is monoidal equivalent to $\u{\Aut{\B}}$, \cite[Theorem 5.2]{ENO3}.

\subsection{Obstruction theory for quasi-trivial extensions}
A $G$-graded fusion category $\C=\oplus_{g\in G}\C_g$ is called a quasi-trivial extension of $\C_e$ by $G$, if each homogeneous component $\C_g$ has at least one multiplicatively invertible object.
Let us recall briefly the classification of quasi-trivial extensions, given in \cite{G}. For a fusion category $\C$, we shall denote by $\underline{\underline{\operatorname{Out}_\otimes(\C)}}$ the 3-group of outer autoequivalences:

\begin{itemize}
\item objects are tensor autoequivalence of $\C$,
\item 1-morphisms are pseudo-natural transformations,
\item 2-morphisms are modifications.
\end{itemize}
The main result of \cite{G} is a one-to-one correspondence between equivalence classes of quasi-trivial extensions and equivalence classes of homomorphism of 3-groups $\underline{\underline{\rho}}:\underline{\underline{G}}\to \underline{\underline{\operatorname{Out}_\otimes(\C)}}$, where $\underline{\underline{G}}$ is the discrete 3-category where objects are the elements of $G$. Explicitly a datum for a tri-homomorphism corresponds to 

\begin{itemize}
  \item tensor autoequivalences $(g_*,\psi^{g}): \C\to \C$ for all $g \in G$,
  \item pseudonatural isomorphisms $(\omega(g,h),\chi_{g,h}): g_*\circ h_*\to (gh)_*$ for all $g,h  \in G$,
  \item invertible modifications $\eta_{g,h,l}: \chi_{g,hl}\overline{\circ} (\id_{g_*} \overline {\otimes}\chi_{h,l}) \to \chi_{gh,l}\overline{\circ} (\chi_{g,h}\overline{\otimes}\id_{l_*})$ for all $g,h,l \in G$.
\end{itemize}
such that the diagram in Figure \ref{coherencia} commutes for all $g,h,k,l \in G$ (where tensor symbols among objects and arrows have been omitted).

\begin{figure}
\includegraphics{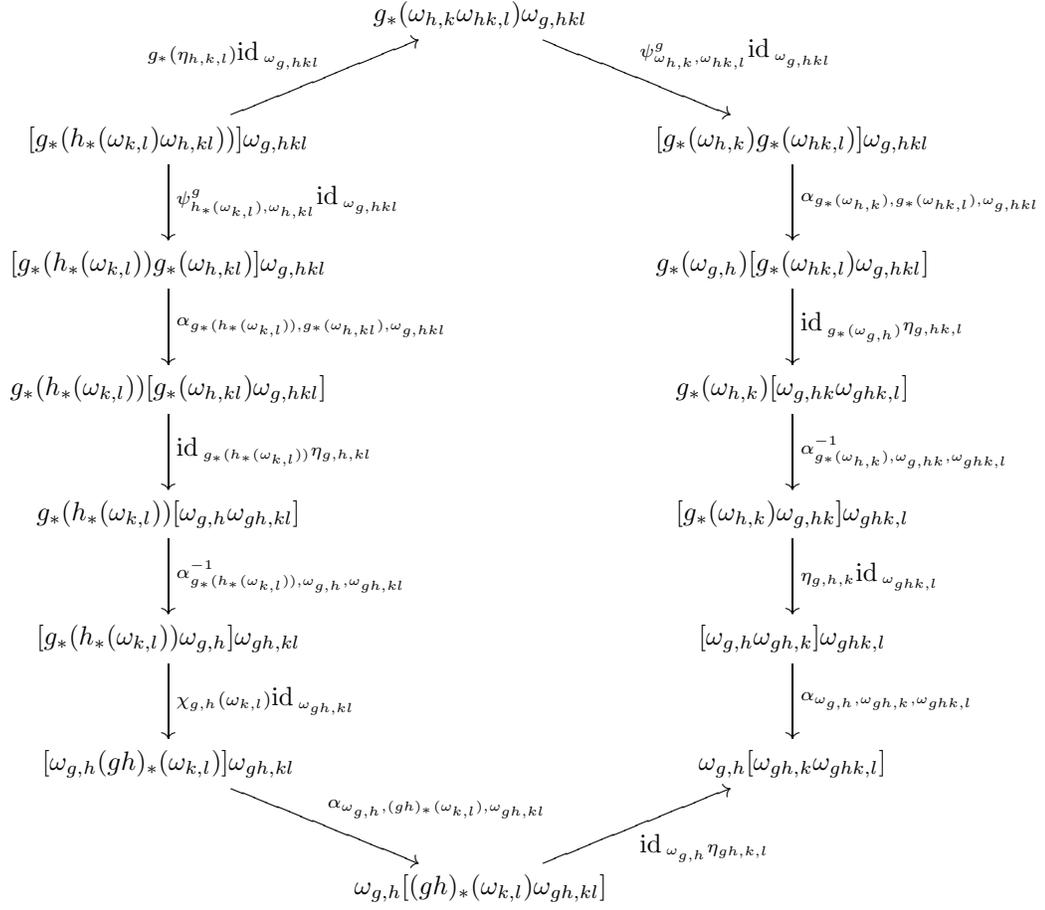}
\caption{coherence for tri-homomorphism}\label{coherencia}
\end{figure}

Let $\underline{\rho}:\underline{G}\to \underline{\operatorname{Out}_\otimes(\C)}$ be a monoidal functor, the  obstruction to the existence of a lifting $\underline{\underline{\rho}}:\underline{\underline{G}}\to \underline{\underline{\operatorname{Out}_\otimes(\C)}}$ is an element in $H^4(G,U(1))$, defined by the next formula:

\begin{align}
O_4(\u\rho)(g_1,g_2,g_3,g_4)=&\chi_{g_1,g_2}(\omega_{g_3,g_4}) \label{formula 2}\\
&\psi^{g_1}_{(g_2)_*(\omega_{g_3,g_4}),\omega_{g_2,g_3g_4}}\notag\\ &(\psi^{g_1}_{\omega_{g_2,g_3},\omega_{g_{2}g_{3},g_{4}}})^{-1} \notag\\
& \alpha_{(g_1)_*((g_2)_*(\omega_{g_3,g_4})),(g_1)_*(\omega_{g_2,g_{3}g_{4}}),\omega_{g_{1},g_{2}g_{3}g_{4}}}\notag\\
&\alpha_{(g_1)_*((g_2)_*(\omega_{g_3,g_4})),\omega_{g_1,g_2},\omega_{g_1g_2,g_3g_4}}^{-1} \notag\\
&\alpha_{\omega_{g_1,g_2},(g_1g_2)_*(\omega_{g_3,g_4}),\omega_{g_1g_2,g_3g_4}}\notag\\
& \alpha_{\omega_{g_1,g_2},\omega_{g_1g_2,g_3},\omega_{g_1g_2g_3,g_4}}^{-1}\notag\\
&\alpha_{(g_1)_*(\omega_{g_2,g_3}),\omega_{g_1,g_2g_3},\omega_{g_1g_2g_3,g_4}}\notag\\
&\alpha_{(g_1)_*(\omega_{g_2,g_3}),(g_1)_*(\omega_{g_2g_3,g_4}), \omega_{g_1,g_2g_3g_4}}^{-1}\notag
\end{align}

The function $O_4(\u\rho): G^{\times 4}\to U(1)$ is a 4-cocycle and its cohomology class only depends on the equivalence class of  $\underline{\rho}: \underline{G}\to \underline{\operatorname{Out}_\otimes(\C)}$.

\subsection{Quasi-trivial extension of a group by a braided fusion category}

\begin{definition}
Following \cite{DGNO2}, we say that a quasi-trivial $G$-extension $\B$ is a braided quasi-trivial extension of $G$ if $\B_e$ is a BFC and for each $g\in G$, the tensor autoequivalence
\begin{align*}
\operatorname{Ad}_{X_g}: \B_e&\to \B_e\\
W &\mapsto (X_g\otimes W)\otimes X_g^*
\end{align*}
is a braided equivalence for all invertible objects in $\B_g$ and all $g\in G$.
\end{definition}

If $\B$ is a braided tensor category every inner tensor autoequivalence is naturally isomorphic to the identity functor, so every monoidal functor $ \underline{G}\to \underline{\operatorname{Out}_\otimes(\B)}$, defines a unique monoidal functor $ \underline{G}\to \underline{\operatorname{Aut}_\otimes(\B)}$. 
A monidal functor $\rho: \underline{G}\to \underline{\operatorname{Out}_\otimes(\B)}$ is a lifting of  $\tau: \underline{G}\to \underline{\operatorname{Aut}_\otimes(\B)}$ if $\tau$ is the functor obtained from $\rho$.

Although $\pi_1(\underline{\operatorname{Aut}_\otimes(\B)})=\pi_1(\underline{\operatorname{Out}_\otimes(\B)})$, they are different categorical groups since $\pi_2(\underline{\operatorname{Aut}_\otimes(\B)})=\operatorname{Aut}(\operatorname{Id}_{\B})$ and $\pi_2(\underline{\operatorname{Out}_\otimes(\B)})=\Inv(\mathcal{Z}(\B))$.

Since $(\B,c)$ is braided, the inclusion 
\begin{align*}
\Inv(\B) &\to \Inv(\mathcal{Z}(\B))\\
V &\mapsto (V, c_{-,V}),
\end{align*}
is a splitting of the exact sequence $$0\to \operatorname{Aut}_\otimes(Id_\B)\to \Inv(\mathcal{Z}(\B)) \to \Inv(\B)\to 0,$$so  $\Inv(\mathcal{Z}(\B))=\operatorname{Aut}_\otimes(\operatorname{Id}_\B)\oplus \Inv(\B).$ Moreover, the action of  $\operatorname{Aut}_\otimes^{br} (\B)$ on $\Inv(\mathcal{Z}(\B))=\operatorname{Aut}_\otimes(\operatorname{Id}_\B)\oplus \Inv(\B)$  is compatible with the direct sum, that is $$\Inv(\mathcal{Z}(\B))=\operatorname{Aut}_\otimes(\operatorname{Id}_\B)\oplus \Inv(\B)$$ as  $\operatorname{Aut}_\otimes^{br} (\B)$-module. 

Summarizing the above discussion, we have:
\begin{prop}
There is a bijective correspondence between equivalence classes of liftings $\underline{G}\to \underline{\operatorname{Out}_\otimes^{br}(\B)}$ of a fix monoidal functor $\u{\rho}:\underline{G}\to \underline{\operatorname{Aut}_\otimes^{br}(\B)}$ and elements in $H^2(G,\Inv(\B))$.
\end{prop}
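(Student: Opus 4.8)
The plan is to identify liftings $\underline{G}\to \underline{\operatorname{Out}^{br}_\otimes(\B)}$ of the fixed monoidal functor $\underline{\rho}:\underline{G}\to \underline{\operatorname{Aut}^{br}_\otimes(\B)}$ with a standard obstruction-theoretic torsor and then evaluate that torsor using the splitting established just above the statement. Since $\B$ is braided, every inner autoequivalence is trivial, so the categorical groups $\underline{\operatorname{Out}^{br}_\otimes(\B)}$ and $\underline{\operatorname{Aut}^{br}_\otimes(\B)}$ have the same $\pi_1$ (namely $\operatorname{Aut}^{br}_\otimes(\B)$), but different $\pi_2$: $\pi_2(\underline{\operatorname{Aut}^{br}_\otimes(\B)})=\operatorname{Aut}_\otimes(\operatorname{Id}_\B)$ while $\pi_2(\underline{\operatorname{Out}^{br}_\otimes(\B)})=\Inv(\mathcal Z(\B))$. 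The natural map $\underline{\operatorname{Out}^{br}_\otimes(\B)}\to\underline{\operatorname{Aut}^{br}_\otimes(\B)}$ is the identity on $\pi_1$ and, on $\pi_2$, is the surjection $\Inv(\mathcal Z(\B))\twoheadrightarrow\Inv(\B)$ from the exact sequence $0\to\operatorname{Aut}_\otimes(\operatorname{Id}_\B)\to\Inv(\mathcal Z(\B))\to\Inv(\B)\to 0$.

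First I would set up the comparison at the level of classifying spaces. A monoidal functor $\underline{G}\to\underline{\mathcal G}$ for a categorical group $\underline{\mathcal G}$ is a based map $BG\to B\underline{\mathcal G}$, and $B\underline{\mathcal G}$ has $\pi_1=\pi_1(\underline{\mathcal G})$, $\pi_2=\pi_2(\underline{\mathcal G})$. The fibration $B\underline{\operatorname{Out}^{br}_\otimes(\B)}\to B\underline{\operatorname{Aut}^{br}_\otimes(\B)}$ has homotopy fiber a $K(\operatorname{Aut}_\otimes(\operatorname{Id}_\B),2)$, since on $\pi_2$ the map is the surjection above with kernel $\operatorname{Aut}_\otimes(\operatorname{Id}_\B)$ and it is an isomorphism on $\pi_1$. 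Therefore, given the fixed lift-or-rather the fixed map $\underline\rho$, obstruction theory for lifting a map $BG\to B\underline{\operatorname{Aut}^{br}_\otimes(\B)}$ through a principal-type fibration with fiber $K(\operatorname{Aut}_\otimes(\operatorname{Id}_\B),2)$ says: there is a single obstruction in $H^3(G,\operatorname{Aut}_\otimes(\operatorname{Id}_\B))$ (with the module structure induced by $\rho$) to the existence of a lift, and when it vanishes the set of lifts up to homotopy is a torsor over $H^2(G,\operatorname{Aut}_\otimes(\operatorname{Id}_\B))$. The key point that makes the obstruction vanish is that the map $\underline\rho$ we are lifting is itself already given — in other words the $k$-invariant pulled back along $\underline\rho$ must already be trivial because $\underline\rho$ exists as a map to $B\underline{\operatorname{Aut}^{br}_\otimes(\B)}$ and the obstruction for the existence of any lift of this particular $\underline\rho$ is precisely $\underline\rho^*$ of the relevant $k$-invariant difference; since the splitting $\Inv(\B)\hookrightarrow\Inv(\mathcal Z(\B))$, $V\mapsto(V,c_{-,V})$, splits the extension \emph{as $\operatorname{Aut}^{br}_\otimes(\B)$-modules}, the fibration $B\underline{\operatorname{Out}^{br}_\otimes(\B)}\to B\underline{\operatorname{Aut}^{br}_\otimes(\B)}$ admits a section over the image of $\underline\rho$, so a lift always exists and the obstruction is identically zero. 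Concretely, the section is: pick representatives $F_g=(\rho(g),\psi_g)$ for $\underline\rho$ and promote each to an outer autoequivalence by using the trivial inner-object datum, using the splitting to lift the coherence isomorphisms $\theta(g,h)$ canonically; this shows at least one lift exists.

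Having disposed of the obstruction, I would make the torsor statement precise. Any two liftings differ by a "twist" of the pseudonatural isomorphisms $\omega(g,h)$ (equivalently the modification data) by invertible objects, and the compatibility constraints force this twist to be a $2$-cocycle on $G$ valued in the kernel $\operatorname{Aut}_\otimes(\operatorname{Id}_\B)$ of $\Inv(\mathcal Z(\B))\to\Inv(\B)$; two such twists give equivalent liftings iff they differ by a coboundary. Hence the set of equivalence classes of liftings is a torsor over $H^2(G,\operatorname{Aut}_\otimes(\operatorname{Id}_\B))$, and since $\operatorname{Aut}_\otimes(\operatorname{Id}_\B)\cong\widehat{K_0(\B)}$ was canonically identified earlier, and (in the relevant context, e.g. when $\Inv(\B)$ is viewed as the relevant coefficient group as in the analogous $H^3$ discussion) one rewrites $\operatorname{Aut}_\otimes(\operatorname{Id}_\B)$ as $\Inv(\B)$; being careful, the precise coefficient group is $\operatorname{Aut}_\otimes(\operatorname{Id}_\B)$, but the statement in the proposition uses $\Inv(\B)$ because the kernel that appears in the direct-sum decomposition is matched with $\Inv(\B)$ via the splitting — in any case the group is $H^2(G,\Inv(\B))$ once the identification $\operatorname{Aut}_\otimes(\operatorname{Id}_\B)=\widehat{K_0(\B)}\cong\Inv(\B)$ coming from modularity of the relevant center is invoked. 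The main obstacle I expect is not the existence part (which the splitting handles cleanly) but bookkeeping the equivalence relation on lifting data: one must check that a modification-level equivalence of two tri-homomorphism data translates exactly into a coboundary of the difference $2$-cocycle, so that the torsor has no extra identifications and no extra classes. This is a diagram-chase through Figure \ref{coherencia}, using that inner autoequivalences of a braided category are canonically trivial so that all the $\psi^g$ and $\alpha$ terms in \eqref{formula 2} collapse.
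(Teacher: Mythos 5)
Your overall strategy (realize the liftings as lifts of a map of classifying spaces through a fibration whose fiber is an Eilenberg--MacLane space in degree $2$, observe that a lift exists because the splitting provides a section, and conclude that the set of lifts is a torsor with a canonical basepoint) is the same as the paper's implicit argument, but you have identified the wrong fiber, and this propagates into the wrong coefficient group. The map $\underline{\operatorname{Out}_\otimes^{br}(\B)}\to\underline{\operatorname{Aut}_\otimes^{br}(\B)}$ along which one lifts is an isomorphism on $\pi_1$, and on $\pi_2$ it is a map $\Inv(\mathcal{Z}(\B))\to\operatorname{Aut}_\otimes(\operatorname{Id}_\B)$; note that $\pi_2(\underline{\operatorname{Aut}_\otimes^{br}(\B)})=\operatorname{Aut}_\otimes(\operatorname{Id}_\B)$, not $\Inv(\B)$, so the map on $\pi_2$ cannot be ``the surjection $\Inv(\mathcal{Z}(\B))\twoheadrightarrow\Inv(\B)$'' as you assert. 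Under the braiding-induced decomposition $\Inv(\mathcal{Z}(\B))=\operatorname{Aut}_\otimes(\operatorname{Id}_\B)\oplus\Inv(\B)$ this map is the projection onto the \emph{first} summand, whose kernel is $\Inv(\B)$, not $\operatorname{Aut}_\otimes(\operatorname{Id}_\B)$. Hence the homotopy fiber is $K(\Inv(\B),2)$ and the set of liftings of the fixed $\u{\rho}$ is a torsor over $H^2(G,\Inv(\B))$ directly, trivialized by the canonical lift with trivial object data. Concretely, the extra datum in a lifting is the invertible object $\omega(g,h)\in\Inv(\B)$ carried by each pseudonatural transformation $(\omega(g,h),\chi_{g,h})$, which is exactly where the coefficients $\Inv(\B)$ come from.

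Your attempted repair at the end --- rewriting $H^2(G,\operatorname{Aut}_\otimes(\operatorname{Id}_\B))$ as $H^2(G,\Inv(\B))$ via ``$\operatorname{Aut}_\otimes(\operatorname{Id}_\B)=\widehat{K_0(\B)}\cong\Inv(\B)$ coming from modularity'' --- does not close the gap: the proposition is stated for a braided fusion category, where no such isomorphism exists in general, and even in the modular case it would be a non-canonical identification whose $G$-equivariance you would still have to verify; in fact no identification is needed once the kernel is computed correctly. (A smaller point: the difference between two liftings lives in the object part of the pseudonatural transformations, not in the ``modification data''; the modifications enter one categorical level higher, in the $H^4$-obstruction.) With the fiber corrected to $K(\Inv(\B),2)$, the rest of your argument --- existence of the canonical lift from the module splitting, the torsor structure, and coboundaries arising from equivalences of lifting data --- goes through and agrees with the paper's.
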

\qed

\begin{prop}\label{H4formula}
Given a categorical action $\underline{\rho}=(g_*, \psi^{g},\theta_{g,h}):\underline{G}\to \underline{\operatorname{Aut}_\otimes^{br}(\B)}$ and a 2-cocycle $\omega \in Z^2(G,\Inv(\B))$, a representative  4-cocycle for the $H^4$-obstruction is given by 
\begin{align}
O_4(\u\rho,\omega)(g_1,g_2,g_3,g_4)=&\theta_{g_1,g_2}(\omega_{g_3,g_4}) \label{formula 3}\\
&c_{(g_1g_2)_*(\omega_{g_3,g_4}),\omega_{g_1,g_2}}\notag\\
&\psi^{g_1}_{(g_2)_*(\omega_{g_3,g_4}),\omega_{g_2,g_3g_4}}\notag\\ &(\psi^{g_1}_{\omega_{g_2,g_3},\omega_{g_{2}g_{3},g_{4}}})^{-1} \notag\\
& \alpha_{(g_1)_*((g_2)_*(\omega_{g_3,g_4})),(g_1)_*(\omega_{g_2,g_{3}g_{4}}),\omega_{g_{1},g_{2}g_{3}g_{4}}}\notag\\
&\alpha_{(g_1)_*((g_2)_*(\omega_{g_3,g_4})),\omega_{g_1,g_2},\omega_{g_1g_2,g_3g_4}}^{-1} \notag\\
&\alpha_{\omega_{g_1,g_2},(g_1g_2)_*(\omega_{g_3,g_4}),\omega_{g_1g_2,g_3g_4}}\notag\\
& \alpha_{\omega_{g_1,g_2},\omega_{g_1g_2,g_3},\omega_{g_1g_2g_3,g_4}}^{-1}\notag\\
&\alpha_{(g_1)_*(\omega_{g_2,g_3}),\omega_{g_1,g_2g_3},\omega_{g_1g_2g_3,g_4}}\notag\\
&\alpha_{(g_1)_*(\omega_{g_2,g_3}),(g_1)_*(\omega_{g_2g_3,g_4}), \omega_{g_1,g_2g_3g_4}}^{-1}\notag
\end{align}
\end{prop}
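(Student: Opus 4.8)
The plan is to deduce \eqref{formula 3} from the obstruction formula \eqref{formula 2} for quasi-trivial extensions. Given the categorical action $\underline{\rho}=(g_*,\psi^g,\theta_{g,h})\colon\underline{G}\to\underline{\operatorname{Aut}_\otimes^{br}(\B)}$ and the $2$-cocycle $\omega\in Z^2(G,\Inv(\B))$, the preceding Proposition produces a lifting $\underline{\rho}'\colon\underline{G}\to\underline{\operatorname{Out}_\otimes^{br}(\B)}$, and the asserted $4$-cocycle is the obstruction to lifting $\underline{\rho}'$ to a tri-homomorphism. First I would make explicit the corresponding (braided) quasi-trivial $G$-extension data: the tensor autoequivalences and monoidal structures are $(g_*,\psi^g)$; the pseudonatural isomorphism $(\omega(g,h),\chi_{g,h})\colon g_*h_*\to (gh)_*$ has underlying invertible object $\omega_{g,h}$, with its transformation datum $\chi_{g,h}$ obtained from $\theta_{g,h}$ together with the half-braiding $c_{-,\omega_{g,h}}$; and the invertible modifications $\eta_{g,h,l}$ are assembled from $\delta_{G}(\omega)=1$ and the associativity constraint $\alpha$ of $\B$ evaluated at triples of the $\omega$'s. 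Here the appearance of half-braidings is forced by the splitting $\Inv(\mathcal{Z}(\B))=\operatorname{Aut}_\otimes(\operatorname{Id}_\B)\oplus\Inv(\B)$ discussed above: the $\operatorname{Aut}_\otimes(\operatorname{Id}_\B)$-piece of the lift contributes $\theta$, the $\Inv(\B)$-piece contributes $c_{-,\omega_{g,h}}$. (For general braided $\B$ one may first pass to the modular category $\mathcal{Z}(\B)$, extending $\underline{\rho}$ by functoriality of the center and $\omega$ along $\Inv(\B)\hookrightarrow\Inv(\mathcal{Z}(\B))$, $V\mapsto(V,c_{-,V})$, so that $\cross{\mathcal{Z}(\B)}{G}$ is quasi-trivial and carries the obstruction; for $\B$ modular every $G$-crossed braided extension is already quasi-trivial, since each $\B_g$ is an invertible $\B$-module hence contains an invertible object, and $\operatorname{Ad}_{X_g}\cong g_*$ via the crossed braiding.)

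With this data in hand, the next step is to substitute it into \eqref{formula 2} and read off the result in terms of $\B$. The two $\psi^{g_1}$-factors and the eight $\alpha$-factors of \eqref{formula 2} involve only invertible objects and the monoidal constraints, so they transcribe unchanged into \eqref{formula 3}. The only term that must be unwound is $\chi_{g_1,g_2}(\omega_{g_3,g_4})$: evaluated at the invertible object $\omega_{g_3,g_4}$, the transformation datum $\chi_{g_1,g_2}$ becomes the product of the scalar $\theta_{g_1,g_2}(\omega_{g_3,g_4})$ coming from the $\operatorname{Aut}_\otimes(\operatorname{Id}_\B)$-component and the braiding scalar $c_{(g_1g_2)_*(\omega_{g_3,g_4}),\omega_{g_1,g_2}}$ coming from the half-braiding by $\omega_{g_1,g_2}$. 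Collecting all the factors in the order dictated by \eqref{formula 2} yields exactly \eqref{formula 3}.

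Finally, I would note that $O_4(\underline{\rho},\omega)$ is a $4$-cocycle whose class depends only on the equivalence classes of $\underline{\rho}$ and $\omega$: this is inherited from the corresponding statement for \eqref{formula 2} in \cite{G}, using that equivalent data $(\underline{\rho},\omega)$ induce equivalent liftings $\underline{\rho}'$. Consequently $[O_4(\underline{\rho},\omega)]=0$ in $H^4(G,U(1))$ is precisely the condition for $\underline{\rho}$, together with $\alpha=[\omega]$, to lift to a tri-homomorphism $\uu{G}\to\uuPic{\B}$, i.e.\ for the $G$-crossed braided extension determined by this gauging data to exist. I expect the main obstacle to be the computation in the first two paragraphs: checking precisely that $\chi_{g,h}$ decomposes as the $\theta$-datum twisted by the half-braiding of $\omega_{g,h}$ and that the modifications $\eta_{g,h,l}$ are the expected $\alpha$-expressions, so that exactly one extra half-braiding factor---and no further braiding terms---survives in passing from \eqref{formula 2} to \eqref{formula 3}.
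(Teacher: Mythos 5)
Your proposal is correct and follows essentially the same route as the paper: the paper's proof consists precisely of writing the pseudonatural transformation datum as $\chi_{g_1,g_2}(V)=c_{(g_1g_2)_*(V),\omega(g_1,g_2)}\circ(\theta_{g_1,g_2}(V)\otimes\id_{\omega(g_1,g_2)})$ and substituting into the quasi-trivial formula \eqref{formula 2}, which is exactly the decomposition of $\chi$ into the $\theta$-datum twisted by the half-braiding that you identify as the key step. Your additional remarks (the splitting of $\Inv(\mathcal{Z}(\B))$, passing to the center, and the cocycle/equivalence-class statements) are consistent with the surrounding discussion in the paper and only elaborate on the same argument.
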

\begin{proof}
The pseudo-natural transformation associated to $\omega \in Z^2(G,\Inv(\B))$ is $(\omega(g_1,g_2),\chi_{g_1,g_2}): (g_1)_*\circ (g_2)_*\to (g_1g_2)_*$ defined by $$\chi_{g_1,g_2}(V):= c_{(g_1g_2)_*(V),\omega(g_1,g_2)}\circ(\theta_{g_1,g_2}(V)\otimes \id_{\omega(g_1,g_2)}),$$ for all $V\in \C, g_1,g_2\in G$.

Hence replacing  $(\omega(g_1,g_2),\chi_{g_1,g_2})$ in formula \eqref{formula 2}, we get the new formula of the 4-cocycle.
\end{proof}

\subsection{$H^4$ obstruction to $G$-crossed braided fusion categories}

Let $(\B,c)$ be a BFC. Suppose a categorical action $(g_*,\psi^g,\theta_{g,h})_{(g,h\in G)}: \u{G}\to \u{\Aut \B}$ 
admits a gauging $\uu{\rho}:\uu{G}\to \uu{\Pic \B}$. 
Then the equivalence classes of  homomorphism of 2-groups $ \u{G}\to \u{\Pic \B}$ with associated topological symmetry $(g_*,\psi^g,\theta_{g,h})_{(g,h\in G)}$ is a 
torsor over $H^2_\rho(G, \Inv(\B))$. 
Given an element $\omega \in Z^2_\rho(G, \Inv(\B)))$, we shall denote by $(\omega\rhd \u{\rho}):\u{G}\to \u{\Pic \B}$, the associated homomorphism of 2-groups.

\begin{prop}
The homomorphism  $(\omega\rhd \u{\rho}):\u{G}\to \u{\Pic \B}$ can be gauged if and only if the 4-cocycle
$O_4(\u\rho,\omega)$, defined in equation \eqref{formula 3}, is cohomologically trivial.
\end{prop}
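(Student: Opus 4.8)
The strategy is to reduce the question about the $G$-crossed braided extension $(\omega\rhd\u\rho)$ of the braided fusion category $\B$ to the already-developed obstruction theory for quasi-trivial extensions, via the Drinfeld center. First I would recall the identification of $2$-groups: since $\B$ is modular, $\u{\Pic\B}\simeq\u{\Aut^{br}_\otimes(\B)}$ by $\Theta_\B$ \cite[Theorem 5.2]{ENO3}, and the obstruction to lifting a homomorphism $\u{G}\to\u{\Pic\B}$ to a tri-homomorphism $\uu{G}\to\uu{\Pic\B}$ is, under this identification, the same as the Postnikov-type $H^4$-obstruction read off the $3$-type of $\uu{\Pic\B}$. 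The point is to express this $3$-type in terms of data we control, namely $\operatorname{Inv}(\mathcal Z(\B))=\operatorname{Aut}_\otimes(\operatorname{Id}_\B)\oplus\operatorname{Inv}(\B)$ as $\operatorname{Aut}^{br}_\otimes(\B)$-modules (the splitting established just above via $V\mapsto(V,c_{-,V})$).

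Next I would invoke the relation between $\uu{\Pic\B}$ and $\uu{\operatorname{Out}_\otimes^{br}(\B)}$: a tri-homomorphism $\uu{G}\to\uu{\Pic\B}$ restricting to the fixed $\u\rho:\u{G}\to\u{\Aut^{br}_\otimes(\B)}$ corresponds to a braided quasi-trivial $G$-extension of $\B$, equivalently a tri-homomorphism $\uu{\rho}:\uu{G}\to\uu{\operatorname{Out}_\otimes^{br}(\B)}$ lifting $\u\rho$, since for a braided category every inner autoequivalence is trivial and $\pi_2(\u{\operatorname{Out}_\otimes^{br}(\B)})=\operatorname{Inv}(\mathcal Z(\B))$. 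Under the direct-sum decomposition of $\operatorname{Inv}(\mathcal Z(\B))$ as an $\operatorname{Aut}^{br}_\otimes(\B)$-module, the class $\omega\in Z^2_\rho(G,\operatorname{Inv}(\B))$ selects exactly the pseudo-natural transformation $(\omega(g_1,g_2),\chi_{g_1,g_2})$ displayed in the proof of Proposition \ref{H4formula}, with $\chi_{g_1,g_2}(V)=c_{(g_1g_2)_*(V),\omega(g_1,g_2)}\circ(\theta_{g_1,g_2}(V)\otimes\operatorname{id}_{\omega(g_1,g_2)})$. Substituting this into the general quasi-trivial obstruction formula \eqref{formula 2} produces precisely \eqref{formula 3}. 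Hence $(\omega\rhd\u\rho)$ lifts to $\uu{\rho}:\uu{G}\to\uu{\operatorname{Out}_\otimes^{br}(\B)}$, i.e. admits a braided quasi-trivial extension, if and only if $[O_4(\u\rho,\omega)]=0$ in $H^4(G,U(1))$.

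Finally I would close the loop between ``admits a braided quasi-trivial extension of $\B$'' and ``can be gauged'', i.e. ``lifts to $\uu{G}\to\uu{\Pic\B}$''. The content here is that the monoidal equivalence $\u{\Pic\B}\simeq\u{\Aut^{br}_\otimes(\B)}$ extends to the relevant $3$-groups compatibly with the Postnikov data: the obstruction to lifting $\u{G}\to\u{\Aut^{br}_\otimes(\B)}$ through $\uu{\operatorname{Out}^{br}_\otimes(\B)}$ and the obstruction to lifting $\u{G}\to\u{\Pic\B}$ through $\uuPic{\B}$ live in the same $H^4(G,U(1))$ and agree, because both are computed from the same $k$-invariant once $\operatorname{Aut}_\otimes(\operatorname{Id}_\B)\subseteq\operatorname{Inv}(\mathcal Z(\B))$ is identified with $\pi_2$ and $U(1)$ with $\pi_3$. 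This last identification — making precise that passing from $G$-crossed braided extensions (Theorem \ref{ENO3712}) to braided quasi-trivial extensions does not change the $H^4$-class, and that the choice of $\omega$ on the $\operatorname{Inv}(\B)$-summand versus on the full $\operatorname{Inv}(\mathcal Z(\B))$ is consistent with the module splitting — is the step I expect to require the most care; the remainder is the bookkeeping substitution already carried out in Proposition \ref{H4formula}.
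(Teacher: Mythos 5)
Your overall strategy --- substitute the pseudo-natural transformation determined by $\omega$ and the braiding into the quasi-trivial obstruction formula \eqref{formula 2} to obtain \eqref{formula 3}, and then transfer the resulting class to the lifting problem for $\uu{\Pic{\B}}$ --- is the route the paper's introduction advertises, and your second paragraph is exactly the (already established) content of Proposition \ref{H4formula}. The paper's own proof of the present proposition is instead a direct appeal to \cite[Proposition 8.15]{ENO3}: the obstruction to lifting a $2$-homomorphism $\u{G}\to\u{\Pic{\B}}$ to a tri-homomorphism $\uu{G}\to\uu{\Pic{\B}}$ is the Pontryagin--Whitehead quadratic function of \cite[Section 8.7]{ENO3}, and \eqref{formula 3} is identified as a concrete formula for that function. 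Your proposal tries to deduce the statement from the lifting problem for $\uu{\operatorname{Out}_\otimes^{br}(\B)}$, and that is where it breaks down.

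Concretely, two steps are not justified. First, a tri-homomorphism $\uu{G}\to\uu{\Pic{\B}}$ over a nontrivial $\u{\rho}$ does \emph{not} correspond to a braided quasi-trivial extension of $\B$: the homogeneous components of the associated $G$-crossed braided extension are the invertible module categories $\Theta_\B^{-1}(\rho(g))$, which in general contain no invertible objects of the extension (in $\cross{(\Fib\boxtimes\Fib)}{\Z_2}$ the defects have dimensions $\sqrt{(5+\sqrt 5)/2}$ and $\sqrt{5+2\sqrt 5}$); quasi-triviality of the extension of $\B$ itself only occurs for trivial $\rho$, which is Corollary \ref{formula 4 quasi-trivial}, not the general case. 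Second, the assertion that the two obstructions ``agree because both are computed from the same $k$-invariant'' is precisely the claim to be proved and cannot be read off from a common Postnikov tower: $\uu{\operatorname{Out}_\otimes^{br}(\B)}$ and $\uu{\Pic{\B}}$ are inequivalent $3$-groups, with $\pi_2$ equal to $\Inv(\mathcal{Z}(\B))$ and $\Inv(\B)$ respectively, so one must either construct an actual comparison (e.g.\ through $\uu{\text{BrPic}(\B)}$ and the quasi-trivial extension of $\mathcal{Z}(\B)$ furnished by the relative center, as the introduction indicates) or, as the paper does, verify directly that \eqref{formula 3} computes the Pontryagin--Whitehead function and then invoke \cite[Proposition 8.15]{ENO3}. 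As written, your argument shows that the lifting problem for $\uu{\operatorname{Out}_\otimes^{br}(\B)}$ is unobstructed iff $[O_4(\u\rho,\omega)]=0$, but not the stated equivalence with gaugeability of $(\omega\rhd\u{\rho})$.
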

\begin{proof}
The obstruction $O_4(\u\rho,\omega)$ is a concrete formula for the Pontryagin-Whitehead quadratic function defined in \cite[Section 8.7]{ENO3}, so the proposition follows from \cite[Proposition 8.15]{ENO3}.
\end{proof}

\begin{cor}\label{formula 4 quasi-trivial}
Let  $t:\u{G}\to \u{\Aut \B}$ be the trivial homomorphism and $\omega\in Z^2(G,\Inv(\B))$. The homomorphism $(\omega\rhd t):\u{G}\to \u{\Pic \B}$ can be gauged if and only if the cohomology class of the 4-cocycle 

\begin{align*}
O_4(g_1,g_2,g_3,g_4)=&c(\omega_{g_3,g_4},\omega_{g_1,g_2})\\
&\alpha_{\omega_{g_3,g_4} ,\omega_{g_2,g_3g_4},\omega_{g_1,g_2g_3g_4}}\\
&\alpha_{\omega_{g_3,g_4},\omega_{g_1,g_2},\omega_{g_1g_2,g_3g_4}}^{-1}\\
&\alpha_{\omega_{g_1,g_2},\omega_{g_3,g_4},\omega_{g_1g_2,g_3g_4}}\\
&\alpha_{\omega_{g_1,g_2},\omega_{g_1,g_2g_3},\omega_{g_1g_2g_3,g_4}}^{-1}\\
&\alpha_{\omega_{g_2,g_3},\omega_{g_1,g_2g_3},\omega_{g_1g_2g_3,g_4}}\\
&\alpha_{\omega_{g_2,g_2},\omega_{g_2g_3,g_4},\omega_{g_1,g_2g_3g_4}}^{-1},
\end{align*}
vanishes. 
\end{cor}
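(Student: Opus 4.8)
The plan is to specialize Proposition~\ref{H4formula} to the case of the trivial categorical action and then to simplify the resulting expression. First I would set $g_* = \operatorname{Id}_\B$ for all $g \in G$, $\psi^g = \operatorname{id}$, and $\theta_{g,h} = \operatorname{id}$ for all $g,h \in G$ in formula \eqref{formula 3}. With these substitutions the very first factor $\theta_{g_1,g_2}(\omega_{g_3,g_4})$ becomes trivial, the two factors involving $\psi^{g_1}$ become trivial, and the second factor $c_{(g_1g_2)_*(\omega_{g_3,g_4}),\omega_{g_1,g_2}}$ becomes $c(\omega_{g_3,g_4},\omega_{g_1,g_2})$. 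In every remaining associativity factor each expression of the form $(g)_*(\omega_{a,b})$ collapses to $\omega_{a,b}$, so the eight $\alpha$-factors reduce to associativity constraints evaluated directly on the $\omega_{a,b}$'s.

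Next I would carry out the bookkeeping of matching the reduced factors from \eqref{formula 3} against the seven $\alpha$-factors claimed in the statement. After the collapse $(g_i)_* \to \operatorname{Id}$, several of the eight $\alpha$-terms in \eqref{formula 3} either coincide or combine: for instance the term $\alpha_{(g_1)_*((g_2)_*(\omega_{g_3,g_4})),\omega_{g_1,g_2},\omega_{g_1g_2,g_3g_4}}^{-1}$ becomes $\alpha_{\omega_{g_3,g_4},\omega_{g_1,g_2},\omega_{g_1g_2,g_3g_4}}^{-1}$ and the term $\alpha_{\omega_{g_1,g_2},(g_1g_2)_*(\omega_{g_3,g_4}),\omega_{g_1g_2,g_3g_4}}$ becomes $\alpha_{\omega_{g_1,g_2},\omega_{g_3,g_4},\omega_{g_1g_2,g_3g_4}}$. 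One then relabels via the cocycle identity $\delta\omega = 1$ (so $\omega_{g_1,g_2}\omega_{g_1g_2,g_3g_4}$ relates to $\omega_{g_2,g_3g_4}\omega_{g_1,g_2g_3g_4}$ up to the action, which is trivial here) to rewrite the remaining subscripts into the form displayed in the corollary, yielding $c(\omega_{g_3,g_4},\omega_{g_1,g_2})$ times the six listed $\alpha^{\pm1}$ factors. Finally I would invoke the preceding proposition, which identifies $O_4(\u\rho,\omega)$ as the genuine obstruction class, to conclude that $(\omega\rhd t)$ can be gauged precisely when this reduced $4$-cocycle is cohomologically trivial.

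The main obstacle I expect is the relabeling step: keeping track of which $\alpha$-subscripts survive the collapse and matching them term-by-term against the corollary's list is error-prone, especially because the displayed formula in the corollary appears to contain typos (e.g.\ $\omega_{g_2,g_2}$ in the last two lines should presumably read $\omega_{g_2,g_3}$, and the subscript patterns must be reconciled using the $2$-cocycle condition on $\omega$ together with the triviality of the $G$-action on $\Inv(\B)$ induced by $t$). So the real content of the proof is a careful, essentially mechanical, substitution-and-simplification, and the statement should be read as recording the outcome of that computation rather than asserting anything conceptually new beyond Proposition~\ref{H4formula}.
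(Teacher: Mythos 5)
Your proposal is correct and is exactly the paper's (implicit) argument: the corollary is stated without separate proof as the direct specialization of Proposition~\ref{H4formula} to $g_*=\operatorname{Id}$, $\psi^g=\operatorname{id}$, $\theta_{g,h}=\operatorname{id}$, under which the eight $\alpha$-factors collapse to the listed six (two of the printed subscripts, $\omega_{g_2,g_2}$ and $\omega_{g_1,g_2g_3}$ in the fourth $\alpha$-line, are indeed typos for $\omega_{g_2,g_3}$ and $\omega_{g_1g_2,g_3}$, as you suspected). The only superfluous step in your write-up is the appeal to the cocycle identity $\delta\omega=1$ --- no relabeling is needed, the formula is obtained by pure substitution.
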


The same formula in this case was derived in \cite{BBCW}.

If the topological symmetry $(\omega\rhd t):\u{G}\to \u{\Aut \B}$ can be gauged the associated $G$-crossed braided fusion categories are quasi-trivial extensions. Conversely, every quasi-trivial $G$-crossed braided fusion category is the gauging of a topological symmetry $(\omega\rhd t):\u{G}\to \u{\Aut \B}$.

Despite the simplicity of the topological symmetry $(\omega\rhd t):\u{G}\to \u{\Aut \B}$, the following proposition said that an interesting family of UMCs can be obtained as gaugings.

\begin{prop}\label{GTPMC}
Every group-theoretical modular tensor category is the gauging of a topological symmetry $(\omega\rhd t): \u{G}\to \uAut{\B}$, where $\B$ is a pointed modular tensor category.
\end{prop}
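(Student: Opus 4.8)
The plan is to unpack what a group-theoretical modular category is and recognize it as a gauging in the sense made precise in Corollary \ref{formula 4 quasi-trivial}. Recall that a group-theoretical fusion category is by definition Morita equivalent to a pointed fusion category $\operatorname{Vec}_L^\mu$ for some finite group $L$ and $\mu\in H^3(L,U(1))$; concretely it has the form $\C(L,\mu,F,\nu)=\operatorname{Vec}_L^\mu{}_F^F$, the category of $\operatorname{Vec}_F^\nu$-bimodules inside $\operatorname{Vec}_L^\mu$. A group-theoretical \emph{modular} category is then the Drinfeld center of such a $\C$, equivalently a modular category that is Witt equivalent to the trivial one and admits a Lagrangian algebra making it group-theoretical. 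The key structural input I would use is the description, going back to work on the center of group-theoretical categories, of $\mathcal{Z}(\operatorname{Vec}_L^\mu)$ and more generally the fact that any group-theoretical modular category arises as $\mathcal{Z}(\operatorname{Vec}_F^\nu)$-type data: there is an abelian normal situation giving it as a $G$-crossed product of a pointed modular category. More directly, I would invoke that every group-theoretical modular category $\M$ contains a Tannakian subcategory $\Rep(G)$ whose core (de-equivariantization) is a \emph{pointed} modular category $\B$, and such that the associated $G$-crossed extension $\cross{\B}{G}$ is \emph{quasi-trivial}.

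With that input, the argument assembles as follows. First I would recall from Section \ref{gauging} that taking the core of a Tannakian $\Rep(G)\subseteq \M$ is precisely the inverse of gauging: if $\B$ is the core, then $\M\cong\gauge{\B}{G}{G}$ for the gauging data $(\rho,\alpha,\beta)$ recording the $G$-crossed braided extension $\cross{\B}{(\rho,\alpha,\beta)}$ that sits inside $\M$ as the $G$-graded piece. Second, I would argue that for a group-theoretical modular category this core $\B$ can be taken pointed: this is where one uses that group-theoretical modular categories are exactly those of the form $\mathcal{Z}(\operatorname{Vec}_F^\nu)$-flavored data, and that such a center contains a maximal Tannakian subcategory whose de-equivariantization is $\operatorname{Vec}_A^{q}$ for an abelian group $A$ — i.e. a pointed modular category. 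Third, I would show the corresponding $G$-crossed braided extension $\cross{\B}{G}$ is quasi-trivial, meaning each graded component $\B_g$ contains an invertible object; since $\B$ itself is pointed, every invertible $\B$-module category is automatically pointed (equivalent to $\B$ as a module category up to the grading), so each $\B_g$ is a $\operatorname{Vec}$-type category with invertible simples, hence quasi-trivial. Finally, by the last paragraph before the proposition, quasi-trivial $G$-crossed braided fusion categories are precisely the gaugings of topological symmetries of the form $(\omega\rhd t):\u{G}\to\uAut{\B}$, so $\M=\gauge{\B}{G}{G}$ is the gauging of such a $(\omega\rhd t)$ with $\B$ pointed modular, which is the claim.

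I would structure the write-up by first stating the two facts I am importing — (i) group-theoretical modular $\Leftrightarrow$ core of a Tannakian subcategory is pointed modular, and (ii) pointed core forces the $G$-crossed extension to be quasi-trivial — with references to \cite{DGNO2, DMNO} for the core/gauging correspondence and to the classification of group-theoretical (modular) categories, and then deduce the proposition in one or two lines using the quasi-trivial characterization already established in this section.

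The hard part will be step two: pinning down precisely why the core of a suitably chosen Tannakian subcategory of a group-theoretical modular category can be arranged to be \emph{pointed} rather than merely group-theoretical of smaller rank. The cleanest route is to use the explicit model $\M\cong\mathcal{Z}(\operatorname{Vec}_L^\mu)$ together with the subgroup structure of $L$: the center contains $\Rep(L)$ as a Tannakian subcategory, and de-equivariantizing by all of $L$ (when $\mu$ is such that this is allowed) or by a suitable abelian-by-abelian filtration yields a pointed modular category; if $\mu$ obstructs full de-equivariantization one iterates, but since we are allowed to choose \emph{any} $G$ realizing the Tannakian subcategory, one should be able to take $G=L$ (or $G$ an appropriate solvable group) so that the core is literally $\operatorname{Vec}_A^q$. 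I would flag that this reduction is where one leans most heavily on the known classification results and would cite them rather than reprove them.
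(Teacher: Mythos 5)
Your overall strategy (locate a Tannakian $\Rep(G)$, de-equivariantize to a pointed modular core, argue the resulting $G$-crossed extension is quasi-trivial, and invoke the quasi-trivial characterization of gaugings of $(\omega\rhd t)$) is the same shape as the paper's argument, but two of your intermediate claims are wrong as stated, and the second one is exactly where the real content lives. First, a group-theoretical modular category is \emph{not} in general the Drinfeld center of a pointed category (nor Witt-trivial): the semion $\operatorname{Vec}_{\Z_2}^{q}$ is pointed, hence group-theoretical, and modular, but has nontrivial central charge, so it is no center. Your proposed reduction "realize $\M$ as $\mathcal{Z}(\operatorname{Vec}_L^\mu)$ and de-equivariantize by $\Rep(L)$" therefore does not cover the general case. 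Second, and more seriously, your step three is a non sequitur: pointedness of the trivial component $\B_e$ does \emph{not} force the components $\B_g$ of a $G$-crossed extension to contain invertible objects. Invertible module categories over a pointed braided category need not have invertible simple objects in the ambient fusion category; the Ising category is a $\Z_2$-crossed extension of the (pointed, modular) fermion $\operatorname{Vec}_{\Z_2}^{q}$ whose nontrivial component consists of a single simple object of dimension $\sqrt{2}$. So quasi-triviality is an additional hypothesis you must supply, not a consequence of the core being pointed.

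The paper closes both gaps with a single citation you did not identify: Naidu's theorem that every \emph{braided group-theoretical} fusion category is the equivariantization of a crossed \emph{pointed} category $\C$ --- pointedness of the entire $G$-crossed category, not merely of $\C_e$, is the essential input, and it delivers the pointed core and the quasi-triviality simultaneously. From there the paper extracts the cocycle concretely: $(G,\Inv(\C))$ is a crossed module with $\partial:\Inv(\C)\to G$ given by the grading; modularity forces the grading faithful (so $\partial$ is surjective and $\C_e$ is pointed modular), hence $\Inv(\C)$ is a central extension of $G$ by $A=\Inv(\C_e)$, and the classifying $2$-cocycle $\omega\in Z^2(G,A)$ is precisely the twisting datum in $(\omega\rhd t)$. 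If you want to salvage your write-up, replace your steps two and three by this appeal to the crossed-pointed realization (or prove an equivalent statement); without it the argument does not go through.
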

\begin{proof}
Recall that an equivariantization of a $G$-crossed braided fusion category $\B$ is modular if and only if the $G$-grading is faithful and $\B_e$ is modular.

By \cite[Theorem 5.3]{Na} every braided group-theoretical fusion category $\B$ can be obtained as a gauging of a pointed $G$-crossed braided fusion category $\C$. The pair $(G,\Inv(\C))$ is an ordinary  crossed module, where the $G$-action on $X$ is induced by the $G$-action on $\C$ and the morphism  $\partial: X\to G$ is defined by the $G$-grading. 

Since $\B$ is modular, $\partial$ is surjective, so $X$ is a central extension $G$ by $A=\Inv(\C_e)$  and $\C_e$ is a pointed modular category. If $\omega \in Z^2(G,A)$ is a 2-cocycle corresponding to the central extension $X$, then $\B$ is a gauging of the topological symmetry $(*,\omega):\u{G}\to \u{\Aut{\C_e}}$.
\end{proof}

\begin{rem}
\begin{itemize}
\item Every integral modular tensor category of Frobenius-Perron dimension $p^n$, with $p$ a primer number, is group-theoretical, \cite[Theorem 1.5]{DGNO1}, \cite[Theorem 8.28]{ENO}. Every fusion category of dimension $p^n$ with $p$ odd is automatically integral \cite{GN}. 
\item Using Proposition \ref{GTPMC} and Corollary \ref{formula 4 quasi-trivial} we can reduce the classification of group-theoretical modular categories to a pure problem in group cohomology.
\end{itemize}
\end{rem}


\section{Examples}\label{examples}

An extensive list of examples in the spin-network formalism is given in \cite{BBCW}.  Here we focus on two examples: the $\mathbb{Z}_2$-symmetry of the Deligne product of the Fibonacci category with itself, and the first non-abelian $S_3$ symmetry of the $3$-fermion theory $SO(8)_1$.  It would be interesting to compare our computation with related work in the future \cite{FS, FFRS}.

\subsection{${Fib}^{\boxtimes 2}$ with $\Z_2$ symmetry}
The modular category $\Fib \boxtimes \Fib$ has a $\Z_2$ symmetry which swaps the two $\Fib$ factors. Denote the simple objects (anyons) in $\Fib \boxtimes \Fib $ by $\{\unit = (\unit,\unit), (\unit,\tau), (\tau, \unit),(\tau,\tau)\}$. In the $\Z_2$-crossed braided extension $\cross{(\Fib \boxtimes \Fib)}{\Z_2}$, the sector labelled by the non-trivial element of $\Z_2$ contains two defects (simple objects), which are denoted by $X_{\unit}, X_{\tau}$. Number all the anyons in both sectors in the order $\{\unit = (\unit,\unit), (\unit,\tau), (\tau, \unit),(\tau,\tau),X_{\unit}, X_{\tau}\}$ by $\{1,2,3,4,5,6\}$. Below we give part of the data associated to $\cross{(\Fib \boxtimes \Fib)}{\Z_2}$, and the rest of the data can be found in Appendix 
A.

The quantum dimensions are:
$$\left\{1,\frac{1}{2} \left(1+\sqrt{5}\right),\frac{1}{2} \left(1+\sqrt{5}\right),\frac{1}{2} \left(3+\sqrt{5}\right),\sqrt{\frac{1}{2}
   \left(5+\sqrt{5}\right)},\sqrt{5+2 \sqrt{5}}\right\}$$
Thus the total quantum dimension is $\mathcal{D} = \frac{\sqrt{10}+5 \sqrt{2}}{2}$.

The Frobenius-Shur indicators are $\{1,1,1,1,-1,-1\}$. So the two defects have non-trivial Frobenius-Shur indicators.

For the group action, $g$ swaps $2$ with $3$, and fixes all other simple objects.

From the group actions, we deduce that the fusion rules are symmetric, namely $a \otimes b = b \otimes a.$  But the category is not braided. We omit the fusion rules of the subcategory $\Fib \boxtimes \Fib$ and those of the trivial object since they are rather simple. Note that some of the fusion rules have multiplicity more than $1$.

\begin{itemize}




\item $2\: \otimes \: 5\: = \: 6$

\item $2\: \otimes \: 6\: = \: 5\: + \: 6$



\item $3\: \otimes \: 5\: = \: 6$

\item $3\: \otimes \: 6\: = \: 5\: + \: 6$


\item $4\: \otimes \: 5\: = \: 5\: + \: 6$

\item $4\: \otimes \: 6\: = \: 5\: + \: {\color{red}{6}}\: + \: {\color{red}{6}}$

\item $5\: \otimes \: 5\: = \: 1\: + \: 4$

\item $5\: \otimes \: 6\: = \: 2\: + \: 3\: + \: 4$

\item $6\: \otimes \: 6\: = \: 1\: + \: 2\: + \: 3\: + \: {\color{red}{4}}\: + \: {\color{red}{4}}$
\end{itemize}

Appendix 
A 
contains a complete list of the remaining data including $F$-matrices, $R$-matrices, etc.

Gauging the $\Z_2$ symmetry of $\Fib \boxtimes \Fib$ results in the gauged theory $\SU(2)_8$, i.e., $\gauge{(\Fib \boxtimes \Fib)}{\Z_2}{\Z_2}=\SU(2)_8$. The data associated to $\SU(2)_8$ can be found in a number of reference, e.g \cite{PB}. One can verify $\SU(2)_8$  is indeed the correct outcome for gauging by computing the inverse process, which is called taking the core \cite{DGNO2}. Actually, the data for $\cross{(\Fib \boxtimes \Fib)}{\Z_2}$, $F$-matrices, $R$-matrices, etc, is obtained from computing the de-equivariantization of $\SU(2)_8$.

\subsection{$\SO(8)_1$ with the non-abelian $S_3$ symmetry}
The $\SO(8)_1$ theory, also called the $3$-fermion theory, has three mutually fermionic anyons, which are denoted by $\{\psi_1,\psi_2,\psi_3\}$. The fusion rules of the three fermions and the vacuum $\unit$ form the group $\Z_2 \times \Z_2$. Any permutation of the three fermions leaves the theory invariant, thus $\SO(8)_1$ has a symmetry group $\Sym_3$, which is a non-abelian symmetry. Since $\Sym_3 = \Z_3 \rtimes \Z_2$, by Theorem \ref{thm:seq gauge}, in order to gauge the whole symmetry group $\Sym_3$, we can first gauge $\Z_3$, and then gauge $\Z_2$. By \cite{BBCW}, gauging $\Z_3$ results in the theory $\SU(3)_3$, whose data can be found in \cite{AS}. 

The theory $\SU(3)_3$ has $10$ anyon types, which are denoted by $\\ \{\unit, a, a', X, Y, X', aX, aX', a'X, a'X'\}$. We arrange the anyons in the order as shown in Figure \ref{fig:su3}, then the $\Z_2$ symmetry is simply a reflection along the height of the vertical edge of the triangle. The $\Z_2$ extension $\cross{(\SU(3)_3)}{\Z_2}$ of $\SU(3)_3$ contains one defect sector, as well as the trivial sector $\SU(3)_3$. The defector sector contains two defects $\{X^{+}, X^{-}\}$. For the fusion rules of $\cross{(\SU(3)_3)}{\Z_2}$ involving the defects, see \cite{BBCW}. 

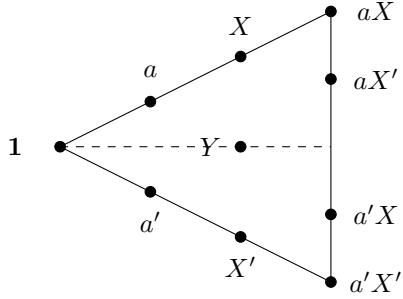
\begin{figure}
\begin{tikzpicture}[scale = 2]
\begin{scope}
\draw (0,0) -- (1.8,0.9) -- (1.8,-0.9) -- (0,0);
\draw[dashed] (0,0) -- (1.8,0);
\filldraw (0,0) circle(1pt);
\filldraw (0.6,0.3) circle(1pt);
\filldraw (1.2,0) circle(1pt);
\filldraw (1.2,0.6) circle(1pt);
\filldraw (1.8,0.45) circle(1pt);
\filldraw (1.8,0.9) circle(1pt);
\filldraw (0.6,-0.3) circle(1pt);
\filldraw (1.2,-0.6) circle(1pt);
\filldraw (1.8,-0.45) circle(1pt);
\filldraw (1.8,-0.9) circle(1pt);

\draw (-0.3,0) node{$\unit$};
\draw (0.6,0.5) node{$a$};
\draw (0.6,-0.5) node{$a'$};
\draw (1.2,0.8) node{$X$};
\draw (1,0) node{$Y$};
\draw (1.2,-0.8) node{$X'$};
\draw (2.1,0.9) node{$aX$};
\draw (2.1,0.45) node{$aX'$};
\draw (2.1,-0.9) node{$a'X'$};
\draw (2.1,-0.45) node{$a'X$};
\end{scope}
\end{tikzpicture}
\caption{Symmetry for $\SU(3)_3$} \label{fig:su3}
\end{figure}

By \cite{BN}, we can compute the fusion rules of $\gauge{(\SU(3)_3)}{\Z_2}{\Z_2}$ from those of $\cross{(\SU(3)_3)}{\Z_2}$ and some cohomology data. 

We denote the anyon types by   $$\{\unit,(\extunit,-1),a,(Y,1),(Y,-1),X,aX,aX',(X^{+},1),(X^{+},-1),(X^{-},1),(X^{-},-1)\}.$$
Their quantum dimensions are respectively $\{1,1,2,3,3,4,4,4,3\sqrt{2},3\sqrt{2},3\sqrt{2},3\sqrt{2}\}$.

In Appendix 
B of the earlier version, two versions of $\gauge{(\SU(3)_3)}{\Z_2}{\Z_2}$ are presented. But it turns out that these two versions are equivalent by a swap $(Y,1) \leftrightarrow (Y,-1)$, so they are actually the same theory. One can also find the fusion rules there.

The $T$-matrix (the twist) of the anyons is given by $$diag = \left(1,1,1,-1,-1,e^{\frac{2\pi i}{9}},e^{\frac{8\pi i}{9}},e^{-\frac{4\pi i}{9}},e^{\frac{\pi i}{8}},e^{-\frac{7\pi i}{8}},e^{\frac{7\pi i}{8}},e^{-\frac{\pi i}{8}}\right)$$

The $S$-matrix is given by: 

$
\left(
\begin{array}{cccccccccccc}
 1 & 1 & 2 & 3 & 3 & 4 & 4 & 4 & 3 \sqrt{2} & 3 \sqrt{2} & 3 \sqrt{2} & 3 \sqrt{2} \\
 1 & 1 & 2 & 3 & 3 & 4 & 4 & 4 & -3 \sqrt{2} & -3 \sqrt{2} & -3 \sqrt{2} & -3 \sqrt{2} \\
 2 & 2 & 4 & 6 & 6 & -4 & -4 & -4 & 0 & 0 & 0 & 0 \\
 3 & 3 & 6 & -3 & -3 & 0 & 0 & 0 & -3 \sqrt{2} & -3 \sqrt{2} & 3 \sqrt{2} & 3 \sqrt{2} \\
 3 & 3 & 6 & -3 & -3 & 0 & 0 & 0 & 3 \sqrt{2} & 3 \sqrt{2} & -3 \sqrt{2} & -3 \sqrt{2} \\
 4 & 4 & -4 & 0 & 0 & b & c & a & 0 & 0 & 0 & 0 \\
 4 & 4 & -4 & 0 & 0 & c & a & b & 0 & 0 & 0 & 0 \\
 4 & 4 & -4 & 0 & 0 & a & b & c & 0 & 0 & 0 & 0 \\
 3 \sqrt{2} & -3 \sqrt{2} & 0 & -3 \sqrt{2} & 3 \sqrt{2} & 0 & 0 & 0 & 0 & 0 & 6 & -6 \\
 3 \sqrt{2} & -3 \sqrt{2} & 0 & -3 \sqrt{2} & 3 \sqrt{2} & 0 & 0 & 0 & 0 & 0 & -6 & 6 \\
 3 \sqrt{2} & -3 \sqrt{2} & 0 & 3 \sqrt{2} & -3 \sqrt{2} & 0 & 0 & 0 & 6 & -6 & 0 & 0 \\
 3 \sqrt{2} & -3 \sqrt{2} & 0 & 3 \sqrt{2} & -3 \sqrt{2} & 0 & 0 & 0 & -6 & 6 & 0 & 0 \\
\end{array}
\right)
$



where $a = -8 \cos(\frac{2\pi}{9}), b = 4 \sqrt{3} \sin \left(\frac{\pi }{9}\right)-4 \cos \left(\frac{\pi }{9}\right), c = 8 \cos \left(\frac{\pi }{9}\right)$ are the three roots of $-64-48x+x^3$.




\appendix
\section{Data for $\cross{(\Fib \boxtimes \Fib)}{\Z_2}$}
\label{sec:dataFib}

\subsection{$F$-Matrices}
The $F$-matrices are defined by the following figure, where $(n,k,l)$ is the row index and $(m,i,j)$ is the column index. The indexes are listed in the dictionary order. So now the $6j$-symbols are really $10j$-symbols. We omit those $F$-matrices where $a, b$ or $c$ is $1$, in which case the $F$-matrices are the identity matrices. $($These identity matrices could have dimension more than $1$ since now there are multiplicities in the fusion rules.$)$

\setlength{\unitlength}{0.030in}
\begin{picture}(50,40)(0,0)
 \put(20,10){\line(0,-1){10}}
 \put(20,10){\line(1,1){20}}
 \put(20,10){\line(-1,1){20}}
\put(10,20){\line(1,1){10}}

 \put(2,30){$a$}
 \put(22,30){$b$}
 \put(42,30){$c$}
 \put(16,16){$m$}
 \put(22,2){$d$}
 \put(5,17){$i$}
 \put(15,7){$j$}

 \put(35,15){=}
 \put(40,15){$\sum\limits_{n} F_{d;n,k,l;m,i,j}^{abc}$}

 \put(80,10){\line(0,-1){10}}
 \put(80,10){\line(1,1){20}}
 \put(80,10){\line(-1,1){20}}
\put(90,20){\line(-1,1){10}}

 \put(62,30){$a$}
 \put(82,30){$b$}
 \put(102,30){$c$}
 \put(86,15){$n$}
 \put(82,2){$d$}
 \put(92,18){$k$}
\put(82,8){$l$}
\end{picture}

Let $a = \frac{1}{20} \left(5 i \left(-1+\sqrt{5}\right)+\sqrt{10 \left(5+\sqrt{5}\right)}\right)$.

\begin{itemize}
\item $\left(
\begin{array}{c}
 1 \\
\end{array}
\right)\: \: \textrm{for} $

$F^{222}_{1}, F^{223}_{3}, F^{223}_{4}, F^{224}_{3}, F^{225}_{6}, F^{226}_{5}, F^{233}_{2}, F^{233}_{4}, F^{234}_{1}, F^{234}_{4}, F^{235}_{5}, F^{242}_{3}  $

$F^{243}_{1}, F^{243}_{2}, F^{243}_{3}, F^{243}_{4}, F^{255}_{2}, F^{255}_{3}, F^{255}_{4}, F^{265}_{2}, F^{265}_{3}, F^{266}_{1}, F^{324}_{3}, F^{324}_{4}  $

$F^{325}_{5}, F^{333}_{1}, F^{342}_{1}, F^{342}_{4}, F^{343}_{2}, F^{344}_{1}, F^{355}_{2}, F^{355}_{4}, F^{356}_{1}, F^{356}_{3}, F^{365}_{1}, F^{365}_{2}  $

$F^{423}_{1}, F^{423}_{4}, F^{425}_{5}, F^{432}_{2}, F^{432}_{4}, F^{433}_{2}, F^{435}_{5}, F^{442}_{1}, F^{444}_{1}, F^{455}_{2}, F^{455}_{3}, F^{456}_{1}  $

$F^{465}_{1}  $

\item $\left(
\begin{array}{c}
 -1 \\
\end{array}
\right)\: \: \textrm{for} $

$F^{225}_{5}, F^{232}_{3}, F^{232}_{4}, F^{234}_{2}, F^{234}_{3}, F^{235}_{6}, F^{236}_{5}, F^{244}_{1}, F^{244}_{3}, F^{245}_{5}, F^{254}_{5}, F^{256}_{1}  $

$F^{256}_{2}, F^{256}_{3}, F^{265}_{1}, F^{322}_{3}, F^{322}_{4}, F^{323}_{2}, F^{323}_{4}, F^{324}_{1}, F^{324}_{2}, F^{325}_{6}, F^{326}_{5}, F^{332}_{2}  $

$F^{332}_{4}, F^{334}_{2}, F^{335}_{5}, F^{335}_{6}, F^{336}_{5}, F^{342}_{2}, F^{342}_{3}, F^{344}_{2}, F^{345}_{5}, F^{354}_{5}, F^{355}_{3}, F^{356}_{2}  $

$F^{365}_{3}, F^{366}_{1}, F^{422}_{3}, F^{423}_{2}, F^{423}_{3}, F^{424}_{1}, F^{424}_{3}, F^{432}_{1}, F^{432}_{3}, F^{434}_{1}, F^{434}_{2}, F^{442}_{3}  $

$F^{443}_{1}, F^{443}_{2}, F^{452}_{5}, F^{453}_{5}, F^{455}_{1}, F^{522}_{5}, F^{533}_{5}  $

\item $\left(
\begin{array}{c}
 \frac{1}{1+\sqrt{5}}+\frac{1}{2} i \sqrt{\frac{1}{2} \left(5+\sqrt{5}\right)} \\
\end{array}
\right)\: \: \textrm{for} $

$F^{252}_{5}, F^{353}_{5}  $

\item $\left(
\begin{array}{c}
 \frac{1}{4} \left(\sqrt{10-2 \sqrt{5}}-i \left(1+\sqrt{5}\right)\right) \\
\end{array}
\right)\: \: \textrm{for} $

$F^{252}_{6}, F^{353}_{6}  $

\item $\left(
\begin{array}{c}
 \frac{1}{1+\sqrt{5}}-\frac{1}{2} i \sqrt{\frac{1}{2} \left(5+\sqrt{5}\right)} \\
\end{array}
\right)\: \: \textrm{for} $

$F^{253}_{5}, F^{352}_{5}  $

\item $\left(
\begin{array}{c}
 \frac{1}{4} \left(\sqrt{10-2 \sqrt{5}}+i \left(1+\sqrt{5}\right)\right) \\
\end{array}
\right)\: \: \textrm{for} $

$F^{253}_{6}, F^{352}_{6}  $

\item $\left(
\begin{array}{c}
 \frac{1}{4} i \left(1+\sqrt{5}+i \sqrt{10-2 \sqrt{5}}\right) \\
\end{array}
\right)\: \: \textrm{for} $

$F^{262}_{5}, F^{363}_{5}  $

\item $\left(
\begin{array}{c}
 -\frac{1}{4} i \left(1+\sqrt{5}-i \sqrt{10-2 \sqrt{5}}\right) \\
\end{array}
\right)\: \: \textrm{for} $

$F^{263}_{5}, F^{362}_{5}  $

\item $\left(
\begin{array}{c}
 -i \\
\end{array}
\right)\: \: \textrm{for} $

$F^{522}_{6}, F^{523}_{6}, F^{525}_{4}, F^{526}_{1}, F^{532}_{6}, F^{535}_{4}, F^{536}_{1}, F^{542}_{5}, F^{543}_{5}, F^{545}_{1}, F^{552}_{3}, F^{552}_{4}  $

$F^{553}_{2}, F^{553}_{3}, F^{553}_{4}, F^{554}_{1}, F^{554}_{2}, F^{554}_{3}, F^{562}_{3}, F^{563}_{1}, F^{563}_{2}, F^{563}_{3}, F^{565}_{5}, F^{622}_{5}  $

$F^{623}_{5}, F^{626}_{1}, F^{632}_{5}, F^{636}_{1}, F^{645}_{1}, F^{652}_{1}, F^{652}_{2}, F^{654}_{1}, F^{663}_{1}  $

\item $\left(
\begin{array}{c}
 i \\
\end{array}
\right)\: \: \textrm{for} $

$F^{523}_{5}, F^{524}_{5}, F^{532}_{5}, F^{533}_{6}, F^{534}_{5}, F^{545}_{2}, F^{545}_{3}, F^{546}_{1}, F^{552}_{2}, F^{555}_{6}, F^{556}_{5}, F^{562}_{1}  $

$F^{562}_{2}, F^{564}_{1}, F^{625}_{1}, F^{633}_{5}, F^{635}_{1}, F^{652}_{3}, F^{653}_{1}, F^{653}_{2}, F^{653}_{3}, F^{655}_{5}, F^{662}_{1}  $

\item $\left(
\begin{array}{c}
 \frac{1}{4} \left(i \left(-1+\sqrt{5}\right)+\sqrt{2 \left(5+\sqrt{5}\right)}\right) \\
\end{array}
\right)\: \: \textrm{for} $

$F^{525}_{2}, F^{535}_{3}  $

\item $\left(
\begin{array}{c}
 \frac{1}{4} i \left(-1+\sqrt{5}+i \sqrt{2 \left(5+\sqrt{5}\right)}\right) \\
\end{array}
\right)\: \: \textrm{for} $

$F^{525}_{3}, F^{535}_{2}  $

\item $\left(
\begin{array}{c}
 \frac{1}{4} \left(-1-\sqrt{5}+i \sqrt{10-2 \sqrt{5}}\right) \\
\end{array}
\right)\: \: \textrm{for} $

$F^{526}_{2}, F^{536}_{3}, F^{625}_{2}, F^{635}_{3}  $

\item $\left(
\begin{array}{c}
 \frac{1}{4} \left(1+\sqrt{5}+i \sqrt{10-2 \sqrt{5}}\right) \\
\end{array}
\right)\: \: \textrm{for} $

$F^{526}_{3}, F^{536}_{2}, F^{625}_{3}, F^{635}_{2}  $

\item $\left(
\begin{array}{cc}
 \frac{1}{2} \left(-1+\sqrt{5}\right) & \sqrt{\frac{1}{2} \left(-1+\sqrt{5}\right)} \\
 \sqrt{\frac{1}{2} \left(-1+\sqrt{5}\right)} & \frac{1}{2} \left(1-\sqrt{5}\right) \\
\end{array}
\right)\: \: \textrm{for} $

$F^{222}_{2}, F^{224}_{4}, F^{266}_{2}, F^{333}_{3}, F^{345}_{6}, F^{433}_{4}  $

\item $\left(
\begin{array}{cc}
 -\sqrt{\frac{2}{3+\sqrt{5}}} & -\sqrt{\frac{1}{2} \left(-1+\sqrt{5}\right)} \\
 \sqrt{\frac{1}{2} \left(-1+\sqrt{5}\right)} & -\sqrt{\frac{1}{2} \left(3-\sqrt{5}\right)} \\
\end{array}
\right)\: \: \textrm{for} $

$F^{226}_{6}, F^{244}_{4}, F^{343}_{4}, F^{442}_{4}, F^{444}_{2}, F^{445}_{5}  $

\item $\left(
\begin{array}{cc}
 -\frac{1}{2} \sqrt{1+\sqrt{5}} & -\frac{-1+\sqrt{5}}{2 \sqrt{2}} \\
 \frac{\sqrt{2}}{1+\sqrt{5}} & -\frac{1}{2} \sqrt{1+\sqrt{5}} \\
\end{array}
\right)\: \: \textrm{for} $

$F^{236}_{6}, F^{346}_{5}  $

\item $\left(
\begin{array}{cc}
 \frac{1}{2} \left(1-\sqrt{5}\right) & \sqrt{\frac{1}{2} \left(-1+\sqrt{5}\right)} \\
 -\sqrt{\frac{1}{2} \left(-1+\sqrt{5}\right)} & \frac{1}{2} \left(1-\sqrt{5}\right) \\
\end{array}
\right)\: \: \textrm{for} $

$F^{242}_{4}, F^{245}_{6}, F^{265}_{4}, F^{344}_{4}, F^{443}_{4}, F^{444}_{3}  $

\item $\left(
\begin{array}{cc}
 \frac{1}{2} \left(-1+\sqrt{5}\right) & -\sqrt{\frac{1}{2} \left(-1+\sqrt{5}\right)} \\
 -\sqrt{\frac{1}{2} \left(-1+\sqrt{5}\right)} & -\sqrt{\frac{2}{3+\sqrt{5}}} \\
\end{array}
\right)\: \: \textrm{for} $

$F^{244}_{2}, F^{365}_{4}, F^{443}_{3}  $

\item $\left(
\begin{array}{cc}
 \frac{\sqrt{1+\sqrt{5}}}{2} & -\frac{1}{2} \sqrt{3-\sqrt{5}} \\
 \frac{1}{\sqrt{3+\sqrt{5}}} & \frac{\sqrt{1+\sqrt{5}}}{2} \\
\end{array}
\right)\: \: \textrm{for} $

$F^{246}_{5}  $

\item $\left(
\begin{array}{cc}
 -\frac{1}{2} \sqrt{-1+\sqrt{5}} & -\frac{1}{2} i \sqrt{5-\sqrt{5}} \\
 \frac{\sqrt{5-\sqrt{5}}}{2} & -\frac{1}{2} i \sqrt{-1+\sqrt{5}} \\
\end{array}
\right)\: \: \textrm{for} $

$F^{254}_{6}  $

\item $\left(
\begin{array}{cc}
 \frac{\sqrt{1+\sqrt{5}}}{2} & -\frac{i}{\sqrt{3+\sqrt{5}}} \\
 \frac{1}{\sqrt{3+\sqrt{5}}} & \frac{1}{2} i \sqrt{1+\sqrt{5}} \\
\end{array}
\right)\: \: \textrm{for} $

$F^{256}_{4}  $

\item $\left(
\begin{array}{cc}
 \frac{1}{4} \left(3-\sqrt{5}-i \sqrt{10-2 \sqrt{5}}\right) & \frac{1}{4} \left(i \sqrt{2 \left(1+\sqrt{5}\right)}+\sqrt{-10+6 \sqrt{5}}\right) \\
 -\frac{1}{4} i \left(\sqrt{2 \left(1+\sqrt{5}\right)}-i \sqrt{-10+6 \sqrt{5}}\right) & \frac{1}{2} \left(1-\sqrt{5}\right) \\
\end{array}
\right)\: \: \textrm{for} $

$F^{262}_{6}, F^{363}_{6}  $

\item $\left(
\begin{array}{cc}
 \frac{1}{4} \left(3-\sqrt{5}+i \sqrt{10-2 \sqrt{5}}\right) & \frac{1}{4} \left(-i \sqrt{2 \left(1+\sqrt{5}\right)}+\sqrt{-10+6 \sqrt{5}}\right) \\
 \frac{1}{4} i \left(\sqrt{2 \left(1+\sqrt{5}\right)}+i \sqrt{-10+6 \sqrt{5}}\right) & \frac{1}{2} \left(1-\sqrt{5}\right) \\
\end{array}
\right)\: \: \textrm{for} $

$F^{263}_{6}, F^{362}_{6}  $

\item $\left(
\begin{array}{cc}
 -\frac{1}{2} \sqrt{-1+\sqrt{5}} & -\frac{1}{2} \sqrt{5-\sqrt{5}} \\
 \frac{1}{2} i \sqrt{5-\sqrt{5}} & -\frac{1}{2} i \sqrt{-1+\sqrt{5}} \\
\end{array}
\right)\: \: \textrm{for} $

$F^{264}_{5}  $

\item $\left(
\begin{array}{cc}
 -\frac{1}{2} \sqrt{1+\sqrt{5}} & -\frac{\sqrt{2}}{1+\sqrt{5}} \\
 -\frac{i \left(-1+\sqrt{5}\right)}{2 \sqrt{2}} & \frac{1}{2} i \sqrt{1+\sqrt{5}} \\
\end{array}
\right)\: \: \textrm{for} $

$F^{266}_{3}  $

\item $\left(
\begin{array}{cc}
 -\frac{1}{2} \sqrt{1+\sqrt{5}} & \frac{-1+\sqrt{5}}{2 \sqrt{2}} \\
 \frac{\sqrt{2}}{1+\sqrt{5}} & \frac{\sqrt{1+\sqrt{5}}}{2} \\
\end{array}
\right)\: \: \textrm{for} $

$F^{326}_{6}, F^{425}_{6}  $

\item $\left(
\begin{array}{cc}
 \frac{1}{2} \left(1-\sqrt{5}\right) & \sqrt{\frac{1}{2} \left(-1+\sqrt{5}\right)} \\
 \sqrt{\frac{1}{2} \left(-1+\sqrt{5}\right)} & \sqrt{\frac{2}{3+\sqrt{5}}} \\
\end{array}
\right)\: \: \textrm{for} $

$F^{334}_{4}, F^{336}_{6}, F^{422}_{4}, F^{456}_{3}  $

\item $\left(
\begin{array}{cc}
 \frac{1}{2} \left(-1+\sqrt{5}\right) & \sqrt{\frac{1}{2} \left(-1+\sqrt{5}\right)} \\
 -\sqrt{\frac{1}{2} \left(-1+\sqrt{5}\right)} & \sqrt{\frac{2}{3+\sqrt{5}}} \\
\end{array}
\right)\: \: \textrm{for} $

$F^{344}_{3}, F^{424}_{4}, F^{434}_{3}, F^{455}_{4}  $

\item $\left(
\begin{array}{cc}
 -\frac{1}{2} \sqrt{-1+\sqrt{5}} & \frac{1}{2} i \sqrt{5-\sqrt{5}} \\
 \frac{\sqrt{5-\sqrt{5}}}{2} & \frac{1}{2} i \sqrt{-1+\sqrt{5}} \\
\end{array}
\right)\: \: \textrm{for} $

$F^{354}_{6}  $

\item $\left(
\begin{array}{cc}
 -\frac{1}{2} \sqrt{1+\sqrt{5}} & -\frac{i}{\sqrt{3+\sqrt{5}}} \\
 \frac{1}{\sqrt{3+\sqrt{5}}} & -\frac{1}{2} i \sqrt{1+\sqrt{5}} \\
\end{array}
\right)\: \: \textrm{for} $

$F^{356}_{4}  $

\item $\left(
\begin{array}{cc}
 -\frac{1}{2} \sqrt{-1+\sqrt{5}} & -\frac{1}{2} \sqrt{5-\sqrt{5}} \\
 -\frac{1}{2} i \sqrt{5-\sqrt{5}} & \frac{1}{2} i \sqrt{-1+\sqrt{5}} \\
\end{array}
\right)\: \: \textrm{for} $

$F^{364}_{5}  $

\item $\left(
\begin{array}{cc}
 -\frac{1}{2} \sqrt{1+\sqrt{5}} & -\frac{\sqrt{2}}{1+\sqrt{5}} \\
 \frac{i \left(-1+\sqrt{5}\right)}{2 \sqrt{2}} & -\frac{1}{2} i \sqrt{1+\sqrt{5}} \\
\end{array}
\right)\: \: \textrm{for} $

$F^{366}_{2}  $

\item $\left(
\begin{array}{cc}
 -\sqrt{\frac{2}{3+\sqrt{5}}} & -\sqrt{\frac{1}{2} \left(-1+\sqrt{5}\right)} \\
 -\sqrt{\frac{1}{2} \left(-1+\sqrt{5}\right)} & \sqrt{\frac{2}{3+\sqrt{5}}} \\
\end{array}
\right)\: \: \textrm{for} $

$F^{366}_{3}, F^{436}_{5}  $

\item $\left(
\begin{array}{cc}
 \frac{1}{2} \left(-1+\sqrt{5}\right) & -\sqrt{\frac{1}{2} \left(-1+\sqrt{5}\right)} \\
 \sqrt{\frac{1}{2} \left(-1+\sqrt{5}\right)} & \frac{1}{2} \left(-1+\sqrt{5}\right) \\
\end{array}
\right)\: \: \textrm{for} $

$F^{424}_{2}, F^{426}_{5}, F^{434}_{4}, F^{442}_{2}, F^{456}_{2}  $

\item $\left(
\begin{array}{cc}
 \frac{\sqrt{1+\sqrt{5}}}{2} & \frac{\sqrt{3-\sqrt{5}}}{2} \\
 \frac{1}{\sqrt{3+\sqrt{5}}} & -\frac{1}{2} \sqrt{1+\sqrt{5}} \\
\end{array}
\right)\: \: \textrm{for} $

$F^{435}_{6}  $

\item $\left(
\begin{array}{cc}
 \frac{1}{2} \sqrt{-1+\sqrt{5}} & \frac{\sqrt{5-\sqrt{5}}}{2} \\
 \frac{1}{2} i \sqrt{5-\sqrt{5}} & -\frac{1}{2} i \sqrt{-1+\sqrt{5}} \\
\end{array}
\right)\: \: \textrm{for} $

$F^{452}_{6}  $

\item $\left(
\begin{array}{cc}
 \frac{1}{2} \sqrt{-1+\sqrt{5}} & \frac{\sqrt{5-\sqrt{5}}}{2} \\
 -\frac{1}{2} i \sqrt{5-\sqrt{5}} & \frac{1}{2} i \sqrt{-1+\sqrt{5}} \\
\end{array}
\right)\: \: \textrm{for} $

$F^{453}_{6}  $

\item $\left(
\begin{array}{cc}
 \frac{1}{2} \left(-3+\sqrt{5}\right) & -\sqrt{\frac{1}{2} \left(-5+3 \sqrt{5}\right)} \\
 \sqrt{\frac{1}{2} \left(-5+3 \sqrt{5}\right)} & \frac{1}{2} \left(-3+\sqrt{5}\right) \\
\end{array}
\right)\: \: \textrm{for} $

$F^{454}_{5}  $

\item $\left(
\begin{array}{cc}
 \frac{1}{2} \sqrt{-1+\sqrt{5}} & -\frac{1}{2} i \sqrt{5-\sqrt{5}} \\
 -\frac{1}{2} \sqrt{5-\sqrt{5}} & -\frac{1}{2} i \sqrt{-1+\sqrt{5}} \\
\end{array}
\right)\: \: \textrm{for} $

$F^{462}_{5}  $

\item $\left(
\begin{array}{cc}
 \frac{1}{2} \sqrt{-1+\sqrt{5}} & \frac{1}{2} i \sqrt{5-\sqrt{5}} \\
 -\frac{1}{2} \sqrt{5-\sqrt{5}} & \frac{1}{2} i \sqrt{-1+\sqrt{5}} \\
\end{array}
\right)\: \: \textrm{for} $

$F^{463}_{5}  $

\item $\left(
\begin{array}{cc}
 -\frac{1}{2} \sqrt{1+\sqrt{5}} & -\frac{1}{\sqrt{3+\sqrt{5}}} \\
 -\frac{1}{\sqrt{3+\sqrt{5}}} & \frac{\sqrt{1+\sqrt{5}}}{2} \\
\end{array}
\right)\: \: \textrm{for} $

$F^{465}_{2}  $

\item $\left(
\begin{array}{cc}
 \frac{\sqrt{1+\sqrt{5}}}{2} & -\frac{1}{\sqrt{3+\sqrt{5}}} \\
 -\frac{1}{\sqrt{3+\sqrt{5}}} & -\frac{1}{2} \sqrt{1+\sqrt{5}} \\
\end{array}
\right)\: \: \textrm{for} $

$F^{465}_{3}  $

\item $\left(
\begin{array}{cc}
 -1 & 0 \\
 0 & i \\
\end{array}
\right)\: \: \textrm{for} $

$F^{466}_{1}  $

\item $\left(
\begin{array}{cc}
 \frac{1}{2} i \sqrt{1+\sqrt{5}} & -\frac{1}{2} i \sqrt{3-\sqrt{5}} \\
 \frac{i}{\sqrt{3+\sqrt{5}}} & \frac{1}{2} i \sqrt{1+\sqrt{5}} \\
\end{array}
\right)\: \: \textrm{for} $

$F^{524}_{6}  $

\item $\left(
\begin{array}{cc}
 -\frac{1}{2} i \sqrt{-1+\sqrt{5}} & -\frac{1}{2} i \sqrt{5-\sqrt{5}} \\
 \frac{1}{2} i \sqrt{5-\sqrt{5}} & -\frac{1}{2} i \sqrt{-1+\sqrt{5}} \\
\end{array}
\right)\: \: \textrm{for} $

$F^{526}_{4}  $

\item $\left(
\begin{array}{cc}
 -\frac{1}{2} i \sqrt{1+\sqrt{5}} & -\frac{1}{2} i \sqrt{3-\sqrt{5}} \\
 \frac{i}{\sqrt{3+\sqrt{5}}} & -\frac{1}{2} i \sqrt{1+\sqrt{5}} \\
\end{array}
\right)\: \: \textrm{for} $

$F^{534}_{6}  $

\item $\left(
\begin{array}{cc}
 -\frac{1}{2} i \sqrt{-1+\sqrt{5}} & \frac{1}{2} i \sqrt{5-\sqrt{5}} \\
 \frac{1}{2} i \sqrt{5-\sqrt{5}} & \frac{1}{2} i \sqrt{-1+\sqrt{5}} \\
\end{array}
\right)\: \: \textrm{for} $

$F^{536}_{4}  $

\item $\left(
\begin{array}{cc}
 -\frac{1}{2} i \left(-1+\sqrt{5}\right) & i \sqrt{\frac{1}{2} \left(-1+\sqrt{5}\right)} \\
 -i \sqrt{\frac{1}{2} \left(-1+\sqrt{5}\right)} & -\frac{1}{2} i \left(-1+\sqrt{5}\right) \\
\end{array}
\right)\: \: \textrm{for} $

$F^{542}_{6}, F^{654}_{2}  $

\item $\left(
\begin{array}{cc}
 \frac{1}{2} i \left(-1+\sqrt{5}\right) & -i \sqrt{\frac{1}{2} \left(-1+\sqrt{5}\right)} \\
 -i \sqrt{\frac{1}{2} \left(-1+\sqrt{5}\right)} & -\frac{1}{2} i \left(-1+\sqrt{5}\right) \\
\end{array}
\right)\: \: \textrm{for} $

$F^{543}_{6}, F^{554}_{4}  $

\item $\left(
\begin{array}{cc}
 \frac{1}{2} \left(-1+\sqrt{5}\right) & \sqrt{\frac{1}{2} \left(-1+\sqrt{5}\right)} \\
 i \sqrt{\frac{1}{2} \left(-1+\sqrt{5}\right)} & -\frac{1}{2} i \left(-1+\sqrt{5}\right) \\
\end{array}
\right)\: \: \textrm{for} $

$F^{544}_{5}  $

\item $\left(
\begin{array}{cc}
 -\frac{1}{2} i \left(-3+\sqrt{5}\right) & i \sqrt{\frac{1}{2} \left(-5+3 \sqrt{5}\right)} \\
 -i \sqrt{\frac{1}{2} \left(-5+3 \sqrt{5}\right)} & -\frac{1}{2} i \left(-3+\sqrt{5}\right) \\
\end{array}
\right)\: \: \textrm{for} $

$F^{545}_{4}  $

\item $\left(
\begin{array}{cc}
 -\frac{1}{2} i \sqrt{-1+\sqrt{5}} & -\frac{1}{2} i \sqrt{5-\sqrt{5}} \\
 \frac{\sqrt{5-\sqrt{5}}}{2} & -\frac{1}{2} \sqrt{-1+\sqrt{5}} \\
\end{array}
\right)\: \: \textrm{for} $

$F^{546}_{2}  $

\item $\left(
\begin{array}{cc}
 -\frac{1}{2} i \sqrt{-1+\sqrt{5}} & -\frac{1}{2} i \sqrt{5-\sqrt{5}} \\
 -\frac{1}{2} \sqrt{5-\sqrt{5}} & \frac{1}{2} \sqrt{-1+\sqrt{5}} \\
\end{array}
\right)\: \: \textrm{for} $

$F^{546}_{3}  $

\item $\left(
\begin{array}{cc}
 -\sqrt{\frac{1}{10} \left(5-\sqrt{5}\right)} & -\sqrt{\frac{1}{10} \left(5+\sqrt{5}\right)} \\
 -i \sqrt{\frac{1}{10} \left(5+\sqrt{5}\right)} & i \sqrt{\frac{1}{10} \left(5-\sqrt{5}\right)} \\
\end{array}
\right)\: \: \textrm{for} $

$F^{555}_{5}  $

\item $\left(
\begin{array}{cc}
 \frac{1}{2} i \left(-1+\sqrt{5}\right) & -i \sqrt{\frac{1}{2} \left(-1+\sqrt{5}\right)} \\
 i \sqrt{\frac{1}{2} \left(-1+\sqrt{5}\right)} & \frac{1}{2} i \left(-1+\sqrt{5}\right) \\
\end{array}
\right)\: \: \textrm{for} $

$F^{562}_{4}, F^{624}_{5}  $

\item $\left(
\begin{array}{cc}
 -\frac{1}{2} i \left(-1+\sqrt{5}\right) & -i \sqrt{\frac{1}{2} \left(-1+\sqrt{5}\right)} \\
 -i \sqrt{\frac{1}{2} \left(-1+\sqrt{5}\right)} & \frac{1}{2} i \left(-1+\sqrt{5}\right) \\
\end{array}
\right)\: \: \textrm{for} $

$F^{563}_{4}  $

\item $\left(
\begin{array}{cc}
 -\frac{1}{2} i \sqrt{1+\sqrt{5}} & \frac{i}{\sqrt{3+\sqrt{5}}} \\
 -\frac{i}{\sqrt{3+\sqrt{5}}} & -\frac{1}{2} i \sqrt{1+\sqrt{5}} \\
\end{array}
\right)\: \: \textrm{for} $

$F^{564}_{2}  $

\item $\left(
\begin{array}{cc}
 \frac{1}{2} i \sqrt{1+\sqrt{5}} & \frac{i}{\sqrt{3+\sqrt{5}}} \\
 -\frac{i}{\sqrt{3+\sqrt{5}}} & \frac{1}{2} i \sqrt{1+\sqrt{5}} \\
\end{array}
\right)\: \: \textrm{for} $

$F^{564}_{3}, F^{623}_{6}  $

\item $\left(
\begin{array}{cc}
 -\sqrt{\frac{2}{3+\sqrt{5}}} & -\sqrt{\frac{1}{2} \left(-1+\sqrt{5}\right)} \\
 -i \sqrt{\frac{1}{2} \left(-1+\sqrt{5}\right)} & i \sqrt{\frac{1}{2} \left(3-\sqrt{5}\right)} \\
\end{array}
\right)\: \: \textrm{for} $

$F^{622}_{6}  $

\item $\left(
\begin{array}{cc}
 \frac{1}{2} i \sqrt{-1+\sqrt{5}} & \frac{1}{2} i \sqrt{5-\sqrt{5}} \\
 \frac{1}{2} i \sqrt{5-\sqrt{5}} & -\frac{1}{2} i \sqrt{-1+\sqrt{5}} \\
\end{array}
\right)\: \: \textrm{for} $

$F^{625}_{4}  $

\item $\left(
\begin{array}{cc}
 \frac{1}{4} \left(\sqrt{10-2 \sqrt{5}}+i \left(-3+\sqrt{5}\right)\right) & -\frac{\sqrt{1+\sqrt{5}}+i \sqrt{-5+3 \sqrt{5}}}{2 \sqrt{2}} \\
 \frac{\sqrt{1+\sqrt{5}}+i \sqrt{-5+3 \sqrt{5}}}{2 \sqrt{2}} & \frac{1}{2} i \left(-1+\sqrt{5}\right) \\
\end{array}
\right)\: \: \textrm{for} $

$F^{626}_{2}, F^{636}_{3}  $

\item $\left(
\begin{array}{cc}
 \frac{1}{4} i \left(-3+\sqrt{5}+i \sqrt{10-2 \sqrt{5}}\right) & \frac{\sqrt{1+\sqrt{5}}-i \sqrt{-5+3 \sqrt{5}}}{2 \sqrt{2}} \\
 -\frac{\sqrt{1+\sqrt{5}}-i \sqrt{-5+3 \sqrt{5}}}{2 \sqrt{2}} & \frac{1}{2} i \left(-1+\sqrt{5}\right) \\
\end{array}
\right)\: \: \textrm{for} $

$F^{626}_{3}, F^{636}_{2}  $

\item $\left(
\begin{array}{cc}
 \frac{1}{2} i \sqrt{1+\sqrt{5}} & \frac{i \sqrt{2}}{1+\sqrt{5}} \\
 \frac{i \sqrt{2}}{1+\sqrt{5}} & -\frac{1}{2} i \sqrt{1+\sqrt{5}} \\
\end{array}
\right)\: \: \textrm{for} $

$F^{632}_{6}  $

\item $\left(
\begin{array}{cc}
 -\sqrt{\frac{2}{3+\sqrt{5}}} & -\sqrt{\frac{1}{2} \left(-1+\sqrt{5}\right)} \\
 i \sqrt{\frac{1}{2} \left(-1+\sqrt{5}\right)} & -i \sqrt{\frac{1}{2} \left(3-\sqrt{5}\right)} \\
\end{array}
\right)\: \: \textrm{for} $

$F^{633}_{6}  $

\item $\left(
\begin{array}{cc}
 -\frac{1}{2} i \left(-1+\sqrt{5}\right) & i \sqrt{\frac{1}{2} \left(-1+\sqrt{5}\right)} \\
 i \sqrt{\frac{1}{2} \left(-1+\sqrt{5}\right)} & \frac{1}{2} i \left(-1+\sqrt{5}\right) \\
\end{array}
\right)\: \: \textrm{for} $

$F^{634}_{5}  $

\item $\left(
\begin{array}{cc}
 \frac{1}{2} i \sqrt{-1+\sqrt{5}} & \frac{1}{2} i \sqrt{5-\sqrt{5}} \\
 -\frac{1}{2} i \sqrt{5-\sqrt{5}} & \frac{1}{2} i \sqrt{-1+\sqrt{5}} \\
\end{array}
\right)\: \: \textrm{for} $

$F^{635}_{4}  $

\item $\left(
\begin{array}{cc}
 -\frac{1}{2} i \sqrt{1+\sqrt{5}} & -\frac{1}{2} i \sqrt{3-\sqrt{5}} \\
 -\frac{i}{\sqrt{3+\sqrt{5}}} & \frac{1}{2} i \sqrt{1+\sqrt{5}} \\
\end{array}
\right)\: \: \textrm{for} $

$F^{642}_{5}  $

\item $\left(
\begin{array}{cc}
 \frac{1}{2} i \sqrt{1+\sqrt{5}} & -\frac{1}{2} i \sqrt{3-\sqrt{5}} \\
 -\frac{i}{\sqrt{3+\sqrt{5}}} & -\frac{1}{2} i \sqrt{1+\sqrt{5}} \\
\end{array}
\right)\: \: \textrm{for} $

$F^{643}_{5}  $

\item $\left(
\begin{array}{cc}
 \frac{1}{2} i \sqrt{-1+\sqrt{5}} & -\frac{1}{2} \sqrt{5-\sqrt{5}} \\
 -\frac{1}{2} i \sqrt{5-\sqrt{5}} & -\frac{1}{2} \sqrt{-1+\sqrt{5}} \\
\end{array}
\right)\: \: \textrm{for} $

$F^{645}_{2}  $

\item $\left(
\begin{array}{cc}
 \frac{1}{2} i \sqrt{-1+\sqrt{5}} & \frac{\sqrt{5-\sqrt{5}}}{2} \\
 -\frac{1}{2} i \sqrt{5-\sqrt{5}} & \frac{1}{2} \sqrt{-1+\sqrt{5}} \\
\end{array}
\right)\: \: \textrm{for} $

$F^{645}_{3}  $

\item $\left(
\begin{array}{cc}
 i & 0 \\
 0 & -i \\
\end{array}
\right)\: \: \textrm{for} $

$F^{646}_{1}  $

\item $\left(
\begin{array}{cc}
 \frac{1}{2} i \sqrt{1+\sqrt{5}} & -\frac{1}{2} i \sqrt{3-\sqrt{5}} \\
 \frac{1}{\sqrt{3+\sqrt{5}}} & \frac{\sqrt{1+\sqrt{5}}}{2} \\
\end{array}
\right)\: \: \textrm{for} $

$F^{652}_{4}  $

\item $\left(
\begin{array}{cc}
 -\frac{1}{2} i \sqrt{1+\sqrt{5}} & -\frac{1}{2} i \sqrt{3-\sqrt{5}} \\
 \frac{1}{\sqrt{3+\sqrt{5}}} & -\frac{1}{2} \sqrt{1+\sqrt{5}} \\
\end{array}
\right)\: \: \textrm{for} $

$F^{653}_{4}  $

\item $\left(
\begin{array}{cc}
 \frac{1}{2} i \left(-1+\sqrt{5}\right) & i \sqrt{\frac{1}{2} \left(-1+\sqrt{5}\right)} \\
 i \sqrt{\frac{1}{2} \left(-1+\sqrt{5}\right)} & -\frac{1}{2} i \left(-1+\sqrt{5}\right) \\
\end{array}
\right)\: \: \textrm{for} $

$F^{654}_{3}  $

\item $\left(
\begin{array}{cc}
 -i \sqrt{\frac{2}{3+\sqrt{5}}} & -i \sqrt{\frac{1}{2} \left(-1+\sqrt{5}\right)} \\
 i \sqrt{\frac{1}{2} \left(-1+\sqrt{5}\right)} & -i \sqrt{\frac{2}{3+\sqrt{5}}} \\
\end{array}
\right)\: \: \textrm{for} $

$F^{662}_{2}  $

\item $\left(
\begin{array}{cc}
 -\frac{1}{2} i \sqrt{1+\sqrt{5}} & \frac{\sqrt{2}}{1+\sqrt{5}} \\
 \frac{i \sqrt{2}}{1+\sqrt{5}} & \frac{\sqrt{1+\sqrt{5}}}{2} \\
\end{array}
\right)\: \: \textrm{for} $

$F^{662}_{3}  $

\item $\left(
\begin{array}{cc}
 -\frac{1}{2} i \sqrt{1+\sqrt{5}} & -\frac{\sqrt{2}}{1+\sqrt{5}} \\
 \frac{i \sqrt{2}}{1+\sqrt{5}} & -\frac{1}{2} \sqrt{1+\sqrt{5}} \\
\end{array}
\right)\: \: \textrm{for} $

$F^{663}_{2}  $

\item $\left(
\begin{array}{cc}
 i \sqrt{\frac{2}{3+\sqrt{5}}} & i \sqrt{\frac{1}{2} \left(-1+\sqrt{5}\right)} \\
 -i \sqrt{\frac{1}{2} \left(-1+\sqrt{5}\right)} & i \sqrt{\frac{2}{3+\sqrt{5}}} \\
\end{array}
\right)\: \: \textrm{for} $

$F^{663}_{3}  $

\item $\left(
\begin{array}{cc}
 i & 0 \\
 0 & 1 \\
\end{array}
\right)\: \: \textrm{for} $

$F^{664}_{1}  $

\item $\left(
\begin{array}{ccc}
 \frac{1}{2} \left(-1+\sqrt{5}\right) & \sqrt{\frac{1}{2} \left(-2+\sqrt{5}\right)} & -\frac{1}{\sqrt{2}} \\
 \sqrt{\frac{1}{2} \left(-2+\sqrt{5}\right)} & \frac{1}{4} \left(5-\sqrt{5}\right) & \frac{1}{2} \sqrt{\frac{1}{2} \left(1+\sqrt{5}\right)} \\
 -\frac{1}{\sqrt{2}} & \frac{1}{2} \sqrt{\frac{1}{2} \left(1+\sqrt{5}\right)} & \frac{1}{4} \left(1-\sqrt{5}\right) \\
\end{array}
\right)\: \: \textrm{for} $

$F^{246}_{6}  $

\item $\left(
\begin{array}{ccc}
 \frac{1}{2} \left(1-\sqrt{5}\right) & \frac{1}{2} \sqrt{-5+3 \sqrt{5}} & -\frac{i}{\sqrt{3+\sqrt{5}}} \\
 -\frac{1}{2} \sqrt{-5+3 \sqrt{5}} & 1-\frac{\sqrt{5}}{2} & \frac{i \sqrt[4]{5}}{2} \\
 -\frac{i}{\sqrt{3+\sqrt{5}}} & -\frac{1}{2} i \sqrt[4]{5} & \frac{1}{2} \\
\end{array}
\right)\: \: \textrm{for} $

$F^{264}_{6}, F^{463}_{6}  $

\item $\left(
\begin{array}{ccc}
 \frac{1}{2} \left(-1+\sqrt{5}\right) & -\sqrt{\frac{1}{2} \left(-2+\sqrt{5}\right)} & -\frac{i}{\sqrt{2}} \\
 -\sqrt{\frac{1}{2} \left(-2+\sqrt{5}\right)} & \frac{1}{4} \left(5-\sqrt{5}\right) & -\frac{1}{2} i \sqrt{\frac{1}{2} \left(1+\sqrt{5}\right)} \\
 \frac{i}{\sqrt{2}} & \frac{1}{2} i \sqrt{\frac{1}{2} \left(1+\sqrt{5}\right)} & \frac{1}{4} \left(1-\sqrt{5}\right) \\
\end{array}
\right)\: \: \textrm{for} $

$F^{266}_{4}  $

\item $\left(
\begin{array}{ccc}
 \frac{1}{2} \left(1-\sqrt{5}\right) & \sqrt{\frac{1}{2} \left(-2+\sqrt{5}\right)} & \frac{1}{\sqrt{2}} \\
 -\sqrt{\frac{1}{2} \left(-2+\sqrt{5}\right)} & \frac{1}{4} \left(5-\sqrt{5}\right) & -\frac{1}{2} \sqrt{\frac{1}{2} \left(1+\sqrt{5}\right)} \\
 -\frac{1}{\sqrt{2}} & -\frac{1}{2} \sqrt{\frac{1}{2} \left(1+\sqrt{5}\right)} & \frac{1}{4} \left(1-\sqrt{5}\right) \\
\end{array}
\right)\: \: \textrm{for} $

$F^{346}_{6}  $

\item $\left(
\begin{array}{ccc}
 \frac{1}{2} \left(1-\sqrt{5}\right) & \frac{1}{2} \sqrt{-5+3 \sqrt{5}} & \frac{i}{\sqrt{3+\sqrt{5}}} \\
 -\frac{1}{2} \sqrt{-5+3 \sqrt{5}} & 1-\frac{\sqrt{5}}{2} & -\frac{1}{2} i \sqrt[4]{5} \\
 \frac{i}{\sqrt{3+\sqrt{5}}} & \frac{i \sqrt[4]{5}}{2} & \frac{1}{2} \\
\end{array}
\right)\: \: \textrm{for} $

$F^{364}_{6}, F^{462}_{6}  $

\item $\left(
\begin{array}{ccc}
 \frac{1}{2} \left(1-\sqrt{5}\right) & \sqrt{\frac{1}{2} \left(-2+\sqrt{5}\right)} & -\frac{i}{\sqrt{2}} \\
 -\sqrt{\frac{1}{2} \left(-2+\sqrt{5}\right)} & \frac{1}{4} \left(5-\sqrt{5}\right) & \frac{1}{2} i \sqrt{\frac{1}{2} \left(1+\sqrt{5}\right)} \\
 -\frac{i}{\sqrt{2}} & -\frac{1}{2} i \sqrt{\frac{1}{2} \left(1+\sqrt{5}\right)} & \frac{1}{4} \left(1-\sqrt{5}\right) \\
\end{array}
\right)\: \: \textrm{for} $

$F^{366}_{4}  $

\item $\left(
\begin{array}{ccc}
 \frac{1}{2} \left(1-\sqrt{5}\right) & -\sqrt{\frac{1}{2} \left(-2+\sqrt{5}\right)} & -\frac{1}{\sqrt{2}} \\
 -\sqrt{\frac{1}{2} \left(-2+\sqrt{5}\right)} & \frac{1}{4} \left(-5+\sqrt{5}\right) & \frac{1}{2} \sqrt{\frac{1}{2} \left(1+\sqrt{5}\right)} \\
 -\frac{1}{\sqrt{2}} & \frac{1}{2} \sqrt{\frac{1}{2} \left(1+\sqrt{5}\right)} & \frac{1}{4} \left(-1+\sqrt{5}\right) \\
\end{array}
\right)\: \: \textrm{for} $

$F^{426}_{6}  $

\item $\left(
\begin{array}{ccc}
 \frac{1}{2} \left(-1+\sqrt{5}\right) & -\sqrt{\frac{1}{2} \left(-2+\sqrt{5}\right)} & \frac{1}{\sqrt{2}} \\
 \sqrt{\frac{1}{2} \left(-2+\sqrt{5}\right)} & \frac{1}{4} \left(-5+\sqrt{5}\right) & -\frac{1}{2} \sqrt{\frac{1}{2} \left(1+\sqrt{5}\right)} \\
 -\frac{1}{\sqrt{2}} & -\frac{1}{2} \sqrt{\frac{1}{2} \left(1+\sqrt{5}\right)} & \frac{1}{4} \left(-1+\sqrt{5}\right) \\
\end{array}
\right)\: \: \textrm{for} $

$F^{436}_{6}  $

\item $\left(
\begin{array}{ccc}
 \sqrt{\frac{1}{2} \left(3-\sqrt{5}\right)} & -\sqrt{\frac{1}{2} \left(3-\sqrt{5}\right)} & \sqrt{-2+\sqrt{5}} \\
 -\sqrt{\frac{1}{2} \left(-2+\sqrt{5}\right)} & \sqrt{\frac{1}{2} \left(-2+\sqrt{5}\right)} & \sqrt{3-\sqrt{5}} \\
 \frac{1}{\sqrt{2}} & \frac{1}{\sqrt{2}} & 0 \\
\end{array}
\right)\: \: \textrm{for} $

$F^{445}_{6}  $

\item $\left(
\begin{array}{ccc}
 \sqrt{\frac{1}{2} \left(3-\sqrt{5}\right)} & -\sqrt{\frac{1}{2} \left(3-\sqrt{5}\right)} & \sqrt{-2+\sqrt{5}} \\
 -\sqrt{\frac{1}{2} \left(-2+\sqrt{5}\right)} & \sqrt{\frac{1}{2} \left(-2+\sqrt{5}\right)} & \sqrt{3-\sqrt{5}} \\
 -\frac{1}{\sqrt{2}} & -\frac{1}{\sqrt{2}} & 0 \\
\end{array}
\right)\: \: \textrm{for} $

$F^{446}_{5}  $

\item $\left(
\begin{array}{ccc}
 -\sqrt{5-2 \sqrt{5}} & \sqrt{2 \left(-2+\sqrt{5}\right)} & 0 \\
 -\sqrt{2 \left(-2+\sqrt{5}\right)} & -\sqrt{5-2 \sqrt{5}} & 0 \\
 0 & 0 & i \\
\end{array}
\right)\: \: \textrm{for} $

$F^{454}_{6}  $

\item $\left(
\begin{array}{ccc}
 -\sqrt{\frac{1}{2} \left(3-\sqrt{5}\right)} & -\sqrt{\frac{1}{2} \left(-2+\sqrt{5}\right)} & \frac{i}{\sqrt{2}} \\
 \sqrt{\frac{1}{2} \left(3-\sqrt{5}\right)} & \sqrt{\frac{1}{2} \left(-2+\sqrt{5}\right)} & \frac{i}{\sqrt{2}} \\
 \sqrt{-2+\sqrt{5}} & -\sqrt{3-\sqrt{5}} & 0 \\
\end{array}
\right)\: \: \textrm{for} $

$F^{456}_{4}  $

\item $\left(
\begin{array}{ccc}
 \sqrt{5-2 \sqrt{5}} & -\sqrt{2 \left(-2+\sqrt{5}\right)} & 0 \\
 \sqrt{2 \left(-2+\sqrt{5}\right)} & \sqrt{5-2 \sqrt{5}} & 0 \\
 0 & 0 & -i \\
\end{array}
\right)\: \: \textrm{for} $

$F^{464}_{5}  $

\item $\left(
\begin{array}{ccc}
 \sqrt{\frac{1}{2} \left(3-\sqrt{5}\right)} & \sqrt{\frac{1}{2} \left(-2+\sqrt{5}\right)} & \frac{1}{\sqrt{2}} \\
 -\sqrt{\frac{1}{2} \left(3-\sqrt{5}\right)} & -\sqrt{\frac{1}{2} \left(-2+\sqrt{5}\right)} & \frac{1}{\sqrt{2}} \\
 -\sqrt{-2+\sqrt{5}} & \sqrt{3-\sqrt{5}} & 0 \\
\end{array}
\right)\: \: \textrm{for} $

$F^{465}_{4}  $

\item $\left(
\begin{array}{ccc}
 \frac{1}{2} \left(-1+\sqrt{5}\right) & -\sqrt{\frac{1}{2} \left(-2+\sqrt{5}\right)} & \frac{1}{\sqrt{2}} \\
 -\sqrt{\frac{1}{2} \left(-2+\sqrt{5}\right)} & \frac{1}{4} \left(5-\sqrt{5}\right) & \frac{1}{2} \sqrt{\frac{1}{2} \left(1+\sqrt{5}\right)} \\
 -\frac{i}{\sqrt{2}} & -\frac{1}{2} i \sqrt{\frac{1}{2} \left(1+\sqrt{5}\right)} & \frac{1}{4} i \left(-1+\sqrt{5}\right) \\
\end{array}
\right)\: \: \textrm{for} $

$F^{466}_{2}  $

\item $\left(
\begin{array}{ccc}
 \frac{1}{2} \left(1-\sqrt{5}\right) & \sqrt{\frac{1}{2} \left(-2+\sqrt{5}\right)} & \frac{1}{\sqrt{2}} \\
 -\sqrt{\frac{1}{2} \left(-2+\sqrt{5}\right)} & \frac{1}{4} \left(5-\sqrt{5}\right) & -\frac{1}{2} \sqrt{\frac{1}{2} \left(1+\sqrt{5}\right)} \\
 \frac{i}{\sqrt{2}} & \frac{1}{2} i \sqrt{\frac{1}{2} \left(1+\sqrt{5}\right)} & \frac{1}{4} i \left(-1+\sqrt{5}\right) \\
\end{array}
\right)\: \: \textrm{for} $

$F^{466}_{3}  $

\item $\left(
\begin{array}{ccc}
 i \sqrt{\frac{1}{2} \left(3-\sqrt{5}\right)} & i \sqrt{\frac{1}{2} \left(-2+\sqrt{5}\right)} & -\frac{i}{\sqrt{2}} \\
 -i \sqrt{\frac{1}{2} \left(3-\sqrt{5}\right)} & -i \sqrt{\frac{1}{2} \left(-2+\sqrt{5}\right)} & -\frac{i}{\sqrt{2}} \\
 -i \sqrt{-2+\sqrt{5}} & i \sqrt{3-\sqrt{5}} & 0 \\
\end{array}
\right)\: \: \textrm{for} $

$F^{544}_{6}  $

\item $\left(
\begin{array}{ccc}
 i \sqrt{5-2 \sqrt{5}} & -i \sqrt{2 \left(-2+\sqrt{5}\right)} & 0 \\
 i \sqrt{2 \left(-2+\sqrt{5}\right)} & i \sqrt{5-2 \sqrt{5}} & 0 \\
 0 & 0 & i \\
\end{array}
\right)\: \: \textrm{for} $

$F^{546}_{4}, F^{645}_{4}  $

\item $\left(
\begin{array}{ccc}
 -i \sqrt{\frac{1}{10} \left(5-\sqrt{5}\right)} & -\frac{i}{\sqrt{2} \sqrt[4]{5}} & -\frac{1}{\sqrt{2}} \\
 i \sqrt{\frac{1}{10} \left(5-\sqrt{5}\right)} & \frac{i}{\sqrt{2} \sqrt[4]{5}} & -\frac{1}{\sqrt{2}} \\
 \frac{i}{\sqrt[4]{5}} & -i \sqrt{1-\frac{1}{\sqrt{5}}} & 0 \\
\end{array}
\right)\: \: \textrm{for} $

$F^{556}_{6}  $

\item $\left(
\begin{array}{ccc}
 -i \sqrt{\frac{1}{2} \left(3-\sqrt{5}\right)} & i \sqrt{\frac{1}{2} \left(3-\sqrt{5}\right)} & -i \sqrt{-2+\sqrt{5}} \\
 i \sqrt{\frac{1}{2} \left(-2+\sqrt{5}\right)} & -i \sqrt{\frac{1}{2} \left(-2+\sqrt{5}\right)} & -i \sqrt{3-\sqrt{5}} \\
 \frac{i}{\sqrt{2}} & \frac{i}{\sqrt{2}} & 0 \\
\end{array}
\right)\: \: \textrm{for} $

$F^{564}_{4}  $

\item $\left(
\begin{array}{ccc}
 -\frac{1}{2}+\frac{1}{2} i \sqrt{1-\frac{2}{\sqrt{5}}} & -\frac{1}{2}-\frac{1}{2} i \sqrt{1-\frac{2}{\sqrt{5}}} & -\frac{i}{\sqrt[4]{5}} \\
 -\frac{1}{2}-\frac{1}{2} i \sqrt{1-\frac{2}{\sqrt{5}}} & -\frac{1}{2}+\frac{1}{2} i \sqrt{1-\frac{2}{\sqrt{5}}} & \frac{i}{\sqrt[4]{5}} \\
 \frac{i}{\sqrt[4]{5}} & -\frac{i}{\sqrt[4]{5}} & i \sqrt{1-\frac{2}{\sqrt{5}}} \\
\end{array}
\right)\: \: \textrm{for} $

$F^{565}_{6}  $

\item $\left(
\begin{array}{ccc}
 -\sqrt{\frac{1}{10} \left(5-\sqrt{5}\right)} & \sqrt{\frac{1}{10} \left(5-\sqrt{5}\right)} & -\frac{1}{\sqrt[4]{5}} \\
 -\frac{i}{\sqrt{2} \sqrt[4]{5}} & \frac{i}{\sqrt{2} \sqrt[4]{5}} & i \sqrt{1-\frac{1}{\sqrt{5}}} \\
 -\frac{i}{\sqrt{2}} & -\frac{i}{\sqrt{2}} & 0 \\
\end{array}
\right)\: \: \textrm{for} $

$F^{566}_{5}  $

\item $\left(
\begin{array}{ccc}
 -\frac{1}{2} i \left(-1+\sqrt{5}\right) & i \sqrt{\frac{1}{2} \left(-2+\sqrt{5}\right)} & \frac{i}{\sqrt{2}} \\
 i \sqrt{\frac{1}{2} \left(-2+\sqrt{5}\right)} & \frac{1}{4} i \left(-5+\sqrt{5}\right) & \frac{1}{2} i \sqrt{\frac{1}{2} \left(1+\sqrt{5}\right)} \\
 \frac{i}{\sqrt{2}} & \frac{1}{2} i \sqrt{\frac{1}{2} \left(1+\sqrt{5}\right)} & \frac{i}{1+\sqrt{5}} \\
\end{array}
\right)\: \: \textrm{for} $

$F^{624}_{6}  $

\item $\left(
\begin{array}{ccc}
 \frac{1}{2} i \left(-1+\sqrt{5}\right) & -\frac{1}{2} i \sqrt{-5+3 \sqrt{5}} & \frac{i}{\sqrt{3+\sqrt{5}}} \\
 \frac{1}{2} i \sqrt{-5+3 \sqrt{5}} & \frac{1}{2} i \left(-2+\sqrt{5}\right) & -\frac{1}{2} i \sqrt[4]{5} \\
 -\frac{i}{\sqrt{3+\sqrt{5}}} & -\frac{1}{2} i \sqrt[4]{5} & -\frac{i}{2} \\
\end{array}
\right)\: \: \textrm{for} $

$F^{626}_{4}  $

\item $\left(
\begin{array}{ccc}
 \frac{1}{2} i \left(-1+\sqrt{5}\right) & -i \sqrt{\frac{1}{2} \left(-2+\sqrt{5}\right)} & \frac{i}{\sqrt{2}} \\
 i \sqrt{\frac{1}{2} \left(-2+\sqrt{5}\right)} & \frac{1}{4} i \left(-5+\sqrt{5}\right) & -\frac{1}{2} i \sqrt{\frac{1}{2} \left(1+\sqrt{5}\right)} \\
 -\frac{i}{\sqrt{2}} & -\frac{1}{2} i \sqrt{\frac{1}{2} \left(1+\sqrt{5}\right)} & \frac{i}{1+\sqrt{5}} \\
\end{array}
\right)\: \: \textrm{for} $

$F^{634}_{6}  $

\item $\left(
\begin{array}{ccc}
 \frac{1}{2} i \left(-1+\sqrt{5}\right) & -\frac{1}{2} i \sqrt{-5+3 \sqrt{5}} & -\frac{i}{\sqrt{3+\sqrt{5}}} \\
 \frac{1}{2} i \sqrt{-5+3 \sqrt{5}} & \frac{1}{2} i \left(-2+\sqrt{5}\right) & \frac{i \sqrt[4]{5}}{2} \\
 \frac{i}{\sqrt{3+\sqrt{5}}} & \frac{i \sqrt[4]{5}}{2} & -\frac{i}{2} \\
\end{array}
\right)\: \: \textrm{for} $

$F^{636}_{4}  $

\item $\left(
\begin{array}{ccc}
 \frac{1}{2} i \left(-1+\sqrt{5}\right) & -i \sqrt{\frac{1}{2} \left(-2+\sqrt{5}\right)} & \frac{i}{\sqrt{2}} \\
 -i \sqrt{\frac{1}{2} \left(-2+\sqrt{5}\right)} & -\frac{1}{4} i \left(-5+\sqrt{5}\right) & \frac{1}{2} i \sqrt{\frac{1}{2} \left(1+\sqrt{5}\right)} \\
 \frac{i}{\sqrt{2}} & \frac{1}{2} i \sqrt{\frac{1}{2} \left(1+\sqrt{5}\right)} & -\frac{1}{4} i \left(-1+\sqrt{5}\right) \\
\end{array}
\right)\: \: \textrm{for} $

$F^{642}_{6}  $

\item $\left(
\begin{array}{ccc}
 -\frac{1}{2} i \left(-1+\sqrt{5}\right) & i \sqrt{\frac{1}{2} \left(-2+\sqrt{5}\right)} & \frac{i}{\sqrt{2}} \\
 -i \sqrt{\frac{1}{2} \left(-2+\sqrt{5}\right)} & -\frac{1}{4} i \left(-5+\sqrt{5}\right) & -\frac{1}{2} i \sqrt{\frac{1}{2} \left(1+\sqrt{5}\right)} \\
 -\frac{i}{\sqrt{2}} & -\frac{1}{2} i \sqrt{\frac{1}{2} \left(1+\sqrt{5}\right)} & -\frac{1}{4} i \left(-1+\sqrt{5}\right) \\
\end{array}
\right)\: \: \textrm{for} $

$F^{643}_{6}  $

\item $\left(
\begin{array}{ccc}
 i \sqrt{\frac{1}{2} \left(3-\sqrt{5}\right)} & i \sqrt{\frac{1}{2} \left(-2+\sqrt{5}\right)} & \frac{i}{\sqrt{2}} \\
 -i \sqrt{\frac{1}{2} \left(3-\sqrt{5}\right)} & -i \sqrt{\frac{1}{2} \left(-2+\sqrt{5}\right)} & \frac{i}{\sqrt{2}} \\
 -i \sqrt{-2+\sqrt{5}} & i \sqrt{3-\sqrt{5}} & 0 \\
\end{array}
\right)\: \: \textrm{for} $

$F^{644}_{5}  $

\item $\left(
\begin{array}{ccc}
 -\frac{1}{2} i \left(-1+\sqrt{5}\right) & \frac{1}{2} i \sqrt{-5+3 \sqrt{5}} & \frac{1}{\sqrt{3+\sqrt{5}}} \\
 -\frac{1}{2} i \sqrt{-5+3 \sqrt{5}} & -\frac{1}{2} i \left(-2+\sqrt{5}\right) & -\frac{\sqrt[4]{5}}{2} \\
 -\frac{1}{\sqrt{3+\sqrt{5}}} & -\frac{\sqrt[4]{5}}{2} & -\frac{i}{2} \\
\end{array}
\right)\: \: \textrm{for} $

$F^{646}_{2}  $

\item $\left(
\begin{array}{ccc}
 -\frac{1}{2} i \left(-1+\sqrt{5}\right) & \frac{1}{2} i \sqrt{-5+3 \sqrt{5}} & -\frac{1}{\sqrt{3+\sqrt{5}}} \\
 -\frac{1}{2} i \sqrt{-5+3 \sqrt{5}} & -\frac{1}{2} i \left(-2+\sqrt{5}\right) & \frac{\sqrt[4]{5}}{2} \\
 \frac{1}{\sqrt{3+\sqrt{5}}} & \frac{\sqrt[4]{5}}{2} & -\frac{i}{2} \\
\end{array}
\right)\: \: \textrm{for} $

$F^{646}_{3}  $

\item $\left(
\begin{array}{ccc}
 i \sqrt{\frac{1}{2} \left(3-\sqrt{5}\right)} & -i \sqrt{\frac{1}{2} \left(3-\sqrt{5}\right)} & i \sqrt{-2+\sqrt{5}} \\
 -i \sqrt{\frac{1}{2} \left(-2+\sqrt{5}\right)} & i \sqrt{\frac{1}{2} \left(-2+\sqrt{5}\right)} & i \sqrt{3-\sqrt{5}} \\
 -\frac{1}{\sqrt{2}} & -\frac{1}{\sqrt{2}} & 0 \\
\end{array}
\right)\: \: \textrm{for} $

$F^{654}_{4}  $

\item $\left(
\begin{array}{ccc}
 -\sqrt{\frac{1}{10} \left(5-\sqrt{5}\right)} & \sqrt{\frac{1}{10} \left(5-\sqrt{5}\right)} & -\frac{1}{\sqrt[4]{5}} \\
 \frac{i}{\sqrt{2} \sqrt[4]{5}} & -\frac{i}{\sqrt{2} \sqrt[4]{5}} & -i \sqrt{1-\frac{1}{\sqrt{5}}} \\
 \frac{1}{\sqrt{2}} & \frac{1}{\sqrt{2}} & 0 \\
\end{array}
\right)\: \: \textrm{for} $

$F^{655}_{6}  $

\item $\left(
\begin{array}{ccc}
 -\frac{1}{2}-\frac{1}{2} i \sqrt{1-\frac{2}{\sqrt{5}}} & -\frac{1}{2}+\frac{1}{2} i \sqrt{1-\frac{2}{\sqrt{5}}} & \frac{i}{\sqrt[4]{5}} \\
 -\frac{1}{2}+\frac{1}{2} i \sqrt{1-\frac{2}{\sqrt{5}}} & -\frac{1}{2}-\frac{1}{2} i \sqrt{1-\frac{2}{\sqrt{5}}} & -\frac{i}{\sqrt[4]{5}} \\
 -\frac{i}{\sqrt[4]{5}} & \frac{i}{\sqrt[4]{5}} & -i \sqrt{1-\frac{2}{\sqrt{5}}} \\
\end{array}
\right)\: \: \textrm{for} $

$F^{656}_{5}  $

\item $\left(
\begin{array}{ccc}
 -\frac{1}{2} i \left(-1+\sqrt{5}\right) & -i \sqrt{\frac{1}{2} \left(-2+\sqrt{5}\right)} & -\frac{1}{\sqrt{2}} \\
 -i \sqrt{\frac{1}{2} \left(-2+\sqrt{5}\right)} & \frac{1}{4} i \left(-5+\sqrt{5}\right) & \frac{1}{2} \sqrt{\frac{1}{2} \left(1+\sqrt{5}\right)} \\
 \frac{1}{\sqrt{2}} & -\frac{1}{2} \sqrt{\frac{1}{2} \left(1+\sqrt{5}\right)} & \frac{1}{4} i \left(-1+\sqrt{5}\right) \\
\end{array}
\right)\: \: \textrm{for} $

$F^{662}_{4}  $

\item $\left(
\begin{array}{ccc}
 \frac{1}{2} i \left(-1+\sqrt{5}\right) & -i \sqrt{\frac{1}{2} \left(-2+\sqrt{5}\right)} & \frac{1}{\sqrt{2}} \\
 i \sqrt{\frac{1}{2} \left(-2+\sqrt{5}\right)} & \frac{1}{4} i \left(-5+\sqrt{5}\right) & -\frac{1}{2} \sqrt{\frac{1}{2} \left(1+\sqrt{5}\right)} \\
 \frac{1}{\sqrt{2}} & \frac{1}{2} \sqrt{\frac{1}{2} \left(1+\sqrt{5}\right)} & \frac{1}{4} i \left(-1+\sqrt{5}\right) \\
\end{array}
\right)\: \: \textrm{for} $

$F^{663}_{4}  $

\item $\left(
\begin{array}{ccc}
 -\frac{1}{2} i \left(-1+\sqrt{5}\right) & -i \sqrt{\frac{1}{2} \left(-2+\sqrt{5}\right)} & \frac{1}{\sqrt{2}} \\
 -i \sqrt{\frac{1}{2} \left(-2+\sqrt{5}\right)} & \frac{1}{4} i \left(-5+\sqrt{5}\right) & -\frac{1}{2} \sqrt{\frac{1}{2} \left(1+\sqrt{5}\right)} \\
 \frac{i}{\sqrt{2}} & -\frac{1}{2} i \sqrt{\frac{1}{2} \left(1+\sqrt{5}\right)} & \frac{1}{4} \left(-1+\sqrt{5}\right) \\
\end{array}
\right)\: \: \textrm{for} $

$F^{664}_{2}  $

\item $\left(
\begin{array}{ccc}
 \frac{1}{2} i \left(-1+\sqrt{5}\right) & -i \sqrt{\frac{1}{2} \left(-2+\sqrt{5}\right)} & -\frac{1}{\sqrt{2}} \\
 i \sqrt{\frac{1}{2} \left(-2+\sqrt{5}\right)} & \frac{1}{4} i \left(-5+\sqrt{5}\right) & \frac{1}{2} \sqrt{\frac{1}{2} \left(1+\sqrt{5}\right)} \\
 \frac{i}{\sqrt{2}} & \frac{1}{2} i \sqrt{\frac{1}{2} \left(1+\sqrt{5}\right)} & \frac{1}{4} \left(-1+\sqrt{5}\right) \\
\end{array}
\right)\: \: \textrm{for} $

$F^{664}_{3}  $

\item $\left(
\begin{array}{ccc}
 i \sqrt{\frac{1}{10} \left(5-\sqrt{5}\right)} & \frac{i}{\sqrt{2} \sqrt[4]{5}} & \frac{i}{\sqrt{2}} \\
 -i \sqrt{\frac{1}{10} \left(5-\sqrt{5}\right)} & -\frac{i}{\sqrt{2} \sqrt[4]{5}} & \frac{i}{\sqrt{2}} \\
 -\frac{i}{\sqrt[4]{5}} & i \sqrt{1-\frac{1}{\sqrt{5}}} & 0 \\
\end{array}
\right)\: \: \textrm{for} $

$F^{665}_{5}  $

\item $\left(
\begin{array}{cccc}
 \frac{1}{2} \left(3-\sqrt{5}\right) & -\sqrt{-2+\sqrt{5}} & \sqrt{-2+\sqrt{5}} & \frac{1}{2} \left(1-\sqrt{5}\right) \\
 \sqrt{-2+\sqrt{5}} & \frac{1}{2} \left(3-\sqrt{5}\right) & \frac{1}{2} \left(-1+\sqrt{5}\right) & \sqrt{-2+\sqrt{5}} \\
 -\sqrt{-2+\sqrt{5}} & \frac{1}{2} \left(-1+\sqrt{5}\right) & \frac{1}{2} \left(3-\sqrt{5}\right) & -\sqrt{-2+\sqrt{5}} \\
 \frac{1}{2} \left(1-\sqrt{5}\right) & -\sqrt{-2+\sqrt{5}} & \sqrt{-2+\sqrt{5}} & \frac{1}{2} \left(3-\sqrt{5}\right) \\
\end{array}
\right)\: \: \textrm{for} $

$F^{444}_{4}  $

{\tiny{
\item $\left(
\begin{array}{cccc}
 a & \bar{a} & \frac{1}{2} i \sqrt{-1+\sqrt{5}} & -\frac{1}{2} \sqrt{1-\frac{1}{\sqrt{5}}} \\
 \bar{a} & a & -\frac{1}{2} i \sqrt{-1+\sqrt{5}} & -\frac{1}{2} \sqrt{1-\frac{1}{\sqrt{5}}} \\
 -\frac{1}{2} i \sqrt{-1+\sqrt{5}} & \frac{1}{2} i \sqrt{-1+\sqrt{5}} & \frac{1}{2} i \left(-1+\sqrt{5}\right) & 0 \\
 \frac{1}{2} i \sqrt{1-\frac{1}{\sqrt{5}}} & \frac{1}{2} i \sqrt{1-\frac{1}{\sqrt{5}}} & 0 & i \sqrt{\frac{1}{10} \left(5+\sqrt{5}\right)} \\
\end{array}
\right)\: \: \textrm{for} $

$F^{566}_{6}  $

\item $\left(
\begin{array}{cccc}
 \bar{a} & a & -\frac{1}{2} i \sqrt{-1+\sqrt{5}} & \frac{1}{2} \sqrt{1-\frac{1}{\sqrt{5}}} \\
 a & \bar{a} & \frac{1}{2} i \sqrt{-1+\sqrt{5}} & \frac{1}{2} \sqrt{1-\frac{1}{\sqrt{5}}} \\
 \frac{1}{2} i \sqrt{-1+\sqrt{5}} & -\frac{1}{2} i \sqrt{-1+\sqrt{5}} & -\frac{1}{2} i \left(-1+\sqrt{5}\right) & 0 \\
 -\frac{1}{2} \sqrt{1-\frac{1}{\sqrt{5}}} & -\frac{1}{2} \sqrt{1-\frac{1}{\sqrt{5}}} & 0 & \sqrt{\frac{1}{10} \left(5+\sqrt{5}\right)} \\
\end{array}
\right)\: \: \textrm{for} $

$F^{656}_{6}  $

\item $\left(
\begin{array}{cccc}
 a & \bar{a} & \frac{1}{2} i \sqrt{-1+\sqrt{5}} & -\frac{1}{2} i \sqrt{1-\frac{1}{\sqrt{5}}} \\
 \bar{a} & a & -\frac{1}{2} i \sqrt{-1+\sqrt{5}} & -\frac{1}{2} i \sqrt{1-\frac{1}{\sqrt{5}}} \\
 -\frac{1}{2} i \sqrt{-1+\sqrt{5}} & \frac{1}{2} i \sqrt{-1+\sqrt{5}} & \frac{1}{2} i \left(-1+\sqrt{5}\right) & 0 \\
 \frac{1}{2} \sqrt{1-\frac{1}{\sqrt{5}}} & \frac{1}{2} \sqrt{1-\frac{1}{\sqrt{5}}} & 0 & i \sqrt{\frac{1}{10} \left(5+\sqrt{5}\right)} \\
\end{array}
\right)\: \: \textrm{for} $

$F^{665}_{6}  $

\item $\left(
\begin{array}{cccc}
 -\bar{a} & -a & \frac{1}{2} i \sqrt{-1+\sqrt{5}} & -\frac{1}{2} i \sqrt{1-\frac{1}{\sqrt{5}}} \\
 -a & -\bar{a} & -\frac{1}{2} i \sqrt{-1+\sqrt{5}} & -\frac{1}{2} i \sqrt{1-\frac{1}{\sqrt{5}}} \\
 -\frac{1}{2} i \sqrt{-1+\sqrt{5}} & \frac{1}{2} i \sqrt{-1+\sqrt{5}} & \frac{1}{2} i \left(-1+\sqrt{5}\right) & 0 \\
 \frac{1}{2} i \sqrt{1-\frac{1}{\sqrt{5}}} & \frac{1}{2} i \sqrt{1-\frac{1}{\sqrt{5}}} & 0 & \sqrt{\frac{1}{10} \left(5+\sqrt{5}\right)} \\
\end{array}
\right)\: \: \textrm{for} $

$F^{666}_{5}  $

\item $\left(
\begin{array}{ccccc}
 \sqrt{-2+\sqrt{5}} & \frac{1}{2} \left(3-\sqrt{5}\right) & \frac{1}{2} \left(-3+\sqrt{5}\right) & -\sqrt{2 \left(-2+\sqrt{5}\right)} & 0 \\
 \frac{1}{2} \left(1-\sqrt{5}\right) & \frac{1}{2} \sqrt{5 \left(-2+\sqrt{5}\right)} & -\frac{1}{2} \sqrt{5 \left(-2+\sqrt{5}\right)} & \frac{1}{\sqrt{18+8 \sqrt{5}}} & 0 \\
 0 & \frac{1}{2} & \frac{1}{2} & 0 & -\frac{1}{\sqrt{2}} \\
 0 & -\frac{1}{2} & -\frac{1}{2} & 0 & -\frac{1}{\sqrt{2}} \\
 \frac{1}{2} \left(-1+\sqrt{5}\right) & \frac{1}{2} \sqrt{-2+\sqrt{5}} & -\frac{1}{2} \sqrt{-2+\sqrt{5}} & \frac{1}{\sqrt{2}} & 0 \\
\end{array}
\right)\: \: \textrm{for} $

$F^{446}_{6}  $

\item $\left(
\begin{array}{ccccc}
 2-\sqrt{5} & -\sqrt{\frac{1}{2} \left(-15+7 \sqrt{5}\right)} & 0 & 0 & i \sqrt{\frac{1}{2} \left(-1+\sqrt{5}\right)} \\
 \sqrt{\frac{1}{2} \left(-15+7 \sqrt{5}\right)} & -\frac{3}{4} \left(-3+\sqrt{5}\right) & 0 & 0 & \frac{1}{2} i \sqrt{\frac{1}{2} \left(5-\sqrt{5}\right)} \\
 0 & 0 & \frac{1}{2} i \sqrt{\frac{1}{2} \left(5-\sqrt{5}\right)} & \frac{1}{4} \left(-1-\sqrt{5}\right) & 0 \\
 0 & 0 & \frac{1}{4} \left(-1-\sqrt{5}\right) & \frac{1}{2} i \sqrt{\frac{1}{2} \left(5-\sqrt{5}\right)} & 0 \\
 -i \sqrt{\frac{1}{2} \left(-1+\sqrt{5}\right)} & \frac{1}{2} i \sqrt{\frac{1}{2} \left(5-\sqrt{5}\right)} & 0 & 0 & \frac{1}{4} \left(-3+\sqrt{5}\right) \\
\end{array}
\right)\: \: \textrm{for} $

$F^{464}_{6}  $

\item $\left(
\begin{array}{ccccc}
 \sqrt{-2+\sqrt{5}} & \frac{1}{2} \left(-1+\sqrt{5}\right) & 0 & 0 & -\frac{1}{2} i \left(-1+\sqrt{5}\right) \\
 \sqrt{\frac{1}{2} \left(7-3 \sqrt{5}\right)} & -\frac{1}{2} \sqrt{5 \left(-2+\sqrt{5}\right)} & -\frac{i}{2} & \frac{1}{2} & -\frac{1}{2} i \sqrt{-2+\sqrt{5}} \\
 -\sqrt{\frac{1}{2} \left(7-3 \sqrt{5}\right)} & \frac{1}{2} \sqrt{5 \left(-2+\sqrt{5}\right)} & -\frac{i}{2} & \frac{1}{2} & \frac{1}{2} i \sqrt{-2+\sqrt{5}} \\
 \sqrt{2 \left(-2+\sqrt{5}\right)} & \frac{1}{\sqrt{18+8 \sqrt{5}}} & 0 & 0 & \frac{i}{\sqrt{2}} \\
 0 & 0 & -\frac{1}{\sqrt{2}} & \frac{i}{\sqrt{2}} & 0 \\
\end{array}
\right)\: \: \textrm{for} $

$F^{466}_{4}  $

\item $\left(
\begin{array}{ccccc}
 \sqrt{-2+\sqrt{5}} & \frac{1}{2} \left(-1+\sqrt{5}\right) & 0 & 0 & \frac{1}{2} \left(1-\sqrt{5}\right) \\
 -\frac{1}{2} i \left(-3+\sqrt{5}\right) & -\frac{1}{2} i \sqrt{5 \left(-2+\sqrt{5}\right)} & -\frac{i}{2} & \frac{i}{2} & -\frac{1}{2} i \sqrt{-2+\sqrt{5}} \\
 \frac{1}{2} i \left(-3+\sqrt{5}\right) & \frac{1}{2} i \sqrt{5 \left(-2+\sqrt{5}\right)} & -\frac{i}{2} & \frac{i}{2} & \frac{1}{2} i \sqrt{-2+\sqrt{5}} \\
 i \sqrt{2 \left(-2+\sqrt{5}\right)} & \frac{i}{\sqrt{18+8 \sqrt{5}}} & 0 & 0 & \frac{i}{\sqrt{2}} \\
 0 & 0 & -\frac{i}{\sqrt{2}} & -\frac{i}{\sqrt{2}} & 0 \\
\end{array}
\right)\: \: \textrm{for} $

$F^{644}_{6}  $

\item $\left(
\begin{array}{ccccc}
 -i \left(-2+\sqrt{5}\right) & -i \sqrt{\frac{1}{2} \left(-15+7 \sqrt{5}\right)} & 0 & 0 & i \sqrt{\frac{1}{2} \left(-1+\sqrt{5}\right)} \\
 i \sqrt{\frac{1}{2} \left(-15+7 \sqrt{5}\right)} & -\frac{3}{4} i \left(-3+\sqrt{5}\right) & 0 & 0 & \frac{1}{2} i \sqrt{\frac{1}{2} \left(5-\sqrt{5}\right)} \\
 0 & 0 & -\frac{1}{4} i \left(1+\sqrt{5}\right) & -\frac{1}{2} i \sqrt{\frac{1}{2} \left(5-\sqrt{5}\right)} & 0 \\
 0 & 0 & -\frac{1}{2} i \sqrt{\frac{1}{2} \left(5-\sqrt{5}\right)} & \frac{1}{4} i \left(1+\sqrt{5}\right) & 0 \\
 -i \sqrt{\frac{1}{2} \left(-1+\sqrt{5}\right)} & \frac{1}{2} i \sqrt{\frac{1}{2} \left(5-\sqrt{5}\right)} & 0 & 0 & -\frac{1}{4} i \left(-3+\sqrt{5}\right) \\
\end{array}
\right)\: \: \textrm{for} $

$F^{646}_{4}  $

\item $\left(
\begin{array}{ccccc}
 i \sqrt{-2+\sqrt{5}} & -i \sqrt{\frac{1}{2} \left(7-3 \sqrt{5}\right)} & i \sqrt{\frac{1}{2} \left(7-3 \sqrt{5}\right)} & i \sqrt{2 \left(-2+\sqrt{5}\right)} & 0 \\
 -\frac{1}{2} i \left(-1+\sqrt{5}\right) & -\frac{1}{2} i \sqrt{5 \left(-2+\sqrt{5}\right)} & \frac{1}{2} i \sqrt{5 \left(-2+\sqrt{5}\right)} & -\frac{i}{\sqrt{18+8 \sqrt{5}}} & 0 \\
 0 & \frac{1}{2} & \frac{1}{2} & 0 & \frac{i}{\sqrt{2}} \\
 0 & \frac{i}{2} & \frac{i}{2} & 0 & \frac{1}{\sqrt{2}} \\
 \frac{1}{2} \left(1-\sqrt{5}\right) & \frac{1}{2} \sqrt{-2+\sqrt{5}} & -\frac{1}{2} \sqrt{-2+\sqrt{5}} & \frac{1}{\sqrt{2}} & 0 \\
\end{array}
\right)\: \: \textrm{for} $

$F^{664}_{4}  $

\item $\left(
\begin{array}{ccccccc}
 -\sqrt{1-\frac{2}{\sqrt{5}}} & -\sqrt{\frac{1}{10} \left(-5+3 \sqrt{5}\right)} & \sqrt{\frac{1}{10} \left(-5+3 \sqrt{5}\right)} & -\sqrt{\frac{1}{10} \left(5-\sqrt{5}\right)} & 0 & 0 & i \sqrt{\frac{1}{10} \left(5-\sqrt{5}\right)} \\
 -i \sqrt{\frac{1}{10} \left(-5+3 \sqrt{5}\right)} & i \sqrt{1-\frac{2}{\sqrt{5}}} & -i \sqrt{1-\frac{2}{\sqrt{5}}} & \frac{i}{2 \sqrt{20+9 \sqrt{5}}} & -\frac{1}{2} & -\frac{i}{2} & -\frac{1}{2 \sqrt[4]{5}} \\
 i \sqrt{\frac{1}{10} \left(-5+3 \sqrt{5}\right)} & -i \sqrt{1-\frac{2}{\sqrt{5}}} & i \sqrt{1-\frac{2}{\sqrt{5}}} & -\frac{i}{2 \sqrt{20+9 \sqrt{5}}} & -\frac{1}{2} & -\frac{i}{2} & \frac{1}{2 \sqrt[4]{5}} \\
 i \sqrt{\frac{1}{10} \left(5-\sqrt{5}\right)} & -\frac{i}{2 \sqrt{20+9 \sqrt{5}}} & \frac{i}{2 \sqrt{20+9 \sqrt{5}}} & \frac{3}{2} i \sqrt{1-\frac{2}{\sqrt{5}}} & 0 & 0 & -\frac{1}{2} \sqrt{1+\frac{2}{\sqrt{5}}} \\
 0 & \frac{1}{2} & \frac{1}{2} & 0 & \frac{1}{2} & -\frac{i}{2} & 0 \\
 0 & \frac{i}{2} & \frac{i}{2} & 0 & -\frac{i}{2} & -\frac{1}{2} & 0 \\
 \sqrt{\frac{1}{10} \left(5-\sqrt{5}\right)} & \frac{1}{2 \sqrt[4]{5}} & -\frac{1}{2 \sqrt[4]{5}} & -\frac{1}{2} \sqrt{1+\frac{2}{\sqrt{5}}} & 0 & 0 & \frac{1}{2} i \sqrt{1-\frac{2}{\sqrt{5}}} \\
\end{array}
\right)\: \: \textrm{for} $

$F^{666}_{6}  $
}
}
\end{itemize}

\subsection{$R$-matrices}

The $R$-matrices are defined by the following figure, where $R_{c;ij}^{\leftidx{^a}{b}{}a}$ is the $(i,j)$-entry of the matrix $R_{c}^{\leftidx{^a}{b}{}a}$, and $\leftidx{^a}{b}{}$ means the action of $h$ on $b$, where $a \in \C_h$. In the list below, when the dimension of $R_{c}^{ab}$ is $1$, we will just write it as a number like the usual convention.

\setlength{\unitlength}{0.030in}
\begin{picture}(50,60)(0,0)
 \put(10,10){\line(0,-1){10}}
 \put(10,10){\line(1,1){10}}
 \put(10,10){\line(-1,1){10}}
 \put(0,20){\line(1,1){20}}
 \put(20,20){\line(-1,1){8}}
 \put(8,32){\line(-1,1){8}}

 \put(2,20){$a$}
 \put(22,20){$b$}
 \put(12,2){$c$}
 \put(2,40){$\leftidx{^a}{b}{}$}
 \put(22,40){$a$}
 \put(5,8){$j$}

 \put(25,20){$ = \sum\limits_{i}$}
 \put(38,20){$R_{c;ij}^{\leftidx{^a}{b}{}a}$}

 \put(70,10){\line(0,-1){10}}
 \put(70,10){\line(1,1){30}}
 \put(70,10){\line(-1,1){30}}

 \put(42,40){$\leftidx{^a}{b}{}$}
 \put(102,40){$a$}
 \put(72,2){$c$}
 \put(65,8){$i$}
\end{picture}

\begin{itemize}
\item $1\: \: \textrm{ for } \: \: R^{11}_{1}, R^{12}_{2}, R^{13}_{3}, R^{14}_{4}, R^{15}_{5}, R^{16}_{6}, R^{21}_{2}, R^{23}_{4}, R^{31}_{3}, R^{32}_{4}, R^{41}_{4}, R^{46}_{5}, R^{51}_{5}, R^{61}_{6}$

\item $e^{\frac{4 i \pi }{5}}\: \: \textrm{for} \: \: R^{22}_{1}, R^{33}_{1}, R^{44}_{4}$

\item $e^{-\frac{3 i \pi }{5}}\: \: \textrm{for} \: \: R^{22}_{2}, R^{33}_{3}$

\item $e^{-\frac{i \pi }{5}}\: \: \textrm{for} \: \: R^{24}_{3}, R^{34}_{2}, R^{42}_{3}, R^{43}_{2}$

\item $e^{\frac{2 i \pi }{5}}\: \: \textrm{for} \: \: R^{24}_{4}, R^{34}_{4}, R^{42}_{4}, R^{43}_{4}$

\item $e^{\frac{i \pi }{5}}\: \: \textrm{for} \: \: R^{25}_{6}, R^{35}_{6}, R^{44}_{2}, R^{44}_{3}, R^{52}_{6}, R^{53}_{6}$

\item $e^{\frac{3 i \pi }{5}}\: \: \textrm{for} \: \: R^{26}_{5}, R^{36}_{5}, R^{54}_{6}, R^{62}_{5}, R^{63}_{5}$

\item $e^{-\frac{i \pi }{10}}\: \: \textrm{for} \: \: R^{26}_{6}, R^{36}_{6}, R^{62}_{6}, R^{63}_{6}$

\item $e^{-\frac{2 i \pi }{5}}\: \: \textrm{for} \: \: R^{44}_{1}, R^{45}_{6}$

\item $e^{\frac{3 i \pi }{10}}\: \: \textrm{for} \: \: R^{45}_{5}$

\item $e^{-\frac{7 i \pi }{10}}\: \: \textrm{for} \: \: R^{54}_{5}$

\item $e^{\frac{17 i \pi }{20}}\: \: \textrm{for} \: \: R^{55}_{1}, R^{66}_{2}, R^{66}_{3}$

\item $e^{\frac{i \pi }{20}}\: \: \textrm{for} \: \: R^{55}_{4}$

\item $e^{-\frac{17 i \pi }{20}}\: \: \textrm{for} \: \: R^{56}_{2}, R^{56}_{3}, R^{65}_{2}, R^{65}_{3}$

\item $e^{-\frac{i \pi }{4}}\: \: \textrm{for} \: \: R^{56}_{4}, R^{65}_{4}$

\item $-1\: \: \textrm{for} \: \: R^{64}_{5}$

\item $e^{\frac{i \pi }{4}}\: \: \textrm{for} \: \: R^{66}_{1}$

\item $\left(
\begin{array}{cc}
 \frac{1}{4} \left(-i-i \sqrt{5}-\sqrt{2 \left(5-\sqrt{5}\right)}\right) & 0 \\
 0 & \frac{1}{4} i \left(1+\sqrt{5}-i \sqrt{2 \left(5-\sqrt{5}\right)}\right) \\
\end{array}
\right)\: \: \textrm{for} \: \: R^{46}_{6}$

\item $\left(
\begin{array}{cc}
 \frac{1}{4} \left(i+i \sqrt{5}+\sqrt{2 \left(5-\sqrt{5}\right)}\right) & 0 \\
 0 & \frac{1}{4} \left(i+i \sqrt{5}+\sqrt{2 \left(5-\sqrt{5}\right)}\right) \\
\end{array}
\right)\: \: \textrm{for} \: \: R^{64}_{6}$

\item $\left(
\begin{array}{cc}
 \left(-\frac{1}{8}-\frac{i}{8}\right) \left(\sqrt{2}+\sqrt{10}+2 i \sqrt{5-\sqrt{5}}\right) & 0 \\
 0 & \left(\frac{1}{8}+\frac{i}{8}\right) \left(\sqrt{2}+\sqrt{10}+2 i \sqrt{5-\sqrt{5}}\right) \\
\end{array}
\right)\: \: \textrm{for} \: \: R^{66}_{4}$
\end{itemize}

\subsection{$U$ symbols and $\eta$ symbols}
The $U$-symbols are defined as follows, where $U_h(a,b;c)_{ij}$ is the $(i,j)$-entry of $U_h(a,b;c)$, and $h$ is a group element. When $h=e$, the $U$-symbols are always identity matrices, so we omit them below in the list. The $\eta$ symbols are defined by the isomorphism:
$$\eta_a(h,k): \: \leftidx{^{(hk)}}{a}{} \longrightarrow \leftidx{^h}{(\leftidx{^k}{a}{})}{}$$

For the category $\C_{\Z_2}^{\times}$, all the $\eta$ symbols are equal to $1$.

\setlength{\unitlength}{0.030in}
\begin{picture}(100,40)(-10,0)
\put(-5,10){$\rho_h($}

 \put(10,10){\line(0,-1){10}}
 \put(10,10){\line(1,1){10}}
 \put(10,10){\line(-1,1){10}}

 \put(2,20){$a$}
 \put(18,20){$b$}
 \put(12,2){$c$}
 \put(5,8){$j$}

 \put(22,10){$)$}

 \put(25,10){$ = \sum\limits_{i}$}
 \put(38,10){$U_h(a,b;c)_{ij}$}

 \put(70,10){\line(0,-1){10}}
 \put(70,10){\line(1,1){10}}
 \put(70,10){\line(-1,1){10}}

 \put(62,20){$\leftidx{^h}{a}{}$}
 \put(82,20){$\leftidx{^h}{b}{}$}
 \put(72,2){$\leftidx{^h}{c}{}$}
 \put(65,8){$i$}
\end{picture}

\begin{itemize}
\item $1\: \: \textrm{for} \: \: \\
 U_g(1,1;1), U_g(1,2;2), U_g(1,3;3), U_g(1,4;4), U_g(1,5;5), \\
 U_g(1,6;6), U_g(2,1;2), U_g(2,2;1), U_g(2,2;2), U_g(2,3;4), \\
 U_g(3,1;3), U_g(3,2;4), U_g(3,3;1), U_g(3,3;3), U_g(4,1;4),\\
 U_g(4,4;1), U_g(4,4;2), U_g(4,4;3), U_g(4,4;4), U_g(4,5;5), \\
 U_g(4,5;6), U_g(4,6;5), U_g(5,1;5), U_g(5,4;5), U_g(5,4;6), \\
 U_g(5,5;1), U_g(5,5;4), U_g(5,6;2), U_g(5,6;3), U_g(5,6;4),\\
 U_g(6,1;6), U_g(6,4;5), U_g(6,5;2), U_g(6,5;3), U_g(6,5;4), \\
 U_g(6,6;1), U_g(6,6;2), U_g(6,6;3)$ \\

\item $-1\: \: \textrm{for} \: \:  \\
U_g(2,4;3), U_g(2,4;4), U_g(2,5;6), U_g(2,6;5), U_g(2,6;6),\\
U_g(3,4;2), U_g(3,4;4), U_g(3,5;6), U_g(3,6;5), U_g(3,6;6), \\
U_g(4,2;3), U_g(4,2;4), U_g(4,3;2), U_g(4,3;4), U_g(5,2;6), \\
U_g(5,3;6), U_g(6,2;5), U_g(6,2;6), U_g(6,3;5), U_g(6,3;6)$\\

\item $\left(
\begin{array}{cc}
 1 & 0 \\
 0 & -1 \\
\end{array}
\right)\: \: \textrm{for} \: \: U_g(4,6;6), U_g(6,4;6), U_g(6,6;4)$
\end{itemize}

\section{Data for $\gauge{(\SO(8)_1)}{\Sym_3}{\Sym_3}$}
\label{sec:dataSO8}

The two theories resulting from gauging $\SO(8)_1$ share some common fusion rules and also have their own fusion rules. 

The common fusion rules of the two theories:

\begin{itemize}
\item $(\extunit,-1) \otimes (\extunit,-1) = \unit $

\item $(\extunit,-1) \otimes a = a $

\item $(\extunit,-1) \otimes (Y,1) = (Y,-1) $

\item $(\extunit,-1) \otimes (Y,-1) = (Y,1) $

\item $(\extunit,-1) \otimes X = X $

\item $(\extunit,-1) \otimes aX = aX $

\item $(\extunit,-1) \otimes aX' = aX' $

\item $(\extunit,-1) \otimes (X^{+},1) = (X^{+},-1) $

\item $(\extunit,-1) \otimes (X^{+},-1) = (X^{+},1) $

\item $(\extunit,-1) \otimes (X^{-},1) = (X^{-},-1) $

\item $(\extunit,-1) \otimes (X^{-},-1) = (X^{-},1) $

\item $a \otimes a = \unit  \oplus (\extunit,-1) \oplus a$

\item $a \otimes (Y,1) = (Y,1)  \oplus (Y,-1)$

\item $a \otimes (Y,-1) = (Y,1)  \oplus (Y,-1)$

\item $a \otimes X = aX  \oplus aX'$

\item $a \otimes aX = X  \oplus aX'$

\item $a \otimes aX' = X  \oplus aX$

\item $a \otimes (X^{+},1) = (X^{+},1)  \oplus (X^{+},-1)$

\item $a \otimes (X^{+},-1) = (X^{+},1)  \oplus (X^{+},-1)$

\item $a \otimes (X^{-},1) = (X^{-},1)  \oplus (X^{-},-1)$

\item $a \otimes (X^{-},-1) = (X^{-},1)  \oplus (X^{-},-1)$

\item $(Y,1) \otimes (Y,1) = \unit  \oplus a \oplus (Y,1) \oplus (Y,-1)$

\item $(Y,1) \otimes (Y,-1) = (\extunit,-1)  \oplus a \oplus (Y,1) \oplus (Y,-1)$

\item $(Y,1) \otimes X = X  \oplus aX \oplus aX'$

\item $(Y,1) \otimes aX = X  \oplus aX \oplus aX'$

\item $(Y,1) \otimes aX' = X  \oplus aX \oplus aX'$

\item $(Y,-1) \otimes (Y,-1) = \unit  \oplus a \oplus (Y,1) \oplus (Y,-1)$

\item $(Y,-1) \otimes X = X  \oplus aX \oplus aX'$

\item $(Y,-1) \otimes aX = X  \oplus aX \oplus aX'$

\item $(Y,-1) \otimes aX' = X  \oplus aX \oplus aX'$

\item $X \otimes X = \unit  \oplus (\extunit,-1) \oplus (Y,1) \oplus (Y,-1) \oplus X \oplus aX'$

\item $X \otimes aX = a  \oplus (Y,1) \oplus (Y,-1) \oplus aX \oplus aX'$

\item $X \otimes aX' = a  \oplus (Y,1) \oplus (Y,-1) \oplus X \oplus aX$

\item $X \otimes (X^{+},1) = (X^{+},1)  \oplus (X^{+},-1) \oplus (X^{-},1) \oplus (X^{-},-1)$

\item $X \otimes (X^{+},-1) = (X^{+},1)  \oplus (X^{+},-1) \oplus (X^{-},1) \oplus (X^{-},-1)$

\item $X \otimes (X^{-},1) = (X^{+},1)  \oplus (X^{+},-1) \oplus (X^{-},1) \oplus (X^{-},-1)$

\item $X \otimes (X^{-},-1) = (X^{+},1)  \oplus (X^{+},-1) \oplus (X^{-},1) \oplus (X^{-},-1)$

\item $aX \otimes aX = \unit  \oplus (\extunit,-1) \oplus (Y,1) \oplus (Y,-1) \oplus X \oplus aX$

\item $aX \otimes aX' = a  \oplus (Y,1) \oplus (Y,-1) \oplus X \oplus aX'$

\item $aX \otimes (X^{+},1) = (X^{+},1)  \oplus (X^{+},-1) \oplus (X^{-},1) \oplus (X^{-},-1)$

\item $aX \otimes (X^{+},-1) = (X^{+},1)  \oplus (X^{+},-1) \oplus (X^{-},1) \oplus (X^{-},-1)$

\item $aX \otimes (X^{-},1) = (X^{+},1)  \oplus (X^{+},-1) \oplus (X^{-},1) \oplus (X^{-},-1)$

\item $aX \otimes (X^{-},-1) = (X^{+},1)  \oplus (X^{+},-1) \oplus (X^{-},1) \oplus (X^{-},-1)$

\item $aX' \otimes aX' = \unit  \oplus (\extunit,-1) \oplus (Y,1) \oplus (Y,-1) \oplus aX \oplus aX'$

\item $aX' \otimes (X^{+},1) = (X^{+},1)  \oplus (X^{+},-1) \oplus (X^{-},1) \oplus (X^{-},-1)$

\item $aX' \otimes (X^{+},-1) = (X^{+},1)  \oplus (X^{+},-1) \oplus (X^{-},1) \oplus (X^{-},-1)$

\item $aX' \otimes (X^{-},1) = (X^{+},1)  \oplus (X^{+},-1) \oplus (X^{-},1) \oplus (X^{-},-1)$

\item $aX' \otimes (X^{-},-1) = (X^{+},1)  \oplus (X^{+},-1) \oplus (X^{-},1) \oplus (X^{-},-1)$

\item $(X^{+},1) \otimes (X^{-},1) = (Y,1)  \oplus (Y,-1) \oplus X \oplus aX \oplus aX'$

\item $(X^{+},1) \otimes (X^{-},-1) = (Y,1)  \oplus (Y,-1) \oplus X \oplus aX \oplus aX'$

\item $(X^{+},-1) \otimes (X^{-},1) = (Y,1)  \oplus (Y,-1) \oplus X \oplus aX \oplus aX'$

\item $(X^{+},-1) \otimes (X^{-},-1) = (Y,1)  \oplus (Y,-1) \oplus X \oplus aX \oplus aX'$
\end{itemize}

Fusion rules for Theory 1:
\begin{itemize}
\item $(Y,1) \otimes (X^{+},1) = (X^{+},1)  \oplus (X^{-},1) \oplus (X^{-},-1)$

\item $(Y,1) \otimes (X^{+},-1) = (X^{+},-1)  \oplus (X^{-},1) \oplus (X^{-},-1)$

\item $(Y,1) \otimes (X^{-},1) = (X^{+},1)  \oplus (X^{+},-1) \oplus (X^{-},-1)$

\item $(Y,1) \otimes (X^{-},-1) = (X^{+},1)  \oplus (X^{+},-1) \oplus (X^{-},1)$

\item $(Y,-1) \otimes (X^{+},1) = (X^{+},-1)  \oplus (X^{-},1) \oplus (X^{-},-1)$

\item $(Y,-1) \otimes (X^{+},-1) = (X^{+},1)  \oplus (X^{-},1) \oplus (X^{-},-1)$

\item $(Y,-1) \otimes (X^{-},1) = (X^{+},1)  \oplus (X^{+},-1) \oplus (X^{-},1)$

\item $(Y,-1) \otimes (X^{-},-1) = (X^{+},1)  \oplus (X^{+},-1) \oplus (X^{-},-1)$

\item $(X^{+},1) \otimes (X^{+},1) = \unit  \oplus a \oplus (Y,1) \oplus X \oplus aX \oplus aX'$

\item $(X^{+},1) \otimes (X^{+},-1) = (\extunit,-1)  \oplus a \oplus (Y,-1) \oplus X \oplus aX \oplus aX'$

\item $(X^{+},-1) \otimes (X^{+},-1) = \unit  \oplus a \oplus (Y,1) \oplus X \oplus aX \oplus aX'$

\item $(X^{-},1) \otimes (X^{-},1) = \unit  \oplus a \oplus (Y,-1) \oplus X \oplus aX \oplus aX'$

\item $(X^{-},1) \otimes (X^{-},-1) = (\extunit,-1)  \oplus a \oplus (Y,1) \oplus X \oplus aX \oplus aX'$

\item $(X^{-},-1) \otimes (X^{-},-1) = \unit  \oplus a \oplus (Y,-1) \oplus X \oplus aX \oplus aX'$
\end{itemize}

Fusion rules for Theory 2:
\begin{itemize}
\item $(Y,1) \otimes (X^{+},1) = (X^{+},-1)  \oplus (X^{-},1) \oplus (X^{-},-1)$

\item $(Y,1) \otimes (X^{+},-1) = (X^{+},1)  \oplus (X^{-},1) \oplus (X^{-},-1)$

\item $(Y,1) \otimes (X^{-},1) = (X^{+},1)  \oplus (X^{+},-1) \oplus (X^{-},1)$

\item $(Y,1) \otimes (X^{-},-1) = (X^{+},1)  \oplus (X^{+},-1) \oplus (X^{-},-1)$

\item $(Y,-1) \otimes (X^{+},1) = (X^{+},1)  \oplus (X^{-},1) \oplus (X^{-},-1)$

\item $(Y,-1) \otimes (X^{+},-1) = (X^{+},-1)  \oplus (X^{-},1) \oplus (X^{-},-1)$

\item $(Y,-1) \otimes (X^{-},1) = (X^{+},1)  \oplus (X^{+},-1) \oplus (X^{-},-1)$

\item $(Y,-1) \otimes (X^{-},-1) = (X^{+},1)  \oplus (X^{+},-1) \oplus (X^{-},1)$

\item $(X^{+},1) \otimes (X^{+},1) = \unit  \oplus a \oplus (Y,-1) \oplus X \oplus aX \oplus aX'$

\item $(X^{+},1) \otimes (X^{+},-1) = (\extunit,-1)  \oplus a \oplus (Y,1) \oplus X \oplus aX \oplus aX'$

\item $(X^{+},-1) \otimes (X^{+},-1) = \unit  \oplus a \oplus (Y,-1) \oplus X \oplus aX \oplus aX'$

\item $(X^{-},1) \otimes (X^{-},1) = \unit  \oplus a \oplus (Y,1) \oplus X \oplus aX \oplus aX'$

\item $(X^{-},1) \otimes (X^{-},-1) = (\extunit,-1)  \oplus a \oplus (Y,-1) \oplus X \oplus aX \oplus aX'$

\item $(X^{-},-1) \otimes (X^{-},-1) = \unit  \oplus a \oplus (Y,1) \oplus X \oplus aX \oplus aX'$
\end{itemize}


%



%
\bibliographystyle{amsalpha}

%
%

\end{document}